\newtheorem{prop}{Proposition}[section]
\newtheorem{lem}[prop]{Lemma}
\newtheorem{cor}[prop]{Corollary}
\newtheorem{thm}[prop]{Theorem}
\theoremstyle{definition}
\newtheorem{defi}[prop]{Definition}
\newtheorem{ex}[prop]{Example}
\def\Z{\mathbb{Z}}
\def\N{\mathbb{N}}
\def\equad{\quad \textrm{ and } \quad}
\def\omb{\cellcolor{black!5}}
\def\H{\mathrm{H}}
\def\supp{\mathsf{supp}}
\def\lcm{\mathrm{lcm}}
\def\B{\mathcal{B}}
\def\F{\mathcal{A}}
\def\G{\mathcal{G}}
\def\Sh{\lfloor h/|S| \rfloor}
\def\wt{\widetilde}
\numberwithin{equation}{section}
\newcommand\con{\mathop{+\mkern-10mu+}}
\begin{document}

\title{On the existence of integer relative Heffter arrays}

\author[F. Morini]{Fiorenza Morini}
\address{Dipartimento di Scienze Matematiche, Fisiche e Informatiche, Universit\`a di Parma,
Parco Area delle Scienze 53/A, 43124 Parma, Italy}
\email{fiorenza.morini@unipr.it}

\author[M.A. Pellegrini]{Marco Antonio Pellegrini}
\address{Dipartimento di Matematica e Fisica, Universit\`a Cattolica del Sacro Cuore, Via Musei 41,
25121 Brescia, Italy}
\email{marcoantonio.pellegrini@unicatt.it}

\begin{abstract}
Let $v=2ms+t$ be a positive integer, where $t$ divides $2ms$, and let
$J$ be the subgroup of order $t$ of the cyclic group $\Z_v$.
An integer Heffter array $\H_t(m,n;s,k)$  over $\Z_v$ relative to $J$ is  an $m\times n$  partially filled
array with elements in $\Z_v$ such that:
(a) each row contains $s$ filled cells and each column contains $k$ filled cells;
(b) for every $x\in \Z_v \setminus J$, either $x$ or $-x$ appears in the array;
(c) the elements in every row and column, viewed as integers in
$\pm\left\{ 1, \ldots, \left\lfloor \frac{v}{2}\right\rfloor \right\}$,  sum to $0$ in $\Z$.

In this paper we study the existence of an integer $\H_t(m,n;s,k)$ when $s$ and $k$ are both even, proving the following 
results.
Suppose that $4\leq s\leq n$ and $4\leq k \leq m$ are such that $ms=nk$.
Let $t$ be a divisor of $2ms$.
(a) If $s,k \equiv 0 \pmod 4$,  there exists an integer $\H_t(m,n;s,k)$.
(b) If $s\equiv 2\pmod 4$ and $k\equiv 0 \pmod 4$, there exists an integer $\H_t(m,n;s,k)$ if and only if 
$m$ is even.
(c) If $s\equiv 0\pmod 4$ and $k\equiv 2 \pmod 4$, then there exists an integer $\H_t(m,n;s,k)$ if and only if 
$n$ is even.
(d) Suppose that $m$ and $n$ are both even. If $s,k\equiv 2 \pmod 4$, then there exists an integer $\H_t(m,n;s,k)$.
\end{abstract}

\keywords{Relative Heffter array; multipartite complete graph; cyclic decomposition}
\subjclass[2010]{05B20; 05B30}

\maketitle

\section{Introduction}\label{sec:Intro}

Relative Heffter arrays are partially filled arrays (p.f. arrays for short)
introduced in \cite{RelH}, generalizing  the original idea of Dan Archdeacon, \cite{A}.
They are defined as follows.

\begin{defi}\label{def:RelativeH}
Let $v=2ms+t$ be a positive integer, where $t$ divides $2ms$, and let $J$ be the subgroup of order $t$
of the cyclic group $\Z_v$.
A $\H_t(m,n; s,k)$ \emph{Heffter array  over $\Z_v$ relative to $J$} is an $m\times n$ p.f.  array
with elements in $\Z_v$ such that:
\begin{itemize}
\item[($\rm{a})$] each row contains $s$ filled cells and each column contains $k$ filled cells;
\item[($\rm{b})$] for every $x\in \Z_{v}\setminus J$, either $x$ or $-x$ appears in the array;
\item[($\rm{c})$] the elements in every row and column sum to $0$.
\end{itemize}
\end{defi}

In the square case (i.e., when $m=n$ and so $s=k$), the array $\H_t(n,n; k,k)$  will be denoted by
$\H_t(n;k)$. Note that when  $t=1$, that is $J$ is the trivial subgroup of $\Z_{2ms+1}$, one retrieves the
classical concept of
Heffter array. `Classical' Heffter arrays have been studied in several papers,
mainly  because they allow to produce biembeddings of orthogonal cyclic cycle decompositions
of the complete graph $K_v$ on  $v$ vertices onto orientable surfaces (see \cite{BCDY,CDYBiem,CMPPHeffter,DM} and
\cite{Dalai}).
Analogously, exploiting their connection with relative difference families (see \cite{RelH} and 
\cite{BP,Tomm,Francescola}),
relative Heffter arrays can be used for constructing pairs of orthogonal cyclic decompositions
(one decomposition consisting of $s$-cycles and the other one consisting of $k$-cycles) of the complete multipartite 
graph $K_{\ell\times t}$ 
with $\ell=\frac{v}{t}$ parts, each of size $t$.
Also, under suitable conditions (in particular, one needs orderings of the cells that satisfy certain properties), 
one can obtain  biembeddings of these pairs of orthogonal cyclic 
decompositions of $K_{\ell \times t}$ onto orientable surfaces (see \cite{RelHBiem}).

A relative Heffter array is called \emph{integer} if Condition ($\rm{c}$) in Definition \ref{def:RelativeH} is
strengthened so that the elements in every row and in every column, viewed as integers in
$\pm\left\{ 1, \ldots, \left\lfloor \frac{v}{2}\right\rfloor \right\}$,
sum to zero in $\Z$.
The \emph{support} of an integer Heffter array $A$, denoted by $\supp(A)$, is defined to be the set of the absolute
values of the elements contained in $A$.
It easily follows that an integer $\H_2(m,n;s,k)$ is nothing but an integer $\H_1(m,n;s,k)$,
since in both cases the support is the set $\{1,2,\ldots,ms\}$.

The existence problem for square Heffter arrays $\H_1(n;k)$ has been considered and solved in a series of
recent papers. In particular, by \cite{ADDY, DW} integer Heffter arrays $\H_1(n;k)$ (and integer $\H_2(n;k)$) exist if 
and only if $n\geq k\geq 3$ and $nk\equiv 0,3\pmod 4$.
Dropping the integer assumption, in \cite{CDDY} it was proved that Heffter arrays $\H_1(n;k)$ exist for all $n\geq 
k\geq 
3$.
Regarding  non-square arrays, only the tight case has been solved: in \cite{ABD} it was proved that
there exists a $\H_1(m,n;n,m)$ for all $m,n\geq 3$, and there exists
an integer $\H_1(m,n;n,m)$ if and only if the additional condition $mn\equiv 0,3 \pmod 4$ holds.

Necessary conditions for the existence of an integer $\H_t(m,n;s,k)$, in addition to the trivial ones
$3\leq s \leq n$, $3\leq k \leq m$ and $ms=nk$, are given by the following result which can be easily deduced
from  \cite[Proposition 3.1]{RelH}, reapplying the original argument  on the columns.

\begin{prop}\label{prop:necc}
Suppose there exists an integer $\H_t(m,n;s,k)$ for some divisor $t$ of $2ms=2nk$.
\begin{itemize}
\item[(1)] If $t$ divides $ms$, then $ms\equiv 0 \pmod 4$ or $ms\equiv -t \equiv \pm 1\pmod 4$.
\item[(2)] If $t=2ms$, then $s$ and $k$ must be both even.
\item[(3)] If $t\neq 2ms$ does not divide $ms$, then $t+2ms\equiv 0 \pmod 8$.
\end{itemize}
\end{prop}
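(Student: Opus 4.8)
The plan is to reduce all three statements to a single parity invariant coming from Condition (c), once the support has been identified. Writing $\ell=v/t$, the subgroup $J$ is precisely the set of multiples of $\ell$ in $\Z_v$, so Condition (b) forces $\supp(A)=\{1,2,\ldots,\lfloor v/2\rfloor\}\setminus\{\ell,2\ell,\ldots\}$; a direct count gives $|\supp(A)|=ms$, each absolute value occupying exactly one cell. The organizing observation is the parity of $\ell=2ms/t+1$: it is odd exactly when $t\mid ms$ (the setting of statement (1)), and even otherwise. Moreover, whenever $\ell$ is even one checks that $t$ is even too, so that $v=t\ell\equiv 0\pmod 4$; I will use this restriction to sharpen the mod-$2$ conclusion in statement (3).

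First I would set up the invariant. Since every entry equals $\pm$ its absolute value and each row sums to $0$, adding the $m$ row-sums gives $\sum_{a\in\supp(A)}\varepsilon_a a=0$ for suitable signs $\varepsilon_a\in\{\pm1\}$; reducing mod $2$ yields $\sum_{a\in\supp(A)}a\equiv 0\pmod 2$, i.e. the support contains an even number of odd integers. I would then evaluate this count in each case. With $N=\lfloor v/2\rfloor$ and $r=\lfloor N/\ell\rfloor$ (the number of deleted multiples of $\ell$), the parity of $\sum_{a}a$ equals that of $\binom{N+1}{2}-\ell\binom{r+1}{2}$. In statement (1), where $\ell$ is odd, a short computation in terms of $ms$ and $t$ modulo $4$ shows the count is even precisely when $ms\equiv 0\pmod 4$, or $t$ is odd and $ms\equiv -t\pmod 4$ (so $-t\equiv\pm1$); this is statement (1). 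In statement (3), where $\ell$ and $t$ are both even, the deleted multiples are all even, so the parity is that of $\binom{N+1}{2}$, which vanishes iff $v\equiv 0$ or $6\pmod 8$; since $4\mid v$ excludes $v\equiv 6$, the condition collapses to $2ms+t\equiv 0\pmod 8$, which is statement (3).

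The remaining case is where the real work lies, and it is exactly the point flagged by ``reapplying the original argument on the columns.'' When $t=2ms$ we have $\ell=2$, so $\supp(A)=\{1,3,\ldots,2ms-1\}$ consists entirely of odd integers, and the global parity invariant only yields ``$ms$ even,'' which is too weak. The remedy is to localize the zero-sum argument: each row holds $s$ odd entries summing to $0$, forcing $s$ even, and---reapplying verbatim to the columns---each column holds $k$ odd entries summing to $0$, forcing $k$ even; together these give statement (2). I expect this localization, together with the observation that it must be applied to rows \emph{and} columns, to be the main conceptual step. Statements (1) and (3) are then routine congruence bookkeeping, the only subtlety being the use of $4\mid v$ to upgrade the mod-$2$ conclusion to the mod-$8$ conclusion in statement (3).
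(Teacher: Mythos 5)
Your plan is correct and is essentially the argument the paper relies on: the paper gives no self-contained proof but defers to \cite[Proposition 3.1]{RelH}, whose proof is exactly this parity-of-the-support-sum computation, with the $t=2ms$ case handled by the row-local (and, as the paper says, column-local) observation that $s$ (resp.\ $k$) odd entries from $\{1,3,\ldots,2ms-1\}$ cannot sum to zero unless $s$ (resp.\ $k$) is even. Your congruence bookkeeping in cases (1) and (3) checks out, so there is nothing to add.
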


The previous results on integer $\H_1(n;k)$ state that for $t=1,2$ these necessary conditions are actually also
sufficient. The same holds also for $t=k\neq 5$, as proved in \cite{RelH} (the existence of an integer $\H_5(n;5)$ is 
still an open problem for $n\equiv 0\pmod 4$).
However, in the same paper the authors showed that there is no integer $\H_{3n}(n;3)$ and no integer $\H_8(4;3)$, even
if conditions of Proposition \ref{prop:necc} hold.
The existence of an integer $\H_{n}(n;3)$ and of an integer $\H_{2n}(n;3)$ was proved  in  \cite{RelHBiem} for all odd 
$n\geq 3$.
No other case has been studied so far: this leaves the existence problem of an integer $\H_t(m,n;s,k)$ widely 
open.

In this paper we consider the case when $s$ and $k$ are both even.
In particular, we show that the previous necessary conditions are also sufficient when $s$ and $k$ are even and 
$ms\equiv 0 \pmod 4$. 
In other words, we prove the following result.

\begin{thm}\label{main}
Let $m,n,s,k$ be  integers such that $4\leq s\leq n$, $4\leq k \leq m$ and $ms=nk$.
Let $t$ be a divisor of $2ms$. 
\begin{itemize}
\item[(1)] If $s,k \equiv 0 \pmod 4$, then there exists an integer $\H_t(m,n;s,k)$.
\item[(2)] If $s\equiv 2\pmod 4$ and $k\equiv 0 \pmod 4$, then there exists an integer $\H_t(m,n;s,k)$ if and only if 
$m$ is even.
\item[(3)] If $s\equiv 0\pmod 4$ and $k\equiv 2 \pmod 4$, then there exists an integer $\H_t(m,n;s,k)$ if and only if 
$n$ is even.
\item[(4)] Suppose that $m$ and $n$ are both even. If $s,k\equiv 2 \pmod 4$, then there exists an integer 
$\H_t(m,n;s,k)$.
\end{itemize}
\end{thm}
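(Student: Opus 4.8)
The plan is to reduce the four statements to a single construction problem and to dispatch the ``only if'' parts by an arithmetic observation. First, transposing an integer $\H_t(m,n;s,k)$ yields an integer $\H_t(n,m;k,s)$: conditions (a)--(c) simply swap rows and columns while $v$, $J$ and the support are unchanged, so part (3) is part (2) with $(m,s)\leftrightarrow(n,k)$, and it suffices to establish (1), (2), (4). Next, in every case the hypotheses force $ms\equiv 0\pmod 4$: when $4\mid s$ (cases (1),(3)) this is immediate, and when $s\equiv 2\pmod 4$ with $4\mid k$ (case (2)) the relation $ms=nk$ gives $v_2(m)+1=v_2(n)+v_2(k)\ge v_2(n)+2$, whence $v_2(m)\ge 1$, i.e. $m$ is even; dually $n$ is even in (3), and in (4) we assume $m,n$ even so that $v_2(ms)=v_2(m)+1\ge 2$. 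Thus the parity requirements in (2),(3) are automatic and the ``only if'' directions need no separate argument. Finally one checks that, with $ms\equiv 0\pmod 4$, the three conditions of Proposition \ref{prop:necc} hold for every divisor $t$ of $2ms$ — e.g. if $t\nmid ms$ then $v_2(t)=v_2(2ms)$, and a direct computation gives $8\mid t+2ms$ — so there is no further obstruction and the entire content of the theorem is the construction of the arrays.

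For the construction I would work with the concrete support $S=\{1,\dots,\lfloor v/2\rfloor\}$ with the multiples of $\ell=v/t$ deleted; one checks that $|S|=ms$, so $S$ must fill the array exactly once up to sign. I would first fix a \skel, that is an $m\times n$ $0/1$ pattern with $s$ ones per row and $k$ ones per column — such a pattern exists since $ms=nk$ — choosing it block-circulant so that the filled cells fall into congruent diagonal bands. The basic zero-sum gadget is a quadruple of support values carrying signs that cancel, the cleanest being four (nearly) consecutive values $x,x{+}1,x{+}2,x{+}3$ with signs $+,-,-,+$, since $x-(x{+}1)-(x{+}2)+(x{+}3)=0$; crucially, adding any constant to all four absolute values preserves cancellation, so the gadget is \emph{shiftable}. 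The aim is to lay these gadgets along the bands of the \skel so that each row and each column is a disjoint union of gadget-parts, and hence sums to $0$.

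In case (1), where $4\mid s$ and $4\mid k$, each row and each column of cells splits into groups of four matched to gadgets, and the $ms$ values of $S$ are poured into the consecutive gadgets in increasing blocks; this is the base construction, assembled from its bands by the concatenation operator $\con$. In the cases with $s\equiv 2$ or $k\equiv 2\pmod 4$, each affected row (resp. column), after removing full groups of four, retains a leftover pair that cannot cancel on its own. Here the even parity of $m$ (resp. $n$) is used to match the defective rows (resp. columns) into pairs and to route a single ``linking'' gadget across each matched pair so that the two leftover cells cancel jointly; in case (4) both matchings run simultaneously, which is exactly why both $m$ and $n$ are required even. The remaining point is the \emph{holes}, the multiples of $\ell$ absent from $S$: because every gadget is shiftable, replacing a forbidden absolute value by the next admissible one merely reindexes which block of values is assigned to which gadget and disturbs no row or column sum, so the construction adapts to the hole pattern of any given $t$.

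The main obstacle I anticipate is precisely this uniformity over all divisors $t$ of $2ms$: distinct $t$ delete differently spaced sets of multiples of $\ell$ from $\{1,\dots,\lfloor v/2\rfloor\}$, and one must certify that a \emph{single} family of gadgets, filled on the \emph{same} \skel, remains zero-sum for \emph{every} admissible $t$ at once. I expect this to hinge on a robust shiftability lemma — that the gadgets may be re-based over any block of admissible values without changing the local sums — together with a bookkeeping argument showing that the leftover pairs of the $\equiv 2\pmod 4$ cases can always be matched under the stated parities of $m$ and $n$. Granting these two facts, the four statements follow by assembling the appropriate gadgets along the bands, and the automatic necessity recorded above completes the proof.
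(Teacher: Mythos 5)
Your reductions are sound: transposition does give (3) from (2), the parity of $m$ (resp.\ $n$) is indeed forced by $ms=nk$ in cases (2) and (3) so the ``only if'' directions are automatic, and the necessary conditions of Proposition \ref{prop:necc} are never violated under the stated hypotheses. The overall architecture (a block-circulant skeleton filled with shiftable zero-sum gadgets, assembled by concatenation) is also the same family of ideas the paper uses. The fatal problem is the final step, where you claim that ``replacing a forbidden absolute value by the next admissible one merely reindexes which block of values is assigned to which gadget and disturbs no row or column sum.'' This is false. Shiftability only allows you to translate \emph{all four} entries of a gadget by the same amount $x$ (adding $x$ to positive entries and $-x$ to negative ones); perturbing a single entry to dodge a hole destroys the zero-sum property. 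Consequently the supports of your gadgets must \emph{tile} the admissible set $\left[1,ms+\lfloor t/2\rfloor\right]\setminus\{\ell,2\ell,\ldots\}$ exactly, and a gadget whose support is a run of four consecutive integers can only do this when the holes are spaced compatibly, essentially when $\ell\equiv 1\pmod 4$. Already for $t=ms$ (so $\ell=3$) the admissible set consists of runs of length $2$ separated by holes, and no translate of $\{x,x+1,x+2,x+3\}$ fits inside it. This is precisely why the paper needs gadgets with \emph{non-consecutive} supports $\{1,a+1,b+1,a+b+1\}$ for several different choices of $(a,b)$ keyed to the $2$-adic valuation of $t$ (Lemmas \ref{k4-3}, \ref{k4-2}, \ref{k4-1}), and, in the $s\equiv 2\pmod 4$ cases, a long list of explicit $2\times 6$, $2\times 8$ and $2\times 10$ blocks whose supports omit a prescribed residue, together with a case analysis on $\ell$ modulo various integers. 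Your ``robust shiftability lemma'' is exactly the statement that does not hold, and it cannot be repaired without reintroducing this case analysis.

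A second, smaller gap: in the cases $s\equiv 2\pmod 4$ or $k\equiv 2\pmod 4$ you propose to pair up ``defective'' rows or columns and route a linking gadget across each pair, but you give no mechanism by which the two leftover cells of a matched pair cancel while the support condition is simultaneously maintained. The paper's solution is structurally different and genuinely nontrivial: it builds $2\times s$ blocks with prescribed alternating column sums (conditions \eqref{blocchi} and \eqref{blocchiOLD}), stacks them into the $2d\times d$ arrays $P(\mathcal{B})$ so that the column sums telescope to zero, and in case (4) glues a square $n\times n$ array onto an $(m-n)\times n$ array built from the tail of the sequence. Without an analogue of these prescribed-column-sum blocks, the cancellation of your leftover pairs and the exactness of the support cannot both be certified, so the proof as proposed does not go through.
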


This result is proved constructively  in Sections \ref{s0k0}, \ref{s2k0} and \ref{s2k2}: 
item (1) follows from Proposition \ref{prop:k4}; 
items (2) and (3) follow from Proposition \ref{s2};
item (4) follows from Proposition \ref{sk2}.
Unfortunately, the case when $m$ and $n$ are both odd and $s,k \equiv 2 \pmod 4$ remains open.
Note that, under these hypotheses, $t$ cannot be a divisor of $ms$.

To conclude, we point out that from Theorem \ref{main}, \cite[Proposition 2.9]{RelH} and  \cite[Theorem 4.1]{CMPPSums}
we obtain the following result concerning
cyclic cycle decompositions of $K_{\frac{2ms+t}{t}\times t}$.

\begin{cor}
Let $m,n,s,k$ be positive  such that $4\leq s\leq n$, $4\leq k \leq m$ and $ms=nk$.
Let $t$ be a divisor of $2ms$.
There exists a pair $(\mathcal{D}_1, \mathcal{D}_2)$ of orthogonal cyclic  decompositions of the graph 
$K_{\frac{2ms+t}{t}\times t}$, where $\mathcal{D}_1$ consists of $s$-cycles and $\mathcal{D}_2$ consists of $k$-cycles, 
in each of the following cases:
\begin{itemize}
 \item[(1)] $(s,k)\in \{(4,4), (4,6), (4,8), (6,4), (6,8),  (8,4), (8,6), (8,8)\}$;
 \item[(2)] $(s,k)=(6,6)$, with $m$ and $n$ both even.
\end{itemize}
\end{cor}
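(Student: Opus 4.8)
The plan is to deduce the corollary directly from the three ingredients named in the statement, the only genuinely new observation being a short parity bookkeeping that shows the hypotheses of the relevant case of Theorem~\ref{main} are automatically satisfied.

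First I would record how the constraint $ms=nk$ interacts with each admissible pair $(s,k)$. For the pairs in which both entries are divisible by $4$, namely $(4,4),(4,8),(8,4),(8,8)$, case~(1) of Theorem~\ref{main} applies with no parity restriction and yields an integer $\H_t(m,n;s,k)$. For the remaining pairs the relation $ms=nk$ forces exactly the parity needed: for $(6,4)$ and $(6,8)$ one has $3m=2n$ and $3m=4n$ respectively, so $m$ is even and case~(2) applies; for $(4,6)$ and $(8,6)$ one has $2m=3n$ and $4m=3n$ respectively, so $n$ is even and case~(3) applies. In each situation Theorem~\ref{main} produces the required array. For the pair $(6,6)$ the relation gives only $m=n$, which is precisely why the extra hypothesis that $m$ and $n$ be even must be imposed; under it, case~(4) of Theorem~\ref{main} supplies an integer $\H_t(m,n;6,6)$.

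Next I would invoke the two cited results to pass from an array to a decomposition. By \cite[Theorem 4.1]{CMPPSums}, since every row has length $s\le 8$ and every column has length $k\le 8$, the entries of each row and of each column can be cyclically ordered so that their partial sums are pairwise distinct; that is, the array obtained above can be made simple. Feeding such a simple integer $\H_t(m,n;s,k)$ into \cite[Proposition 2.9]{RelH} then produces a pair $(\mathcal{D}_1,\mathcal{D}_2)$ of orthogonal cyclic decompositions of $K_{\frac{2ms+t}{t}\times t}$, the rows yielding the $s$-cycles of $\mathcal{D}_1$ and the columns the $k$-cycles of $\mathcal{D}_2$, the orthogonality being inherited from Condition~(b) of Definition~\ref{def:RelativeH}. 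Assembling the two cases completes the argument.

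The one point requiring care --- and the reason the statement is confined to $s,k\in\{4,6,8\}$ --- is the simplicity condition: \cite[Theorem 4.1]{CMPPSums} guarantees the needed orderings only up to bounded length, so outside this range one could still build the arrays from Theorem~\ref{main} but could no longer certify that each row and column closes up into a single cycle of the prescribed length. I expect this to be the only real obstacle; the parity bookkeeping and the invocation of \cite[Proposition 2.9]{RelH} are routine.
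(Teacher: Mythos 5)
Your argument is correct and is essentially the paper's own (the paper offers no written proof, merely asserting that the corollary follows from Theorem \ref{main}, \cite[Proposition 2.9]{RelH} and \cite[Theorem 4.1]{CMPPSums}): you supply the same chain, and your parity bookkeeping via $ms=nk$ correctly verifies that cases (2) and (3) of Theorem \ref{main} apply automatically to the mixed pairs while $(6,6)$ needs the extra evenness hypothesis. Your closing observation that the restriction $s,k\le 8$ comes from the limited range of the partial-sum theorem is likewise consistent with the paper's remark that the result would extend to all even $s,k$ under \cite[Conjecture 3]{CMPPSums}.
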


\noindent This result can be extended to any even $s$ and $k$ assuming the validity of \cite[Conjecture 3]{CMPPSums}.

\section*{Acknowledgments}

The second author was supported by the National Group for Algebraic and Geometric Structures, and their 
Applications (GNSAGA--INdAM).

\section{Notations}

In this paper,  the arithmetic on the row (respectively, on the column) indices is performed modulo $m$
(respectively, modulo $n$), where the set of reduced residues is $\{1,2,\ldots,m\}$ (respectively,
 $\{1,2,\ldots,n\}$), while the entries of the arrays are taken in $\Z$.
Given two integers $a\leq b$, we denote by $[a,b]$ the interval containing the integers $\{a,a+1,\ldots,b\}$.
If $a>b$, then $[a,b]$ is empty. We denote by $(i,j)$ the cell in the $i$-th row and $j$-th column 
of an array $A$. The skeleton of $A$ is the set of its filled positions.

If $A$ is an $m\times n$ p.f. array, for $i\in[1,n]$ we define the $i$-th diagonal as
$$D_i=\{(1,i),(2,i+1),\ldots,(m,i+m-1)\}.$$

\begin{defi}
A p.f. array with entries in $\Z$ is said to be \emph{shiftable} if
every row and every column contains an equal number of positive and negative entries.
\end{defi}

Let $A$ be a shiftable p.f. array and $x$ be a nonnegative integer.
Let $A\pm x$ be the (shiftable) p.f. array obtained adding $x$ to each positive
entry of $A$ and $-x$   to each negative entry of $A$. 
Observe that, since  $A$ is shiftable, the row and column sums of $A\pm x$ are exactly
the row and column sums of $A$.

Given a sequence $S=(B_1,B_2,\ldots)$ of shiftable p.f. arrays and a nonnegative integer $x$, 
we write $S\pm x$ for the sequence $(B_1\pm x, B_2\pm x,\ldots)$.

We denote by $\tau_i(A)$ and $\gamma_j(A)$ the sum of the elements of the $i$-th row and the sum of the 
elements of the $j$-th column, respectively, of  a p.f. array $A$.

If $S_1=(a_1,a_2,\ldots,a_{r})$ and $S_2=(b_1,b_2,\ldots,b_{u})$ are two sequences,
by $S_1 \con S_2$ we mean 
the sequence $(a_1,a_2,\ldots,a_r, b_1, b_2, \ldots, b_u)$ obtained by concatenation
of $S_1$ and $S_2$. In particular, if $S_1$ is the empty sequence then $S_1 \con S_2=S_2$.
Furthermore, given the sequences $S_1,\ldots,S_c$, we write
$\con\limits_{i=1}^c S_i$  for $(\cdots((S_1 \con S_2) \con S_3) \con  \cdots) \con  S_c$.

Finally, we recall that if $A$ is an integer $\H_t(m,n;s,k)$, then
$$
\supp(A)=\left[ 1,ms+\left\lfloor \frac{t}{2} \right\rfloor \right]
\setminus \left\{\ell, 2\ell, \ldots,  \left\lfloor \frac{t}{2} \right\rfloor \ell\right\},
\quad \textrm{ where } \ell=\frac{2ms}{t}+1.$$

\section{The case $s,k\equiv 0 \pmod 4$}\label{s0k0}

In this section we prove the existence of an integer 
$\H_t(m,n;s,k)$ when both $s$ and $k$ are divisible by $4$.
First of all, we set
$$d=\gcd(m,n),\quad m=d\bar m,\quad n= d \bar n,\quad s=4 \bar s \equad k=4\bar k.$$
Note that from $ms=nk$ we obtain that $\bar n$ divides $\bar s$ and $\bar m$ divides $\bar k$. Hence, we can write $\bar 
s= c \bar n$
and $\bar k=c \bar m$.
 
Fix two integers $a,b\geq 2$ and consider the following shiftable p.f. array:
$$B=B_{a,b}=\begin{array}{|c|c|}\hline
     1 & -(a+1) \\\hline
      & \\\hline
   -(b+1) & a+b+1\\\hline
    \end{array}\;.$$
Note that the sequences of the  row/column sums are $(-a,a)$ and $(-b,b)$, respectively.
We  use this $3\times 2$ block for constructing  p.f. arrays whose rows and columns sum to zero.
Start taking an empty $m\times n$ array $A$, fix a set $X$ of
$m\bar n$ nonnegative integers $x_0, x_1,\ldots,x_{m\bar n-1}$, and arrange the blocks
$B\pm x_j$ in such a way that the element $1+x_j$ fills the cell
$(j+1,j+1)$ of $A$ (recall that we work modulo $m$ on row indices  and modulo $n$ on column indices).
In this way, we fill the diagonals $D_{im-1}, D_{im}, D_{im+1}, D_{im+2}$ with $i\in [1,\bar n]$.
In particular, every row has $4\bar n$ filled cells and every column has $4\bar m$ filled cells.
 
Looking at the rows, the elements belonging to the
diagonals $D_{im+1},D_{im+2}$ sum to $-a$, while the elements belonging to the diagonals $D_{im-1},D_{im}$ sum to $a$.
Looking at the columns, the elements belonging to the diagonals $D_{im+1},D_{im-1}$
sum to $-b$, while the elements belonging to the diagonals $D_{im+2},D_{im}$
sum to $b$. Then $A$ has row/column sums equal to zero.

Applying this process $c$ times (working with the diagonals $D_{im+3},D_{im+4}, D_{im+5},D_{im+6}$, and so on),
we obtain a p.f. array $A$, whose rows have exactly $4\bar n \cdot c=s$ filled cells
and whose columns have exactly $4\bar m \cdot c =k$ filled cells.

\begin{ex}
For $a=2$ and $b=5$, fixing $X=\{0,1, 10, 11, 20, 21, 30, 31, 40, 41, 50, 51 \}$, we can fill the diagonals
$D_1,D_2,D_5, D_6, D_7, D_8, D_{11}, D_{12}$ of the following $6\times 12$ p.f. array, 
where we highlighted the block $B_{2,5}$:

\begin{footnotesize}
$$A=\begin{array}{|c|c|c|c|c|c|c|c|c|c|c|c|}\hline
\omb 1  & \omb -3 &      &       &  -26  &   28 &  31  &  -33 &     &      &  -56 & 58   \\\hline
      59 &        2 &   -4 &       &       &  -27 &   29 &  32  & -34 &      &      & -57    \\\hline
\omb -6 &  \omb 8 &   11 &   -13 &       &      & -36  &  38  & 41  &  -43 &      &     \\\hline
         &       -7 &   9  &    12 &  -14  &      &      & -37  & 39  &  42  & -44  &     \\\hline
         &          &  -16 &   18  &    21 &  -23 &      &      & -46 &  48  &  51  & -53 \\\hline
 -54     &          &      &   -17 &    19 &   22 &  -24 &      &     &  -47 &  49  &  52    \\\hline
  \end{array}\;.$$
\end{footnotesize}

\noindent
Note that $\supp(A)=[1,60]\setminus\{5j: j \in [1, 12]\}$. 
As the reader can verify, $A$ is an integer $\H_{24}(6,12;8,4)$: 
in this case $\ell=\frac{2\cdot 6\cdot 8}{24}+1=5$.
\end{ex}

The three constructions we present in this section are obtained following this procedure,
so they all produce p.f. arrays of size $m\times n$ whose rows and columns sum to zero.
To obtain an integer $\H_t(m,n;s,k)$ with $s,k\equiv 0 \pmod 4$, we only have to determine two integers $a,b\geq 2$
and a set $X=\left\{x_0,x_1,\ldots,x_{ms/4-1}\right\}\subset \N$ such that the p.f. array  constructed
using the blocks $B_{a,b}\pm x_j$ has the right support.
For instance, we can arrange the blocks in such a way that the element $1+x_j$ fills
the cell $(j+1,4q_j+j+1)$, where $q_j$ is the quotient of the division of $j$ by $\lcm(m,n)$.

Throughout this section we always assume that 
$4\leq s \leq n$, $4\leq k \leq m$, $ms=nk$ and $s,k\equiv 0 \pmod 4$.

\begin{lem}\label{k4-3}
There  exists an integer $\H_t(m,n;s,k)$ for any divisor $t$ of $2ms$ such that $t\equiv 0 \pmod 8$.
\end{lem}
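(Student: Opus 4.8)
The plan is to turn the lemma into a tiling problem and solve it with one uniform choice of block. First I would make the target support explicit. Put $\ell=\frac{2ms}{t}+1$ and $L=\ell-1=\frac{2ms}{t}$. Since $t$ is even we have $\lfloor t/2\rfloor=t/2$, and since $\frac{t}{2}\ell=ms+\frac t2$ the required support
$$\supp=\left[1,\,ms+\tfrac t2\right]\setminus\left\{\ell,2\ell,\ldots,\tfrac t2\ell\right\}$$
is the disjoint union of the $r:=t/2$ runs $R_i=\left[(i-1)\ell+1,(i-1)\ell+L\right]$, $i=1,\ldots,r$, each of length $L$. The hypothesis $t\equiv 0\pmod 8$ enters here and only here: it says precisely that $r=t/2\equiv 0\pmod 4$.

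By the reduction explained at the start of the section, it suffices to find $a,b\ge 2$ and base points $y_j=1+x_j$ so that the translates $y_j+\{0,a,b,a+b\}$ — the supports of the shifted blocks $B_{a,b}\pm x_j$ — partition $\supp$; the diagonal placement then automatically produces an $m\times n$ array with $s$ cells per row, $k$ cells per column, and all row and column sums equal to zero, so only the support remains to be arranged.

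I would take $a=\ell$ and $b=2\ell$, i.e.\ the tile $\{0,\ell,2\ell,3\ell\}$ (here $a=\ell\ge 2$ because $t\le 2ms$, and $a\ne b$). Using $r\equiv 0\pmod 4$, partition the runs into the $r/4$ quadruples $\{R_{4g+1},R_{4g+2},R_{4g+3},R_{4g+4}\}$, $g=0,\ldots,\frac r4-1$, and for each residue $\rho\in[1,L]$ take $y=4g\ell+\rho$, giving the tile
$$\left\{\,4g\ell+\rho,\ (4g+1)\ell+\rho,\ (4g+2)\ell+\rho,\ (4g+3)\ell+\rho\,\right\}.$$
Each entry is $\equiv\rho\pmod\ell$ with $1\le\rho\le\ell-1$, so it is not a multiple of $\ell$ and lies in $\supp$; conversely the support element $(i-1)\ell+\rho$ with $i=4g+h$, $h\in\{1,2,3,4\}$, is the $h$-th entry of exactly one such tile. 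Hence the tiles partition $\supp$, their number is $\frac r4\cdot L=\frac t8\cdot\frac{2ms}{t}=\frac{ms}{4}$, and setting $X=\{y-1\}$ supplies the required data.

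The remaining checks are bookkeeping: that $X$ has cardinality $ms/4$ and that the blocks placed by the scheme $1+x_j\mapsto(j+1,4q_j+j+1)$ occupy distinct cells with the stated counts. These are already guaranteed by the construction recalled at the start of the section, independently of $a$, $b$ and the $x_j$, since shifting a block alters its entries but not its skeleton. Thus there is no real analytic obstacle; the only genuine ingredient is the arithmetic remark that $t\equiv 0\pmod 8$ makes the number of runs divisible by $4$, which is exactly what lets a single $3\times 2$ block absorb a full residue class mod $\ell$ across four consecutive runs. The configuration that would be delicate for a naive pairwise tiling — namely small or odd $L$ (e.g.\ $L=2$, where no within-run pairing of difference $\ge 2$ exists) — is handled uniformly by this quadruple grouping.
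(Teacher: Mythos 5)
Your proposal is correct and is essentially the paper's own proof: you choose the same block $B_{\ell,2\ell}$ (tile $\{0,\ell,2\ell,3\ell\}$), and your set of base points $\{4g\ell+\rho : 0\le g<t/8,\ 1\le\rho\le\ell-1\}$ coincides exactly with the paper's $X=\bigcup_{i=0}^{t/8-1}[4i\ell,(4i+1)\ell-2]$ after the shift $y=1+x$. The only difference is presentational — you phrase the support computation as a partition of the $t/2$ runs into quadruples indexed by residues mod $\ell$, while the paper computes the union of supports interval by interval.
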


\begin{proof}
Let  $B=B_{\ell,2\ell}=\begin{array}{|c|c|}\hline
     1 & -(\ell+1) \\\hline
      & \\\hline
   -(2\ell+1) & 3\ell +1\\\hline
    \end{array}\;$, where $\ell=\frac{2ms}{t}+1$.
An integer $\H_t(m,n;s,k)$, say $A$, can be obtained following the construction described before, once we exhibit a 
suitable set $X$
of size $\frac{ms}{4}$, in such a way that $\supp(A)=\left[1,ms+\frac{t}{2}\right]\setminus  \left\{\ell, 2\ell, 
\ldots, \frac{t}{2}\ell\right\}$.

Start considering the set $X_0=[0,\ell-2]$ of size $\ell-1=\frac{2ms}{t}$: it is easy to see that
$\bigcup\limits_{x \in X_0} \supp(B\pm x)=[1,4\ell]\setminus \{\ell, 2\ell, 3\ell, 4\ell \}$.
Similarly, for any $i \in \N$,  if $X_i=[4i\ell, (4i+1)\ell-2 ]$, then
$$\bigcup_{x \in X_i} \supp(B\pm x)=[4i\ell+1, (4i+4)\ell ]
\setminus \{(4i+1)\ell, (4i+2)\ell, (4i+3)\ell, (4i+4)\ell \}.$$
Clearly, $X_{i_1}\cap X_{i_2}=\emptyset$ if $i_1\neq i_2$.
So, take $X=\bigcup\limits_{i=0}^{t/8-1} X_i$: this is a set of size $\frac{t}{8}\cdot \frac{2ms}{t}=\frac{ms}{4}$, as 
required.
Also, the p.f. array $A$ obtained using the blocks $B\pm x$ with $x\in X$ has support
equal to
$$\begin{array}{rcl}
\supp(A) & =& \bigcup\limits_{i=0}^{t/8-1 } \left([4i\ell+1, (4i+4)\ell ] \setminus
 \{(4i+1)\ell, (4i+2)\ell, (4i+3)\ell, (4i+4)\ell \}\right)\\[8pt]
 & = & \left[1,\frac{t}{2}\ell\right]\setminus \left\{\ell, 2\ell, \ldots, \frac{t}{2}\ell\right\}
=\left[1,ms+\frac{t}{2}\right]\setminus  \left\{\ell, 2\ell, \ldots, \frac{t}{2}\ell\right\}.
\end{array}$$
This shows that $A$ is an integer $\H_t(m,n;s,k)$.
\end{proof}

For instance, to construct an integer $\H_{16}(5,10;8,4)$ we can follow the proof of the previous lemma.
In fact, $t=16$ divides $2\cdot 5\cdot 8$; note that $\ell=6$.

\begin{footnotesize}
$$\H_{16}(5,10;8,4)=\begin{array}{|c|c|c|c|c|c|c|c|c|c|}\hline
\omb    1&  \omb  -7&     &   -16&    22&    25&   -31&     &   -40&    46  \\\hline
   47&     2&    -8&     &   -17&    23&    26&   -32&     &   -41 \\\hline
\omb  -13&  \omb  19&     3&    -9&     &   -37&    43&    27&   -33&     \\\hline
    &   -14&    20&     4&   -10&     &   -38&    44&    28&   -34 \\\hline
  -35&     &   -15&    21&     5&   -11&     &   -39&    45&    29 \\\hline
  \end{array}\;.$$
\end{footnotesize}

\begin{lem}\label{k4-2}
There exists an integer $\H_t(m,n;s,k)$ for any divisor $t$ of
$ms$ such that $t\equiv 0 \pmod 4$.
\end{lem}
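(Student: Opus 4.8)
The plan is to reuse the diagonal construction from the beginning of this section, choosing the block and the shifting set $X$ so that the support of the resulting array $A$ equals the prescribed set $\left[1, ms + \frac{t}{2}\right] \setminus \left\{\ell, 2\ell, \ldots, \frac{t}{2}\ell\right\}$, where $\ell = \frac{2ms}{t} + 1$. The decisive difference with Lemma \ref{k4-3} is that here $t$ divides $ms$, so $\frac{2ms}{t} = 2 \cdot \frac{ms}{t}$ is even and hence $\ell$ is \emph{odd}; this parity is precisely what lets us tile the target support two bands at a time instead of four, so that the weaker hypothesis $t \equiv 0 \pmod 4$ suffices in place of $t \equiv 0 \pmod 8$.

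Concretely I would take $a = \frac{\ell-1}{2}$ and $b = \ell$ and work with the block $B = B_{a,b}$, whose support is $\supp(B \pm x) = \left\{1 + x,\ \frac{\ell+1}{2} + x,\ \ell + 1 + x,\ \frac{3\ell+1}{2} + x\right\}$. Write the target support as the disjoint union of the $\frac{t}{2}$ bands $[p\ell + 1, (p+1)\ell - 1]$, $p = 0, \ldots, \frac{t}{2} - 1$, each of size $\ell - 1$, the omitted multiples of $\ell$ being exactly the walls between consecutive bands. The key observation is that for $X_i = \left[2i\ell,\ 2i\ell + \frac{\ell-3}{2}\right]$ (a set of $\frac{\ell-1}{2}$ shifts) the first two entries of each block sweep band $2i$ while the last two sweep band $2i+1$, so that $\bigcup_{x \in X_i} \supp(B \pm x) = [2i\ell + 1, (2i+2)\ell - 1] \setminus \{(2i+1)\ell\}$; that is, $X_i$ fills exactly the two consecutive bands $2i$ and $2i+1$.

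Taking $X = \bigcup_{i=0}^{t/4 - 1} X_i$ (a disjoint union, since $4 \mid t$ makes $\frac{t}{4}$ an integer and $\frac{t}{2}$ even) then gives $\supp(A) = \left[1, \frac{t}{2}\ell\right] \setminus \left\{\ell, 2\ell, \ldots, \frac{t}{2}\ell\right\} = \left[1, ms + \frac{t}{2}\right] \setminus \left\{\ell, 2\ell, \ldots, \frac{t}{2}\ell\right\}$, with $|X| = \frac{t}{4} \cdot \frac{\ell-1}{2} = \frac{ms}{4}$ blocks, as required. Because $B$ is shiftable and its cells are laid on the diagonals exactly as in the general scheme — the cell positions, and hence the per-row/column counts and the vanishing of all row and column sums, depend only on the layout and not on the values $a, b$ — the array $A$ automatically satisfies conditions (a) and (c), while the support computation gives (b). Hence $A$ is an integer $\H_t(m,n;s,k)$.

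The one point that needs separate care, and the main (if minor) obstacle, is the extreme case $\ell = 3$, i.e. $t = ms$, where $a = \frac{\ell-1}{2} = 1$ drops below the standing hypothesis $a \ge 2$. Here I would simply use $B_{1,3}$, with support $\{1,2,4,5\} + x$ and singleton groups $X_i = \{6i\}$: the four entries stay distinct and positive and the diagonal placement is unchanged (only the entry values, not their positions, involve $a$), so the argument goes through verbatim and $X = \{0, 6, \ldots, 6(\frac{t}{4} - 1)\}$ recovers all non-multiples of $3$ up to $\frac{3t}{2} = ms + \frac{t}{2}$. Everything else reduces to the routine band bookkeeping sketched above.
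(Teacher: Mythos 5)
Your proposal is correct and follows the paper's proof in all essentials: the same diagonal construction, the same decomposition of the target support into $\frac{t}{2}$ bands of length $\ell-1$ handled two at a time by $\frac{t}{4}$ shift-sets of size $\frac{\ell-1}{2}$, exploiting precisely that $\ell$ is odd because $t\mid ms$. The only difference is the choice of block --- the paper takes $B_{1,\ell}$ with shifts $X_i=\{2i\ell,2i\ell+2,\ldots,(2i+1)\ell-3\}$ rather than your $B_{(\ell-1)/2,\ell}$ with consecutive shifts --- and since the paper itself uses $a=1$ in this very lemma, the $a\geq 2$ caveat you handle separately for $\ell=3$ is not actually an obstacle in the intended construction.
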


\begin{proof}
Let  $B=B_{1,\ell}=\begin{array}{|c|c|}\hline
     1 & -2 \\\hline
      & \\\hline
   -(\ell+1) & \ell+2\\\hline
    \end{array}\;$ and note that, since $t$ divides $ms$, $\ell=\frac{2ms}{t}+1$ is an odd integer.
We start considering the set $X_0=\{0,2,4,\ldots,\ell-3\}$ of size
$\frac{\ell-1}{2}=\frac{ms}{t}$: it is easy to see that
$\bigcup\limits_{x \in X_0} \supp(B\pm x)=[1,\ell-1]\cup [\ell+1,2\ell-1]=[1,2\ell]\setminus
\{\ell, 2\ell\}$.
Similarly, for any $i \in \N$,  if $X_i=\{2i\ell,2i\ell+2,2i\ell+4,\ldots, (2i+1)\ell-3 \}$, then
$$\bigcup_{x \in X_i} \supp(B\pm x)=[2i\ell+1, 2(i+1)\ell ]\setminus
\{(2i+1)\ell, (2i+2)\ell \}$$
and $X_{i_1}\cap X_{i_2}=\emptyset$ if $i_1\neq i_2$.
So, take $X=\bigcup\limits_{i=0}^{t/4-1} X_i$: this is a set of size
$\frac{t}{4}\cdot \frac{ms}{t}=\frac{ms}{4}$, as required.
Hence, the p.f. array $A$ obtained following our procedure and using the blocks $B\pm x$ with $x\in X$ has support
equal to
$$\begin{array}{rcl}
\supp(A) & =& \bigcup\limits_{i=0}^{t/4-1}
\left([2i\ell+1, 2(i+1)\ell ]\setminus
\{(2i+1)\ell, (2i+2)\ell \} \right)\\[8pt]
 & = & \left[1,\frac{t}{2}\ell\right]\setminus \left\{\ell, 2\ell, \ldots, \frac{t}{2}\ell\right\}
=\left[1,ms+\frac{t}{2}\right]\setminus  \left\{\ell, 2\ell, \ldots, \frac{t}{2}\ell\right\}.
\end{array}$$
It follows that $A$ is an integer $\H_t(m,n;s,k)$.
\end{proof}

Following the proof of the previous lemma, we can construct an integer $\H_{12}(9;8)$.
In fact, $t=12$ divides $9\cdot 8$; note that $\ell=13$.

\begin{footnotesize}
$$\H_{12}(9;8)=\begin{array}{|c|c|c|c|c|c|c|c|c|}\hline
\omb 1 & \omb -2 & -74  & 75  &33  &-34  &  &-42  &43  \\\hline
45 & 3 & -4 & -76& 77& 35& -36& & -44 \\\hline
\omb -14 & \omb 15 & 5  & -6  &-46 & 47 & 37 &-38  &    \\\hline
& -16& 17& 7& -8& -48& 49& 53& -54 \\\hline
 -56& & -18& 19& 9& -10& -50& 51& 55 \\\hline
57& -58& & -20& 21& 11& -12& -66& 67 \\\hline
69& 59& -60& & -22& 23& 27& -28& -68 \\\hline
-70& 71& 61& -62& & -24& 25& 29& -30 \\\hline
-32& -72& 73& 63& -64& & -40& 41& 31 \\\hline
  \end{array}\;.$$
\end{footnotesize}

\begin{lem}\label{k4-1}
There exists an integer $\H_t(m,n;s,k)$ for any divisor $t$ of $\frac{ms}{2}$.
\end{lem}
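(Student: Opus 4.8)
The plan is to reuse the diagonal procedure of this section with the single block $B=B_{1,2}$, whose support is the set $\{1,2,3,4\}$ of four consecutive integers, choosing the shift set $X$ so that the blocks $B\pm x$ tile the prescribed support. I first set $r=\frac{ms}{2t}$, a positive integer since $t$ divides $\frac{ms}{2}$, so that $\ell=\frac{2ms}{t}+1=4r+1\equiv 1\pmod 4$. The one arithmetic fact that drives everything is a parity remark: from $4\mid s$ we get $ms\equiv 0\pmod 4$, and $ms=2rt$, so $rt$ is even; in particular, if $t$ is odd then $r$ is even.

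Next I describe the target support $\supp(A)=\left[1,\,ms+\lfloor t/2\rfloor\right]\setminus\{\ell,2\ell,\ldots,\lfloor t/2\rfloor\,\ell\}$ as a disjoint union of maximal runs of consecutive integers and check that each run has length divisible by $4$. Since the removed points are the multiples $j\ell$ with $1\le j\le \lfloor t/2\rfloor$, spaced $\ell=4r+1$ apart, the runs strictly between consecutive gaps (and below the first one) are the intervals $[(j-1)\ell+1,\,j\ell-1]$, each of length $\ell-1=4r$. The only other run is the final one, $[\lfloor t/2\rfloor\ell+1,\,ms+\lfloor t/2\rfloor]$, whose length equals $ms-4r\lfloor t/2\rfloor$: this is $0$ when $t$ is even and $2r$ when $t$ is odd. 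Thus every run has length a multiple of $4$, the parity remark (which makes $2r\equiv 0\pmod 4$) being exactly what handles the final run in the odd case.

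With this in hand, I form $X$ by cutting each run into consecutive quadruples: a quadruple $\{z+1,z+2,z+3,z+4\}$ is precisely $\supp(B_{1,2}\pm z)$, so the set $X$ of all such left endpoints $z$ satisfies $\bigcup_{x\in X}\supp(B_{1,2}\pm x)=\supp(A)$, and since $\supp(A)$ has $ms$ elements a count of quadruples gives $|X|=\frac{ms}{4}$. Feeding $B=B_{1,2}$ and this $X$ into the procedure then yields an $m\times n$ p.f. array whose rows and columns sum to zero---this is automatic, as a single shiftable block is used throughout---and whose support is exactly $\supp(A)$; hence it is an integer $\H_t(m,n;s,k)$. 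Using $a=1$ here is legitimate, just as for the block $B_{1,\ell}$ in Lemma \ref{k4-2}.

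The step I expect to be the real obstacle is precisely the divisibility of the run lengths by $4$. The inner runs pose no difficulty, but for odd $t$ the final run has length only $2r$, and the tiling by quadruples would fail were $r$ odd; it is exactly the parity remark $ms=2rt\equiv0\pmod4$, forcing $r$ even when $t$ is odd, that closes this gap. A secondary point to verify carefully is that consecutive runs neither merge nor leave the gaps $j\ell$ uncovered, i.e.\ that the quadruples filling $[(j-1)\ell+1,\,j\ell-1]$ stop exactly at $j\ell-1$; this is immediate from $4r=\ell-1$.
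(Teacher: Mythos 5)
Your proposal is correct and follows essentially the same route as the paper: the paper's proof also uses $B_{1,2}$, tiles each interval $[(j-1)\ell+1,\,j\ell-1]$ (of length $\ell-1$) with quadruples via the sets $X_i=\{i\ell,i\ell+4,\ldots,(i+1)\ell-5\}$, and for odd $t$ handles the final run of length $\frac{\ell-1}{2}$ with an extra set $Y$, justified by the observation $\ell\equiv 1\pmod 8$ — which is exactly your parity remark that $r=\frac{ms}{2t}$ is even when $t$ is odd.
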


\begin{proof}
Let  $B=B_{1,2}=\begin{array}{|c|c|}\hline
     1 & -2 \\\hline
      & \\\hline
   -3 & 4\\\hline
    \end{array}\;$. Note that  $\ell=\frac{2ms}{t}+1\equiv 1 \pmod 4$ since $t$ divides $\frac{ms}{2}$.
We start considering the set $X_0=\{0,4,8,\ldots,\ell-5 \}$ of size
$\frac{\ell-1}{4}=\frac{ms}{2t}$: it is easy to see that
$\bigcup\limits_{x \in X_0} \supp(B\pm x)=[1,\ell]\setminus
\{\ell\}$.
Similarly, for any $i \in \N$,  if $X_i=\{i\ell,i\ell+4,i\ell+8,\ldots, (i+1)\ell-5\}$, then
$$\bigcup_{x \in X_i} \supp(B\pm x)=[i\ell+1, (i+1)\ell ]\setminus
\{(i+1)\ell \}$$
and $X_{i_1}\cap X_{i_2}=\emptyset$ if $i_1\neq i_2$.

If $t$ is even, take $X=\bigcup\limits_{i=0}^{t/2-1} X_i$: this is a set of size
$\frac{t}{2}\cdot \frac{ms}{2t}=\frac{ms}{4}$, as required.
The p.f. array $A$ obtained following our procedure and using the blocks $B\pm x$ with $x\in X$ has support
equal to
$$\begin{array}{rcl}
\supp(A) & =& \bigcup\limits_{i=0}^{t/2-1}
\left( [i\ell+1, (i+1)\ell ]\setminus
\{(i+1)\ell \} \right)\\[8pt]
 & = & \left[1,\frac{t}{2}\ell\right]\setminus \left\{\ell, 2\ell, \ldots, \frac{t}{2}\ell\right\}
=\left[1,ms+\frac{t}{2}\right]\setminus  \left\{\ell, 2\ell, \ldots, \frac{t}{2}\ell\right\}.
\end{array}$$
Suppose now that $t$ is odd. Notice that, in this case, $\ell\equiv 1\pmod 8$. Take
$$Y=\left\{\left(\frac{t-1}{2}\right)\ell, \left(\frac{t-1}{2}\right)\ell+4, \left(\frac{t-1}{2}\right)\ell+8,\ldots,
\left(\frac{t-1}{2}\right)\ell+ \frac{\ell-9}{2} \right\}.$$
Then  $|Y|=\frac{ms}{4t}$ and
$\bigcup\limits_{y \in Y} \supp(B\pm y)=\left[\left(\frac{t-1}{2}\right)\ell+1, 
\left(\frac{t-1}{2}\right)\ell+\frac{\ell-1}{2} \right]$.
Take $X=\left(\bigcup\limits_{i=0}^{(t-3)/2} X_i\right)\cup Y$: this is a set of size
$\frac{t-1}{2}\cdot \frac{ms}{2t}+\frac{ms}{4t}=\frac{ms}{4}$,
as required.
In this case, the p.f. array $A$ obtained following our procedure and using the blocks $B\pm x$ with $x\in X$ has 
support:
$$\begin{array}{rcl}
\supp(A) & =& \bigcup\limits_{i=0}^{\frac{t-3}{2}}
\left( [i\ell+1, (i+1)\ell ]\setminus
\{(i+1)\ell \} \right)\cup \left(\left[\left(\frac{t-1}{2}\right)\ell+1, 
\left(\frac{t-1}{2}\right)\ell+\frac{\ell-1}{2} 
\right] \right)\\[8pt]
 & = & \left(\left[1, \frac{t-1}{2}\ell \right]\setminus \left\{\ell, 2\ell, \ldots, 
\frac{t-1}{2}\ell\right\}\right)\cup
\left[\left(\frac{t-1}{2}\right)\ell+1, ms+\frac{t-1}{2} \right]
\\[8pt]
& =& \left[1,ms+\left\lfloor\frac{t}{2}\right\rfloor\right]\setminus
\left\{\ell, 2\ell, \ldots, \left\lfloor\frac{t}{2}\right\rfloor\ell\right\}.
\end{array}$$

In both cases, we obtain that $A$ is an integer $\H_t(m,n;s,k)$.
\end{proof}

For instance, we can follow the proof of the previous lemma for constructing an integer $\H_{10}(5,10;8,4)$.
In fact, $t=10$ divides $\frac{5\cdot 8}{2}=20$. Note that $\ell= 9$.

\begin{footnotesize}
$$\H_{10}(5,10;8,4)=\begin{array}{|c|c|c|c|c|c|c|c|c|c|}\hline
 \omb   1&  \omb  -2&     &   -16&    17&    23&   -24&    &   -39&    40  \\\hline
     44&     5&    -6&     &   -21&    22&    28&   -29&     &   -43 \\\hline
 \omb   -3&  \omb   4&    10&   -11&    &   -25&    26&    32&   -33&      \\\hline
      &    -7&     8&    14&   -15&     &   -30&    31&    37&   -38 \\\hline
    -42&     &   -12&    13&    19&   -20&     &   -34&    35&    41 \\\hline 
  \end{array}\;.$$
\end{footnotesize}

\begin{prop}\label{prop:k4}
Suppose $4\leq s \leq n$, $4\leq k \leq m$, $ms=nk$ and $s,k\equiv 0 \pmod 4$.
Then, there exists a shiftable integer $\H_t(m,n;s,k)$ for every divisor $t$ of $2ms$.
\end{prop}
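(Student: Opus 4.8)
The plan is to derive the proposition directly from the three preceding lemmas, so that the only genuine work is an elementary bookkeeping on the $2$-adic valuation of $t$, together with the observation that every array produced by the block construction is automatically shiftable.

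First I would record the shiftability. Each translated block $B_{a,b}\pm x$ places, in the two rows and two columns it occupies, exactly one positive and one negative entry: in its top row the entries $1+x$ and $-(a+1+x)$, in its bottom row the entries $-(b+1+x)$ and $a+b+1+x$, and symmetrically in its two columns. Hence each block contributes a balanced sign pattern to every row and column it meets, and since the whole array is a disjoint union of such blocks, every row and every column contains equally many positive and negative entries. As all three Lemmas \ref{k4-1}, \ref{k4-2} and \ref{k4-3} use exactly this construction, each of them in fact produces a \emph{shiftable} integer $\H_t(m,n;s,k)$, so it remains only to check that the three lemmas jointly cover every divisor $t$ of $2ms$.

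Next I would reduce this to a $2$-adic covering argument. Write $2ms=2^a u$ with $u$ odd; since $s\equiv 0\pmod 4$ we have $8\mid 2ms$, hence $a\geq 3$. Any divisor $t$ of $2ms$ can be written uniquely as $t=2^b w$ with $w\mid u$ and $0\leq b\leq a$. The hypotheses of the three lemmas then translate into conditions on $b$: Lemma \ref{k4-1} requires $t\mid \frac{ms}{2}=2^{a-2}u$, that is $b\leq a-2$; Lemma \ref{k4-2} requires $t\mid ms=2^{a-1}u$ and $4\mid t$, that is $2\leq b\leq a-1$; and Lemma \ref{k4-3} requires $8\mid t$, that is $3\leq b\leq a$. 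It therefore suffices to verify the set identity
$$\{0,\dots,a-2\}\cup\{2,\dots,a-1\}\cup\{3,\dots,a\}=\{0,1,\dots,a\}$$
for every $a\geq 3$, and then to invoke the lemma corresponding to the relevant range of $b$.

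I expect the only delicate point to be the smallest case $a=3$ (equivalently $ms\equiv 4\pmod 8$): there the first set is $\{0,1\}$ and the third is $\{3\}$, so the value $b=2$ falls into the gap and is covered precisely by Lemma \ref{k4-2}, whose range $2\leq b\leq a-1$ is nonempty exactly because $a\geq 3$. For every $a\geq 4$ the first and third ranges already overlap at $b=3$ and cover $\{0,\dots,a\}$ on their own. In all cases one of the three lemmas applies to the given $t$ and yields a shiftable integer $\H_t(m,n;s,k)$, which completes the proof.
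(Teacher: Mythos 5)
Your proposal is correct and follows essentially the same route as the paper: the paper's proof is exactly the observation that the three lemmas cover all divisors $t$ of $2ms$ (splitting on $t \bmod 8$ rather than on the $2$-adic valuation $b$, but these are the same case analysis, both resting on $8\mid 2ms$), together with the remark that the block construction is shiftable, which you spell out in more detail. One cosmetic slip: for $a=4$ the ranges $\{0,\dots,a-2\}=\{0,1,2\}$ and $\{3,\dots,a\}$ do \emph{not} overlap at $b=3$, though they still jointly cover $\{0,\dots,a\}$, so your conclusion is unaffected.
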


\begin{proof}
Let $t$ be a divisor of $2ms$.
If $t \equiv 0 \pmod 8$, then we apply Lemma \ref{k4-3}.
If $t \equiv 4 \pmod 8$, then $t$ divides $ms$ and hence we can apply Lemma \ref{k4-2}.
Finally, if $t\not \equiv 0 \pmod 4$, then $t$ divides $\frac{ms}{2}$ and so the existence of an  integer
$\H_t(m,n;s,k)$  follows from Lemma \ref{k4-1}.
Note that in all these three cases, the integer relative Heffter array that we construct is
shiftable.
\end{proof}

\section{The case  $s\equiv 2 \pmod 4$ and $k\equiv 0\pmod 4$}\label{s2k0}

In this section, we will assume $s\geq 6$ and $k\geq 4$ with $s \equiv 2 \pmod 4$ and $k\equiv 0 \pmod 4$.
From $ms=nk$ it follows that $m$ must be even.
To prove the existence of an integer $\H_t(m,n;s,k)$, we start determining the skeleton of such arrays. To this 
purpose, we define a `base unit' that we will fill with the elements of suitable blocks.

Let $\B$ be a sequence of blocks such that the following property is satisfied:
\begin{equation}\label{blocchi2}
\begin{array}{l}
\textrm{fixed } b \textrm{ integers } \sigma_1,\ldots,\sigma_b, \textrm{ the elements of } \B  
\textrm{ are  blocks } B \textrm{ of size } 2\times 2b \\
\textrm{such that } \gamma_{2i-1}(B)=-\gamma_{2i}(B)=\sigma_i 
\textrm{ for all } i \in [1,b].
\end{array}
\end{equation}
For instance, the following block satisfies condition \eqref{blocchi2} with $\sigma_1=-3$, $\sigma_2=-2$ and 
$\sigma_3=1$:
$$\begin{array}{|r|r|r|r|r|r|}\hline
1 & -2 & 6 & -7 & -10 & 12 \\\hline
-4 & 5 & -8 & 9 & 11 & -13\\\hline
  \end{array}.$$
  
So, fix two integers $a$ and $d$ such that $1\leq 2a\leq d$. 
Let $\B=(B_1,\ldots,B_d)$ be a sequence satisfying \eqref{blocchi2}, where the blocks $B_{r}=(b_{i,j}^{(r)})$
are all of size $2\times 2a$.
Let $P=P(\B)$ be the p.f. array of size $2d\times d$ so defined.\label{bloccoP}
For all $i \in [1,a]$ and all $j \in [1,2a]$, 
the cell $(i,  i+j-1)$ of $P$ is filled with the element $b_{1,j}^{(i)}$ and the cell $(d+i,i+j-1)$ is filled with 
the element $b_{2,j}^{(i)}$;
here, the  column indices are taken modulo $d$.
The remaining cells of $P$ are empty. An example of such construction is given in Figure \ref{P}.

\begin{figure}[ht]
\begin{footnotesize}
$$\begin{array}{|c|c|c|c|c|c|}\hline
 b_{1,1}^{(1)} & b_{1,2}^{(1)} & b_{1,3}^{(1)} & b_{1,4}^{(1)} &   &    \\\hline
    &  b_{1,1}^{(2)} & b_{1,2}^{(2)} & b_{1,3}^{(2)} & b_{1,4}^{(2)} &   \\ \hline
    &   & b_{1,1}^{(3)} & b_{1,2}^{(3)} & b_{1,3}^{(3)} & b_{1,4}^{(3)}   \\ \hline
 b_{1,4}^{(4)}   &   &   & b_{1,1}^{(4)} & b_{1,2}^{(4)} & b_{1,3}^{(4)}  \\ \hline
 b_{1,3}^{(5)}  & b_{1,4}^{(5)}  &   &   & b_{1,1}^{(5)} & b_{1,2}^{(5)}  \\ \hline
 b_{1,2}^{(6)} & b_{1,3}^{(6)} & b_{1,4}^{(6)} &  & & b_{1,1}^{(6)}\\\hline
 b_{2,1}^{(1)} & b_{2,2}^{(1)} & b_{2,3}^{(1)} & b_{2,4}^{(1)} &   &    \\\hline
    &  b_{2,1}^{(2)} & b_{2,2}^{(2)} & b_{2,3}^{(2)} & b_{2,4}^{(2)} &   \\ \hline
    &   & b_{2,1}^{(3)} & b_{2,2}^{(3)} & b_{2,3}^{(3)} & b_{2,4}^{(3)}   \\ \hline
 b_{2,4}^{(4)}   &   &   & b_{2,1}^{(4)} & b_{2,2}^{(4)} & b_{2,3}^{(4)}  \\ \hline
 b_{2,3}^{(5)}  & b_{2,4}^{(5)}  &   &   & b_{2,1}^{(5)} & b_{2,2}^{(5)}  \\ \hline
 b_{2,2}^{(6)} & b_{2,3}^{(6)} & b_{2,4}^{(6)} &  & & b_{2,1}^{(6)}\\\hline
  \end{array}$$
  \end{footnotesize}
  \caption{This is a $P(B_1,\ldots,B_6)$, where $B_1,\ldots,B_6$ are arrays of size  $2\times 4$.}\label{P}
\end{figure}

We prove that $P$ is a p.f. array whose columns all sum to zero.
Observe that every row of $P$ contains exactly $2a$ filled cells and every column contains exactly $4a$ elements.
The elements of the  $i$-th column of $P$ are
$$b_{1,1}^{(i)}, b_{1,2}^{(i-1)}, \ldots, b_{1,2a}^{(i+1-2a)},\; 
b_{2,1}^{(i)}, b_{2,2}^{(i-1)}, \ldots, b_{2,2a}^{(i+1-2a)},$$
where the exponents must be read modulo $d$, with residues in $[1,d]$.
Since the sequence $\B$ satisfies \eqref{blocchi2}, we obtain
$$\gamma_i(P)=  \sum_{j=1}^{2a} \gamma_j (B_{i+1-j}) =
\sum_{j=1}^{2a} \gamma_j ( B_{i}) =
\sum_{u=1}^a (\sigma_u-\sigma_u)=0.$$
Furthermore, notice that $\tau_j(P)=\tau_1(B_j)$ and $\tau_{d+j}(P)=\tau_2(B_j)$ for all $j \in [1,d]$.

Our strategy consists of two steps: first of all, we will show that there are suitable sequences of 
blocks satisfying condition \eqref{blocchi2};
then, we will use these sequences to obtain an integer $\H_t(m,n;s;k)$ using p.f. arrays  of type $P(\B)$.

We actually construct sequences $\B$  satisfying this stronger condition:
\begin{equation}\label{blocchi}
\begin{array}{l}
\textrm{fixed } b \textrm{ integers } \sigma_1,\ldots,\sigma_b, \textrm{ the elements of } \B  \textrm{ are  
shiftable blocks } B \textrm{ of size}\\
2\times 2b \textrm{ such that }\tau_1(B)=\tau_2(B)=0 \textrm{ and }
\gamma_{2i-1}(B)=-\gamma_{2i}(B)=\sigma_i \textrm{ for all } i \in [1,b].
\end{array}
\end{equation}
This condition includes the former \eqref{blocchi2} and  will help us to control the row sums.

We start considering the case when $\ell=\frac{2ms}{t}+1$ is even.

\begin{lem}\label{8p}
Let $m$ and $s$ be even integers with $m\geq 2$ and $s\geq 6$
such that there exists an odd prime $p$ dividing $s$.
Let $t$ be a divisor of $2ms$ such that $t\equiv 0 \pmod {8p}$.
There exists a sequence $\B$ of $\frac{m}{2}$ shiftable blocks of size $2\times s$ such  that $\B$ satisfies 
condition \eqref{blocchi} and
$\supp(\B)=[ 1,ms+ t/2 ] \setminus \{j\ell: j \in [1,  t/2 ]\}$.
\end{lem}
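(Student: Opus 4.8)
The plan is to build all $\tfrac m2$ blocks of $\B$ as shifted copies of a single base block (or of a few base blocks sharing the same column sums), exploiting that the operation $B\mapsto B\pm x$ preserves every row and column sum of a shiftable array. Thus, if one shiftable $2\times s$ block $B_0$ satisfies $\tau_1(B_0)=\tau_2(B_0)=0$ and $\gamma_{2i-1}(B_0)=-\gamma_{2i}(B_0)=\sigma_i$ for all $i$, then every $B_0\pm x$ automatically has these same row and column sums, so any family of shifts of $B_0$ satisfies \eqref{blocchi} with the common $\sigma_1,\dots,\sigma_{s/2}$. The problem therefore reduces to (i) producing one base block with zero row sums and a consistent $\pm\sigma_i$ column pattern, and (ii) choosing a shift set $X$ with $|X|=\tfrac m2$ so that $\bigcup_{x\in X}\supp(B_0\pm x)$ equals the prescribed support.

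First I would reformulate the target support. Since $\tfrac t2\ell=ms+\tfrac t2$, the required set is exactly $\{x\in[1,\tfrac t2\ell]:\ell\nmid x\}$, i.e.\ all non-multiples of $\ell$ up to $\tfrac t2\ell$; there are $\tfrac t2(\ell-1)=ms$ of them, matching $\tfrac m2$ blocks of $2s$ cells each. The interval $[1,\tfrac t2\ell]$ splits into $\tfrac t2$ consecutive windows of length $\ell$, and from each window we must keep every entry except the multiple of $\ell$.

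This suggests, exactly as in Lemmas \ref{k4-3}--\ref{k4-1}, a base block whose support avoids $0\pmod\ell$, combined with a two-level arithmetic-progression shift set: inner shifts ranging over $\{0,1,\dots,\ell-2\}$ to sweep the $\ell-1$ nonzero residues inside a window, and outer shifts by multiples of $\ell$ to translate across windows. Since shifting by a multiple of $\ell$ fixes residues modulo $\ell$, no multiple of $\ell$ is ever created, while the inner sweep fills each touched window completely. I would arrange the $2s$ support values of $B_0$ to occupy a fixed pattern of consecutive windows---say one value per window, all $\equiv 1\pmod\ell$---so that one inner sweep fills those windows and the outer shifts tile the rest. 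The hypotheses $8p\mid t$ and $p\mid s$ are used precisely here: they guarantee that the number of windows a base block spans divides $\tfrac t2$ and that the resulting shift count is exactly $\tfrac m2$, so the supports partition the target with no gaps or overlaps.

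The hardest part will be the explicit base block. I need a shiftable $2\times s$ array with $s/2$ \emph{odd} whose two rows each sum to zero, whose columns realize a single $\pm\sigma_i$ pattern, and whose support lands on the intended residues. The odd factor $q=s/2$ (present here because $s\equiv 2\pmod 4$ and $s\ge 6$, whence the odd prime $p\mid s$) is exactly the obstruction to a naive sign pairing, and it is where $p\mid s$ is invoked to assemble each row into sign-balanced groups summing to zero. Once such a $B_0$ and the nested shift set $X$ are fixed, the remaining verification---that $\bigcup_{x\in X}\supp(B_0\pm x)=[1,ms+\tfrac t2]\setminus\{j\ell:j\in[1,\tfrac t2]\}$ and that $|X|=\tfrac m2$---is a direct computation of unions of arithmetic progressions of the same flavour as the earlier lemmas.
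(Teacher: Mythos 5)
Your high-level strategy matches the paper's: a base block whose support consists of one value per length-$\ell$ window, all congruent to $1\bmod\ell$, an inner sweep of shifts over $[0,\ell-2]$ to fill each touched window except its multiple of $\ell$, and outer shifts by multiples of $\ell$ to tile the remaining windows. However, there is a genuine gap in the granularity of your base unit. You propose a single $2\times s$ base block $B_0$ whose $2s$ support values occupy $2s$ consecutive windows; for the outer shifts to tile $[1,\tfrac t2\ell]$ you then need $2s$ to divide $\tfrac t2$, i.e. $4s\mid t$. Writing $s=2ph$, this is $8ph\mid t$, which is strictly stronger than the hypothesis $8p\mid t$ whenever $h>1$. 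Concretely, for $m=2$, $s=12$ ($p=3$, $h=2$), $t=24$ one has $\ell=3$ and only $\tfrac t2=12$ windows in total, so a $2\times 12$ block with $24$ cells cannot place one value per window at all. The paper resolves this by taking the base unit to be a $2\times 2p$ block $V$ (built from one $2\times 6$ block $W_6$ and $(p-3)/2$ copies of a $2\times 4$ block $W_4$, all with entries of the form $\pm(j\ell+1)$), whose $4p$ support values span $4p$ windows; each $2\times s$ block of $\B$ is then the juxtaposition of $h=s/(2p)$ shifted copies $V\pm x$ for consecutive $x$ in the shift set $X=\bigcup_{i=0}^{t/(8p)-1}[4pi\ell,(4pi+1)\ell-2]$ of size $\tfrac{mh}{2}$. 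With this granularity the outer period is $4p\ell$ and the divisibility $4p\mid\tfrac t2$ is exactly the hypothesis $8p\mid t$.

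A second, smaller issue is that you defer the construction of the base block itself, which is the substantive content of the lemma: condition \eqref{blocchi} forces the columns to pair up with opposite sums while both rows sum to zero, and since $2p\equiv 2\pmod 4$ one cannot tile $2p$ columns by $2\times 4$ units alone --- this is precisely why the paper needs the ad hoc $2\times 6$ block $W_6$ alongside $W_4$, and why $p$ odd enters the construction. Your remark that the odd factor of $s$ obstructs a naive sign pairing points in the right direction, but without exhibiting the blocks (or at least the $2\times 2p$ assembly $W_6\con W_4\con\cdots\con W_4$) the argument is not complete.
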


\begin{proof}
Take the blocks of Figure \ref{blW1}.
 Then $W_4$ and $W_6$ satisfy property \eqref{blocchi} with column sums
 $(-2\ell, 2\ell, 2\ell, -2\ell)$ and 
 $(-2\ell, 2\ell, -2\ell,$ $2\ell, \ell,-\ell)$, respectively.
 Furthermore,
 $$\supp(W_4)=\{j\ell+1: j \in [0,7] \} \equad \supp(W_6)=\{j\ell+1: j \in [0,11]\}.$$
 Let $V$ be the following $2\times 2p$ block:
 $$V= \begin{array}{|c|c|c|c|c|}\hline
 W_6  & W_4\pm 12\ell & W_4\pm 20\ell & \cdots & W_4\pm (4p-8)\ell \\\hline
 \end{array}\;.
 $$
 Clearly, also $V$ satisfies \eqref{blocchi} and its support is
 $\supp(V)=\{j\ell+1: j \in [0,4p-1] \}$.
We can use this block $V$ for constructing our sequence $\B$:
the $2\times s$ blocks of $\B$ are obtained simply by juxtaposing $h=\frac{s}{2p}$
blocks of type $V\pm x$, for $x\in X \subset \N$, following the natural order of $(X,\leq)$.  
 So, we are left to exhibit  a suitable set $X$ of size $\frac{mh}{2}$
such that the support of the corresponding sequence $\B$ is 
$[1,ms+t/2]\setminus \{j\ell: j \in [1,t/2]\}$.

 Let first $X_0=[0,\ell-2]$. Then $\supp(V\pm x_{i_1})\cap \supp(V\pm x_{i_2})=\emptyset$ for each $x_{i_1},x_{i_2}\in
X_0$ such that $x_{i_1}\neq x_{i_2}$. Furthermore,
 $$\bigcup_{x \in X_0} \supp(V\pm x)=[1,4p\ell]\setminus \{j\ell: j \in [1,4p] \}.$$
 Similarly, for any $i\in \N$, if  $X_i=[4pi\ell, (4pi+1)\ell-2]$ then
 $$\bigcup_{x \in X_i} \supp(V\pm x)=[1+4pi\ell, 4p\ell+4pi\ell]\setminus \{j\ell: j \in [1+4pi,4p+4pi] \}.$$
 Clearly, $X_{i_1}\cap X_{i_2}=\emptyset$ if $i_1\neq i_2$.
 Therefore, take
 $X=\bigcup\limits_{i=0}^{\frac{t}{8p}-1} X_i$: this is a set of size $\frac{t}{8p}\cdot (\ell-1)=\frac{t}{8p}\cdot
 \frac{4mph}{t}=\frac{mh}{2}$.
 It follows that the sequence $\B$ obtained, as previously described, using the blocks $V\pm x$, with $x\in X$, has 
support equal to
 $$\begin{array}{rcl}
\supp(\B) & =&  \bigcup\limits_{i=0}^{\frac{t}{8p}-1}([1+4pi\ell,4p\ell+4pi\ell]\setminus \{j\ell: j \in [1+4pi,4p+4pi] 
\})\\[8pt]
 & =& \left [1,\frac{t}{2}\ell \right ]\setminus \left \{j\ell: j \in \left [1,\frac{t}{2}\right ] \right \}
 =\left[1,ms+\frac{t}{2}\right]\setminus  \left\{\ell, 2\ell, \ldots, \frac{t}{2}\ell\right\},
 \end{array}$$
 as required.
\end{proof}

\begin{figure}[ht]
\begin{footnotesize}
$$\begin{array}{rcl}
W_4 & =& \begin{array}{|c|c|c|c|}\hline
       1 &   -(\ell+1) &  -(4\ell+1) &    5\ell+1\\\hline
   -(2\ell+1) &  3\ell+1 &    6\ell+1 & -(7\ell+1)\\\hline
\end{array}\;,\\[8pt]
W_6 & = &
\begin{array}{|c|c|c|c|c|c|}\hline
   1 &         -(\ell+1) &   4\ell+1 &      -(5\ell+1) &    -(8\ell+1) &    10\ell+1 \\\hline
 -(2\ell+1) &  3\ell+1 &    -(6\ell+1) &     7\ell+1 &    9\ell+1 &    -(11\ell+1) \\\hline
\end{array}\;.
\end{array}$$
\end{footnotesize}
\caption{Shiftable blocks satisfying condition \eqref{blocchi}.}\label{blW1}
\end{figure}

\begin{ex}
Suppose $m=12$, $s=10$ and $t=80$. Following the proof of  Lemma \ref{8p}, where $p=5$, $h=1$, $\ell=4$
and $X=\{0,1,2\} \cup \{80,81,82 \}$, 
we obtain the following sequence $(B_1,\ldots,B_{6} )$ of shiftable blocks:

\begin{footnotesize}
$$\begin{array}{rcl}
B_1 & =& \begin{array}{|c|c|c|c|c| c|c|c|c|c|}\hline
   1& -5& 17& -21& -33& 41& 49& -53& -65& 69 \\ \hline 
    -9& 13& -25& 29& 37& -45& -57& 61& 73& -77 \\ \hline
    \end{array}\;, \\[8pt]
B_2 & =& \begin{array}{|c|c|c|c|c| c|c|c|c|c|}\hline
 2& -6& 18& -22& -34& 42& 50& -54& -66& 70 \\ \hline  
 -10& 14& -26& 30& 38& -46& -58& 62& 74& -78 \\ \hline 
    \end{array}\;, \\[8pt]
        \end{array}$$
 $$ \begin{array}{rcl}
    B_3 & =& \begin{array}{|c|c|c|c|c| c|c|c|c|c|}\hline
 3& -7& 19& -23& -35& 43& 51& -55& -67& 71 \\ \hline 
 -11& 15& -27& 31& 39& -47& -59& 63& 75& -79 \\ \hline 
    \end{array}\;, \\[8pt]
B_4 & =& \begin{array}{|c|c|c|c|c| c|c|c|c|c|}\hline
 81& -85& 97& -101& -113& 121& 129& -133& -145& 149 \\ \hline 
 -89& 93& -105& 109& 117& -125& -137& 141& 153& -157 \\ \hline 
    \end{array}\;, \\[8pt]
B_5 & =& \begin{array}{|c|c|c|c|c| c|c|c|c|c|}\hline
 82& -86& 98& -102& -114& 122& 130& -134& -146& 150 \\ \hline 
 -90& 94& -106& 110& 118& -126& -138& 142& 154& -158 \\ \hline 
    \end{array}\;, \\[8pt]
B_6 & =& \begin{array}{|c|c|c|c|c| c|c|c|c|c|}\hline
 83& -87& 99& -103& -115& 123& 131& -135& -147& 151 \\ \hline 
 -91& 95& -107& 111& 119& -127& -139& 143& 155& -159 \\ \hline 
  \end{array}\;.
\end{array}$$
\end{footnotesize}
\end{ex}

\begin{lem}\label{non 8p}
Let $m$ and $s$ be even integers with $m\geq 2$ and $s\geq 6$
such that there exists an odd prime $p$ dividing $s$.
Let $t$ be a divisor of $\frac{2ms}{p}$ such that $t\equiv 0 \pmod {8}$
and set $\ell=\frac{2ms}{t}+1$.
There exists a sequence $\B$ of $\frac{m}{2}$ blocks of size $2\times s$ such  that $\B$ satisfies 
condition \eqref{blocchi} and
$\supp(\B)=[ 1,ms+ t/2] \setminus \{j\ell: j \in [1, t/2 ]\}$.
\end{lem}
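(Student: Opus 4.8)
The plan is to mimic the construction of Lemma~\ref{8p}, but to exploit the hypothesis in its complementary form. Lemma~\ref{8p} already covers the subcase $8p\mid t$, so here the relevant situation is $p\nmid t$; together with $t\mid 2ms$ and $p\mid s$ this is equivalent to $t\mid\frac{2ms}{p}$, and it forces $\gcd(t,p)=1$ and $p\mid \ell-1$, where $\ell=\frac{2ms}{t}+1$. The target support is the same as in Lemma~\ref{8p}, namely $[1,ms+t/2]\setminus\{j\ell:j\in[1,t/2]\}$, so I would read it as a grid: identifying the integer $j\ell+c$ with the pair $(j,c)$, the support becomes the set of pairs with $j\in[0,t/2-1]$ (the ``gaps'') and $c\in[1,\ell-1]$ (the ``residues''). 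The shiftable pieces $W_4$ and $W_6$ of Figure~\ref{blW1} already satisfy \eqref{blocchi}, and their supports $\{j\ell+1:j\in[0,7]\}$ and $\{j\ell+1:j\in[0,11]\}$ are, in this picture, horizontal runs of $8$ and $12$ consecutive gaps lying in the single residue $c=1$; applying $\pm x$ translates such a run, moving it to another residue and/or along the gap axis.

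The essential departure from Lemma~\ref{8p} is that, since only $8\mid t$ (and not $8p\mid t$), one cannot bundle the pieces into the width-$2p$ unit $V$ and chunk the gap axis in blocks of $4p$ gaps; such a $V$ would overrun the $t/2$ available gaps. Instead I would use $8\mid t$ directly. First I would tile the gap axis $[0,t/2-1]$ by horizontal segments of lengths $8$ and $12$, i.e.\ choose nonnegative integers $u,w$ with $8u+12w=\tfrac{t}{2}$; this is possible because $4\mid\tfrac t2$ and, for $\tfrac t2\ge 8$, every multiple of $4$ that is at least $8$ has this form. For each segment I would then sweep all $\ell-1$ residues by letting the corresponding piece range over $W_4\pm x$ (resp.\ $W_6\pm x$) with $x=g\ell+(c-1)$, $c\in[1,\ell-1]$, where $g$ is the segment's starting gap; exactly as in Lemma~\ref{8p} the shift set $X$ is then a disjoint union of intervals $[g\ell,g\ell+\ell-2]$, and a direct check gives
\[
\bigcup_{x\in X}\supp(\mathrm{piece}\pm x)=\left[1,\tfrac t2\ell\right]\setminus\left\{j\ell:j\in\left[1,\tfrac t2\right]\right\}=\left[1,ms+\tfrac t2\right]\setminus\left\{j\ell:j\in\left[1,\tfrac t2\right]\right\}.
\]

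It then remains to assemble the resulting $u(\ell-1)$ copies of $W_4$ and $w(\ell-1)$ copies of $W_6$ into a sequence $\B$ of exactly $\frac m2$ blocks of size $2\times s$. Since every piece has zero row sums and has its columns paired with opposite sums, any juxtaposition of whole pieces of total width $s$ gives a $2\times s$ block obeying \eqref{blocchi}; the task is thus purely combinatorial: partition the pieces into $\frac m2$ groups with $4a_i+6c_i=s$. A global count gives $4\,u(\ell-1)+6\,w(\ell-1)=(\ell-1)\tfrac t4=\tfrac{ms}{2}=\tfrac m2\cdot s$, so the totals are consistent, and because $s\equiv2\pmod4$ each group must contain an \emph{odd} number $c_i$ of copies of $W_6$ (the minimal choice being $c_i=1$, $a_i=\frac{s-6}{4}$).

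The hard part will be this final bundling, performed so that the exact support is preserved \emph{and} the parity constraint $\sum_i c_i=w(\ell-1)\equiv\frac m2\pmod 2$ is met; I would use the freedom in choosing $u,w$ (and, if necessary, in tiling distinct residue rows differently) to arrange it. The genuinely exceptional case is $\tfrac t2=4$, i.e.\ $t=8$: then no segment of length $8$ or $12$ fits into a $4$-gap row, the piecewise approach collapses, and one must instead exhibit a bespoke $2\times s$ block whose $2s$ support values occupy all $4$ gaps across $\frac{s}{2}$ residues with the correct column sums. I expect the remaining effort in the lemma to lie in the precise description of $X$ and, above all, in handling this $t=8$ boundary case.
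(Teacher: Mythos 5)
There is a genuine gap, and it is not only in the $t=8$ boundary case you flag: the final ``bundling'' step fails outright for infinitely many admissible parameters. Your tiling of the gap axis $[0,t/2-1]$ by segments of lengths $8$ and $12$ fixes the numbers $u(\ell-1)$ and $w(\ell-1)$ of width-$4$ and width-$6$ pieces, and these numbers are dictated by $t$ alone (indeed $8u+12w=t/2$ forces the parity of $w$, and often forces $w$ itself); they need not be compatible with partitioning the pieces into $\frac m2$ groups of total width $s$. A concrete failure: take $m=4$, $s=6$, $p=3$, $t=16$ (so $t$ divides $\frac{2ms}{p}=16$ and $\ell=4$). Then $t/2=8$ forces $u=1$, $w=0$, so you produce $3$ copies of $W_4$ and no copy of $W_6$; but each of the $\frac m2=2$ required $2\times 6$ blocks would have to be a single width-$6$ piece, of which you have none ($4a_i=6$ has no solution). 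The same obstruction recurs systematically, e.g.\ for every $s=6$ with $t\not\equiv 0\pmod{24}$, and no choice of $u,w$ or per-residue re-tiling can repair it since the parity and size constraints on $w$ are rigid. In short, keeping the ``horizontal'' blocks of Figure \ref{blW1} and re-tiling the gap axis is the wrong move.

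The paper's proof instead changes the blocks themselves: it writes $\ell=py+1$ (your observation that $p\mid \ell-1$ is correct and is exactly what gets used) and introduces new $W_4,W_6$ (Figure \ref{blW2}) whose supports are ``vertical'' in your grid picture --- each occupies only the four gaps $j\in[0,3]$ but two, respectively three, residues of the form $iy+c$. The unit $V=(W_6\mid W_4\pm 2y\mid\cdots\mid W_4\pm(p-3)y)$ then still has width $2p$ exactly as in Lemma \ref{8p}, so the assembly into $2\times s$ blocks is the same automatic juxtaposition of $h=\frac{s}{2p}$ copies and no counting problem arises; meanwhile $\supp(V)=\{iy+1+j\ell: i\in[0,p-1],\, j\in[0,3]\}$ covers only $4$ gaps, so shifting over $X_i=[4i\ell,4i\ell+y-1]$ and chunking the gap axis into $t/8$ blocks of $4$ gaps uses precisely the hypothesis $8\mid t$. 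This also makes the $t=8$ case unexceptional. To salvage your write-up you would need to replace your horizontal pieces by blocks of this vertical type (and verify condition \eqref{blocchi} for them), at which point you would essentially be reproducing the paper's argument.
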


\begin{proof}
By hypothesis we can write $\ell=p y +1$. The blocks $W_4$ and $W_6$ of Figure \ref{blW2} satisfy property 
\eqref{blocchi} with 
column sums $(-y,y,y,-y)$
 and $(-2(py+1), 2(py+1), -2(py+1), 2(py+1), py+1, -(py+1) )$,  respectively.
 Furthermore,
 $$\begin{array}{rcl}
\supp(W_4) & =& \{ (jp+1)y+j+1  , (jp+2)y+j+1: j \in [0,3] \},\\
\supp(W_6) & =& \{ jpy+j+1, (jp+1)y+j+1, (jp+2)y+j+1: j \in [0,3]\}.
   \end{array}$$
Let $V$ be the following $2\times 2p$ block:
 $$V=
 \begin{array}{|c|c|c|c|c|}\hline
 W_6  & W_4\pm 2y & W_4\pm 4y & \cdots & W_4\pm (p-3)y \\\hline
 \end{array}\;.
 $$
 Clearly, also $V$ satisfies \eqref{blocchi} and its support is
 $$\begin{array}{rcl}
\supp(V) & =&  \{iy+1,(p+i)y+2,(2p+i)y+3, (3p+i)y+4 : i \in [0,p-1]\}\\
&= & \{iy+1,\ell+(iy+1),2\ell+(iy+1),3\ell+(iy+1):i\in[0,p-1]\}.
   \end{array}$$
 We can use this block $V$ for constructing our sequence $\B$ as done in Lemma \ref{8p}: 
 it suffices to exhibit  a suitable set $X$ of size 
$\frac{mh}{2}$,  where $h=\frac{s}{2p}$,   such that the support of the corresponding sequence  $\B$ is
$[1,ms+t/2]\setminus \{j \ell : j \in [1,t/2]\}$.

 Let first $X_0=[0,y-1]$. Then $\supp(V\pm x_{i_1})\cap \supp(V\pm x_{i_2})=\emptyset$ for each $x_{i_1},x_{i_2}\in
X_0$ such that $x_{i_1}\neq x_{i_2}$. Furthermore,
 $$\bigcup_{x \in X_0} \supp(V\pm x)= \bigcup\limits_{i=0}^3 [i\ell+1, i\ell+py] =[1,4\ell]\setminus \{\ell, 2\ell, 
3\ell, 4\ell\}.$$
 Similarly, for any $i\in \N$, if  $X_i=[4i\ell , 4i\ell +y -1]$ then
 $$\bigcup_{x \in X_i} \supp(V\pm x)=[1+4i\ell, (4i+4)\ell ]\setminus \{(4i+1)\ell, (4i+2)\ell, (4i+3)\ell, (4i+4)\ell  
\}.$$
 Clearly, $X_{i_1}\cap X_{i_2}=\emptyset$ if $i_1\neq i_2$.
 Therefore, take
 $X=\bigcup\limits_{i=0}^{\frac{t}{8}-1} X_i$: this is a set of size $\frac{t}{8}\cdot y=\frac{t}{8}\cdot \frac{\ell 
-1}{p}=
 \frac{t}{8} \cdot \frac{4mh}{t}=\frac{mh}{2}$.
 It follows that the sequence $\B$ obtained using the blocks $V\pm x$, with $x\in X$, has 
support equal to
 $$\begin{array}{rcl}
\supp(\B) & =&  \bigcup\limits_{i=0}^{\frac{t}{8}-1}([1+4i\ell,4\ell(i+1)]\setminus \{(4i+1)\ell, (4i+2)\ell, 
(4i+3)\ell, 
(4i+4)\ell  \})\\[8pt]
 & =& \left [1,\frac{t}{2}\ell \right ]\setminus \left \{\ell, 2\ell, \ldots, \frac{t}{2}\ell \right \}
 =\left[1,ms+\frac{t}{2}\right]\setminus  \left\{\ell, 2\ell, \ldots, \frac{t}{2}\ell\right\},
 \end{array}$$
 as required.
\end{proof}

 \begin{figure}[ht]
\begin{footnotesize} 
 $$\begin{array}{rcl}
W_4 & =& \begin{array}{|c|c|c|c|}\hline
   y+1 &    -((p+1)y+2) &  -((2p+1)y+3) &  (3p+1)y+4\\\hline
 -(2y+1) &    (p+2)y+2 &    (2p+2)y+3 &  -((3p+2)y+4) \\\hline
\end{array}\;,\\[8pt]
W' & = &
\begin{array}{|c|c|c|c|}\hline
      1 &  -(py+2) &  y+1  & -((p+1)y+2) \\\hline
 -(2py+3) &  3py+4 & -((2p+1)y+3)   &  (3p+1)y+4  \\\hline
\end{array}\;, \\[8pt]
W'' & = &
\begin{array}{|c|c|}\hline
 -(2y+1) & (2p+2)y+3)  \\\hline
   (p+2)y+2 &    -((3p+2)y+4)  \\\hline
\end{array}\;, \\[8pt]
W_6 & = &
\begin{array}{|c|c|}\hline
 W' & W''\\\hline
\end{array}\;.
\end{array}$$
\end{footnotesize}
\caption{Shiftable blocks $W_4, W_6$ satisfying condition \eqref{blocchi}.}\label{blW2}
\end{figure}

\begin{ex}
Suppose $m=12$, $s=10$ and $t=16$. Following the proof of  Lemma \ref{non 8p}, where $p=5$, $\ell=16$, $h=1$, $y=3$
and $X=\{0,1,2\} \cup \{64,65,66 \}$, 
we obtain the following sequence $(B_1,\ldots,B_{6} )$ of shiftable blocks:

\begin{footnotesize}
$$\begin{array}{rcl}
B_1 & =& \begin{array}{|c|c|c|c|c| c|c|c|c|c|}\hline
1& -17& 4& -20& -7& 39& 10& -26& -42& 58 \\\hline
 -33& 49& -36& 52& 23& -55& -13& 29& 45& -61  \\\hline
\end{array}\;, \\[8pt]
B_2 & =& \begin{array}{|c|c|c|c|c| c|c|c|c|c|}\hline
 2& -18& 5& -21& -8& 40& 11& -27& -43& 59  \\\hline
-34& 50& -37& 53& 24& -56& -14& 30& 46& -62  \\\hline
\end{array}\;, \\[8pt]
B_3 & =& \begin{array}{|c|c|c|c|c| c|c|c|c|c|}\hline
 3& -19& 6& -22& -9& 41& 12& -28& -44& 60  \\\hline
   -35& 51& -38& 54& 25& -57& -15& 31& 47& -63  \\\hline 
\end{array}\;, \\[8pt]
B_4 & =& \begin{array}{|c|c|c|c|c| c|c|c|c|c|}\hline
 65& -81& 68& -84& -71& 103& 74& -90& -106& 122  \\\hline
 -97& 113& -100& 116& 87& -119& -77& 93& 109& -125  \\\hline
\end{array}\;, \\[8pt]
B_5 & =& \begin{array}{|c|c|c|c|c| c|c|c|c|c|}\hline
 66& -82& 69& -85& -72& 104& 75& -91& -107& 123  \\\hline 
 -98& 114& -101& 117& 88& -120& -78& 94& 110& -126  \\\hline 
\end{array}\;, \\[8pt]
B_6 & =& \begin{array}{|c|c|c|c|c| c|c|c|c|c|}\hline
  67& -83& 70& -86& -73& 105& 76& -92& -108& 124  \\\hline
  -99& 115& -102& 118& 89& -121& -79& 95& 111& -127  \\\hline
\end{array}\;.
\end{array}$$
\end{footnotesize}
\end{ex}

We now consider the case when $\ell=\frac{2ms}{t}+1$ is odd (that is, $t$ divides $ms$).
We need to distinguish three possibilities, depending on the residue class of $s$ modulo $6$.
So,  write $s=6q+r$, where $q\geq 0$ and $r\in \{0,8,10\}$. 
We start with the following  auxiliary lemma, that allows us to apply an inductive process.

\begin{lem}\label{F_rho}
Let $h\geq 1$ and  let $\rho\geq 3$ be an odd integer. There exists a sequence $\F=\F_{h,\rho}$ consisting of
blocks of size $2\times 6$ such that
\begin{itemize}
\item[(1)] $\F$ has cardinality $h$ and satisfies condition \eqref{blocchi};
\item[(2)]
$\supp(\F)=\left[1, 12 h+\left\lfloor \frac{12h}{\rho-1} \right\rfloor \right]\setminus \left\{\rho, 2\rho, 
\ldots,\left\lfloor \frac{12h}{\rho-1} \right\rfloor \rho \right\}$.
\end{itemize}
\end{lem}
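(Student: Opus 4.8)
The plan is to read the prescribed support as the set of all integers of the initial segment $[1,12h+M]$ that are \emph{not} multiples of $\rho$, where $M=\lfloor 12h/(\rho-1)\rfloor$. From $M(\rho-1)\le 12h<(M+1)(\rho-1)$ one sees that the largest multiple of $\rho$ not exceeding $12h+M$ is $M\rho$, so there are exactly $M$ forbidden values in the segment and the target set has cardinality $12h$; this is precisely the number of entries of $h$ completely filled blocks of size $2\times 6$. Hence the whole task is to distribute these $12h$ integers among $h$ blocks and, inside each block, to sign and position them so that the block is shiftable, both its rows sum to $0$, and its column sums are $(\sigma_1,-\sigma_1,\sigma_2,-\sigma_2,\sigma_3,-\sigma_3)$ for a \emph{single} triple $(\sigma_1,\sigma_2,\sigma_3)$ common to all $h$ blocks, as demanded by \eqref{blocchi}.

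The construction I would set up uses a small bank of explicit seed blocks together with the shift operation, assembling the $h$ blocks directly (equivalently, by a one-step induction on $h$ that appends the next twelve non-multiples of $\rho$). The crucial mechanism is that $B\pm x$ changes neither the row sums nor the column sums of a shiftable $B$, so any family of shifts of a single seed automatically shares its triple $(\sigma_1,\sigma_2,\sigma_3)$; moreover adding $\rho$ to every value of a block maps non-multiples to non-multiples and advances the enumeration by one full period. Consequently a run of twelve consecutive non-multiples falls into only finitely many ``gap patterns'' (for $\rho\ge 14$ either a gap-free run $\{c{+}1,\dots,c{+}12\}$ or a run straddling a single forbidden multiple), and each pattern can be realized by one seed, slid into place by an appropriate $\pm x$. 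Gap-free runs are all shifts of a single seed of support $[1,12]$; the runs straddling a multiple of $\rho$ require explicit \emph{transition} seeds, whose holed supports $\{c{+}1,\dots,c{+}a,c{+}a{+}2,\dots,c{+}13\}$ skip exactly the removed value.

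The delicate point, where one really needs an explicit design rather than mere shifting, is that all these seeds must carry \emph{the same} triple $(\sigma_1,\sigma_2,\sigma_3)$: a column summing to $\pm\sigma_i$ is a pair of support values whose difference is $\sigma_i$, so the column sums rigidly constrain which values may be paired, and a gap-free seed and a gap-straddling seed have genuinely different supports that must nonetheless admit compatible pairings into columns of differences $\sigma_1,\sigma_2,\sigma_3$ while keeping both rows summing to $0$. I would therefore choose $(\sigma_1,\sigma_2,\sigma_3)$ first and then build the handful of seeds around it, verifying shiftability and the row-sum condition by hand, exactly in the spirit of the seed blocks $W_4,W_6$ of Lemmas \ref{8p} and \ref{non 8p}.

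Finally, the regime $\rho-1<12$ (small $\rho$), in which a single block necessarily encloses several forbidden multiples, would be settled by a short separate list of seed blocks indexed by the residue of the starting position modulo $\rho-1$; and the bookkeeping verifying that the shifts are pairwise disjoint and that the union has the exact endpoints $[1,12h+M]\setminus\{j\rho:j\in[1,M]\}$ is the routine floor-function computation illustrated in Lemmas \ref{k4-3}--\ref{k4-1}. The main obstacle I expect is thus constructing the common-$(\sigma_1,\sigma_2,\sigma_3)$ seed library — in particular the gap-straddling transition blocks — rather than the assembly, which is then mechanical.
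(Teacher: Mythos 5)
Your plan coincides with the strategy the paper actually follows: a gap-free seed block with support $[1,12]$ (the paper's $V_{13}$ of Figure \ref{bl6}, used to build the padding sequences $U(b)$), a family of transition seeds with support $[1,13]$ minus a single hole (the paper's $V_{1},V_3,\ldots,V_{13}$), all sharing one column-sum triple $(\sigma_1,\sigma_2,\sigma_3)=(-2,-2,1)$, assembled by shifts into a basic sequence covering $c$ full periods of $\rho$ (with $c(\rho-1)\equiv 0\pmod{12}$) and then tiled; the residues $\rho=3,5$ are handled by separate seeds $F_3,F_5$. Two refinements the paper exploits and you do not mention: condition \eqref{blocchi} only requires a common triple \emph{within} one sequence $\F_{h,\rho}$, not across all $\rho$ (indeed $F_3$ carries the different triple $(-1,-3,6)$), and a parity observation (the offset of each forbidden multiple from the start of its block is always odd because $\rho$ is odd) means only the seven transition seeds with odd hole positions are ever needed.

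The genuine gap is that you never exhibit the seed library, and its existence is the entire mathematical content of the lemma. You correctly isolate the difficulty: thirteen consecutive values with one omitted must be signed and arranged in a $2\times 6$ shiftable block with both row sums zero and column sums $(-2,2,-2,2,1,-1)$, for every admissible position of the omission, and the column sums rigidly force a matching of the twelve surviving values into pairs with differences $2,2,2,2,1,1$ compatible with the row and shiftability constraints. But ``I would choose the triple first and build the seeds around it'' is a statement of intent, not a proof; it is not a priori clear that such blocks exist for all seven odd hole positions, nor that the $\rho=3$ and $\rho=5$ seeds (whose supports must avoid six, respectively three, forbidden values inside a single block) exist at all. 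The paper settles precisely this point by displaying the blocks of Figure \ref{bl6} explicitly and leaving their verification to inspection; until those finitely many blocks are written down and checked, the lemma is not proved. The remaining assembly and floor-function bookkeeping you describe is indeed routine and matches the paper's.
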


\begin{proof}
Consider the  $2\times 6$ shiftable blocks given in Figure \ref{bl6}.
Observe that $\supp(F_3)=[1,18]\setminus\{3,6,9,12,15,18\}$,
$\supp(F_5)=[1,15]\setminus\{5,10,15\}$ and $\supp(V_j)=[1,13]\setminus\{ j\}$.
Furthermore, each of these blocks has rows that sum to zero and  columns with the following sums:
$$F_3:\; (-1,1, -3,3, 6,-6),\qquad F_5, V_j=(-2,2,-2,2,1,-1).$$
We  construct our sequence $\F$ distinguishing three cases:
we use only blocks of type  $F_3\pm x$; or we use only blocks of type
$F_5\pm x$; or we use blocks of type  $V_j\pm x$ for several choices of $j$ in the same sequence.

\begin{figure}[ht]
\begin{footnotesize}
 $$\begin{array}{rclrcl}
F_{3} & =& \begin{array}{|r|r|r|r|r|r|}\hline
1 & -4 & -10 & 16 & 14 & -17\\\hline
-2 & 5 & 7 & -13 & -8 & 11 \\\hline
\end{array}\;, & 
F_{5} & =& \begin{array}{|r|r|r|r|r|r|}\hline
1 & -2 & 6 & -7 & -11 & 13\\\hline
-3 & 4 & -8 & 9 & 12 & -14 \\\hline
\end{array}\;,\\[8pt]
V_1 & =& \begin{array}{|r|r|r|r|r|r|}\hline
2 & -3 & 6 & -7 & -10 & 12\\\hline 
-4 & 5 & -8 & 9 & 11 & -13 \\\hline 
\end{array}\;, & 
V_3 & =& \begin{array}{|r|r|r|r|r|r|}\hline
4 & 7 & -13 & 12 & -8 & -2 \\ \hline
-6 & -5 & 11 & -10 & 9 & 1 \\ \hline  
\end{array}\;,\\[8pt]
V_5 & =& \begin{array}{|r|r|r|r|r|r|}\hline
1 & -2 & 6 & -7 & -10 & 12\\\hline 
-3 & 4 & -8 & 9 & 11 & -13 \\\hline 
\end{array}\;, &
V_7 & =& \begin{array}{|r|r|r|r|r|r|}\hline
1 & -2 & 8 & 11 & -5 & -13 \\ \hline
-3 & 4 & -10 & -9 & 6 & 12 \\\hline
\end{array}\;,\\[8pt]
V_9 & =& \begin{array}{|r|r|r|r|r|r|}\hline
1 & -2 & 5 & -6 & -10 & 12\\\hline 
-3 & 4 & -7 & 8 & 11 & -13 \\\hline 
\end{array}\;,& 
V_{11} & =& \begin{array}{|r|r|r|r|r|r|}\hline
1 & -2 & -10 & -7 & 13 & 5\\ \hline
-3 & 4 & 8 & 9 & -12 & -6 \\ \hline
\end{array}\;,\\[8pt]
V_{13} & =& \begin{array}{|r|r|r|r|r|r|}\hline
1 & -2 & 5 & -6 & -9 & 11 \\\hline 
-3 & 4 & -7 & 8 & 10 & -12\\\hline 
\end{array}\;.
\end{array}$$
\end{footnotesize}
\caption{Shiftable blocks of size $2\times 6$ satisfying condition \eqref{blocchi}.}\label{bl6}
\end{figure}

In several cases, we provide a basic sequence $S$  of blocks having disjoint supports.
To simplify the notation, if $S=(B_1,B_2,\ldots,B_r)$  we write $\supp(S)$ instead of $\bigcup\limits_{i=1}^r 
\supp(B_i)$.
Also,  recall that $S\pm x$ means the sequence $(B_1\pm x, \ldots,B_r\pm x)$. 
It will be very useful to define the following sequence. For all $b\geq 1$ let
$$U(b)=(V_{13},V_{13}\pm 12, V_{13}\pm 24, \ldots, V_{13}\pm 12(b-1)).$$
Also, we set $U(0)$ to be the empty sequence: so, for all $b\geq 0$ the sequence $U(b)$ contains $b$ elements
and $\supp(U(b))=[1,12b]$.

If $\rho=3$ we take $\F=\con\limits_{c=0}^{h-1} (F_3\pm 18c)$;
if $\rho=5$ we take $\F=\con\limits_{c=0}^{h-1} (F_5\pm 15c)$. 

If $\rho=12x+7$, we take $S=(U(x), V_7\pm 12x, U(x)\pm (12x+13))$, 
whence $\supp(S)=[1,2\rho]\setminus\{\rho,2\rho \}$.
In this case we define $\F$ as the sequence of the first $h$ elements of 
$\con\limits_{c= 0}^{\Sh} (S\pm 2\rho c)$. 

If $\rho=12x+9$, we take 
$$S=(U(x), V_9\pm 12x, U(x) \pm(12x+13),   V_5\pm (24x+13), U(x)\pm (24x+26)).$$
In fact, $\supp(S)=[1,3\rho ]\setminus \{\rho, 2\rho, 3\rho \}$. 
In this case we define $\F$ as the sequence of the first $h$ elements of 
$\con\limits_{c= 0}^{\Sh} (S\pm 3\rho c)$. 

If $\rho=12x+11$, we take
$$\begin{array}{rcl}
S & =& ( U(x), V_{11}\pm 12x, U(x)\pm (12x+13), V_9\pm (24x+13), U(x)\pm (24x+26), \\
&& V_7\pm (36x+26), U(x)\pm (36x+39), V_5\pm (48x+39), U(x)\pm (48x+52),  \\
&& V_3\pm (60x+52), U(x)\pm (60x+65)). 
  \end{array}
$$
We obtain $\supp(S)=[1,6\rho]\setminus \{j\rho: j \in [1,6]  \}$, and then
$\F$ is the sequence consisting of the first $h$ elements of $\con\limits_{c= 0}^{\Sh} (S\pm 6\rho c)$. 

If $\rho=12x+13$, we take $S=(U(x),V_{13}\pm 12x )$, 
whence $\supp(S)=[1,\rho]\setminus\{\rho \}$.
In this case we define $\F$ as the sequence of the first $h$ elements of 
$\con\limits_{c=0}^{\Sh} (S\pm \rho c)$. 

If $\rho=12x+15$, we take
$$\begin{array}{rcl}
S & =& ( U(x+1), V_3 \pm 12(x+1), U(x)\pm (12x+25), V_5\pm (24x+25), 
U(x)\pm (24x+38), \\
&& V_7 \pm (36x+38), U(x) \pm (36x+51), 
V_9\pm (48x+51), U(x)\pm (48x+64),\\
&& V_{11}\pm (60x+64), U(x+1)\pm (60x+77)). 
  \end{array}
$$
We get $\supp(S)=[1, 6\rho ]\setminus \{j\rho: j \in [1,6]  \}$. 
In this case we define $\F$ as the sequence of the first $h$ elements of 
$\con\limits_{c=0}^{\Sh} (S\pm 6\rho c)$. 

Finally, if $\rho=12x+17$, we take
$$S=(U(x+1), V_5 \pm 12(x+1), U(x)\pm (12x+25), V_9\pm (24x+25), U(x+1)\pm (24x+38)).$$
In fact, $\supp(S)=[1,3\rho ]\setminus \{\rho, 2\rho, 3\rho \}$. 
In this case we define $\F$ as the sequence of the first $h$ elements of 
$\con\limits_{c=0}^{\Sh} (S\pm 3\rho c)$. 
\end{proof}

\begin{ex}
Following the proof of  Lemma \ref{F_rho}, we obtain the following sequence
$\F_{5,15}=(B_1,\ldots,B_{5} )$ of shiftable blocks:

\begin{footnotesize}
$$\begin{array}{rcl}
B_1 & =& \begin{array}{|c|c|c|c|c|c|}\hline
 1& -2& 5& -6& -9& 11 \\\hline
-3& 4& -7& 8& 10& -12 \\\hline 
\end{array}\;, \\[8pt]
B_2 & =& \begin{array}{|c|c|c|c|c|c|}\hline
 16& 19& -25& 24& -20& -14 \\\hline
 -18& -17& 23& -22& 21& 13 \\\hline 
\end{array}\;, \\[8pt]
B_3 & =& \begin{array}{|c|c|c|c|c|c|}\hline
 26& -27& 31& -32& -35& 37\\\hline
   -28& 29& -33& 34& 36& -38 \\\hline
\end{array}\;, \\[8pt]
B_4 & =& \begin{array}{|c|c|c|c|c|c|}\hline
 39& -40& 46& 49& -43& -51 \\\hline
   -41& 42& -48& -47& 44& 50 \\\hline
\end{array}\;, \\[8pt]
B_5 & =& \begin{array}{|c|c|c|c|c|c|}\hline
 52& -53& 56& -57& -61& 63 \\\hline
 -54& 55& -58& 59& 62& -64 \\\hline
\end{array}\;.
\end{array}$$
\end{footnotesize}
\end{ex}

\begin{prop}\label{G_rho-6}
Let $m$ and $s$ be even integers with $m\geq 2$, $s\geq 6$ and $s\equiv 0 \pmod 6$.
Let $t$ be a divisor of $ms$ and set $\ell=\frac{2ms}{t}+1$.
There exists a sequence $\B$ of $\frac{m}{2}$ blocks of size $2\times s$
such that $\B$ satisfies condition \eqref{blocchi} and
$\supp(\B)=\left[ 1,ms+\left\lfloor t/2
\right\rfloor \right]
\setminus \left\{j\ell: j \in \left[1, \left\lfloor t/2 \right\rfloor \right]\right\}$.
\end{prop}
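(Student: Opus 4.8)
The plan is to reduce the statement directly to Lemma \ref{F_rho}, exploiting the divisibility $6\mid s$. Since $s\equiv 0\pmod 6$ and $s\geq 6$, write $q=s/6\geq 1$, so that a block of size $2\times s$ is nothing but the juxtaposition of $q$ blocks of size $2\times 6$. Because $t$ divides $ms$, the integer $\ell=\frac{2ms}{t}+1$ is odd, and $\ell\geq 3$ (the extreme case $t=ms$ yielding $\ell=3$). Moreover, since $m$ is even, $h:=\frac{mq}{2}=\frac{m}{2}\cdot\frac{s}{6}=\frac{ms}{12}$ is a positive integer. These are exactly the hypotheses required to invoke Lemma \ref{F_rho} with $\rho=\ell$.

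First I would apply Lemma \ref{F_rho} with this $h$ and $\rho=\ell$ to obtain a sequence $\F=\F_{h,\ell}$ of $h$ shiftable blocks of size $2\times 6$, each satisfying \eqref{blocchi}, with
$$\supp(\F)=\left[1,\,12h+\left\lfloor \frac{12h}{\ell-1}\right\rfloor\right]\setminus\left\{j\ell: j\in\left[1,\left\lfloor \frac{12h}{\ell-1}\right\rfloor\right]\right\}.$$
The next step is purely arithmetic: since $12h=6mq=ms$ and $\ell-1=\frac{2ms}{t}$, one has $\frac{12h}{\ell-1}=\frac{ms}{2ms/t}=\frac{t}{2}$ \emph{exactly}, so $\left\lfloor \frac{12h}{\ell-1}\right\rfloor=\left\lfloor t/2\right\rfloor$. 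Substituting, $\supp(\F)$ is precisely $\left[1,ms+\lfloor t/2\rfloor\right]\setminus\{j\ell: j\in[1,\lfloor t/2\rfloor]\}$, the target support.

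Finally I would assemble $\B$ from $\F$. Partition the $h=\frac{m}{2}\cdot q$ blocks of $\F$ into $\frac{m}{2}$ consecutive groups of $q$ blocks each, and let each element of $\B$ be the juxtaposition of the $q$ blocks in one group; this produces a sequence of $\frac{m}{2}$ blocks of size $2\times 6q=2\times s$ whose overall support equals $\supp(\F)$ (a disjoint union, unaffected by regrouping). It remains to check that juxtaposition preserves \eqref{blocchi}: if each constituent $2\times 6$ block $F$ satisfies $\tau_1(F)=\tau_2(F)=0$ with paired column sums $\gamma_{2i-1}(F)=-\gamma_{2i}(F)$, then the juxtaposed $2\times s$ block again has both row sums zero and column sums occurring in consecutive $(\sigma,-\sigma)$ pairs, and it is again shiftable (each row gains $3q$ positive and $3q$ negative entries, while each column retains one of each). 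Hence $\B$ satisfies \eqref{blocchi} with the required support.

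The construction is essentially forced once Lemma \ref{F_rho} is in hand, so there is no serious obstacle at this stage; the only points demanding care are the exact identity $\frac{12h}{\ell-1}=\frac{t}{2}$ (which makes both the deleted arithmetic progression $\{j\ell\}$ and the upper endpoint $ms+\lfloor t/2\rfloor$ match on the nose, for odd as well as even $t$) and the bookkeeping that juxtaposition respects condition \eqref{blocchi} and shiftability. All the genuine difficulty has already been absorbed into Lemma \ref{F_rho}.
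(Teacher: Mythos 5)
Your proposal is correct and follows essentially the same route as the paper: invoke Lemma \ref{F_rho} with $h=\frac{mq}{2}$ and $\rho=\ell$, note that $12h=ms$ and $\frac{12h}{\ell-1}=\frac{t}{2}$ so the supports match, and juxtapose $q$ consecutive $2\times 6$ blocks to form each $2\times s$ block, checking that condition \eqref{blocchi} survives the regrouping. The only difference is that you spell out the arithmetic identification of the support and the preservation of \eqref{blocchi} more explicitly than the paper does.
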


\begin{proof}
Write $s=6q$, with $q\geq 1$: by Lemma \ref{F_rho}, we can construct a sequence 
$\F=\F_{\frac{mq}{2}, \ell}=(A_1,\ldots,A_{\frac{mq}{2}})$ 
of shiftable blocks of size $2\times 6$ in such a way that $\F$ satisfies condition 
\eqref{blocchi}  and 
$\supp(\F)=\left[ 1,ms+\left\lfloor t/2
\right\rfloor \right]
\setminus \left\{j\ell: j \in \left[1, \left\lfloor t/2 \right\rfloor \right]\right\}$.
So, for all $i \in [1,\frac{m}{2}]$, let $B_i$ be the block of size $2\times s$ obtained by juxtaposing the $q$ blocks
of $\F$
$$A_{(i-1)q+1},\;A_{(i-1)q+2},\;A_{(i-1)q+3},\; \ldots,A_{iq}.$$
By construction, the sequence $\B=(B_1,\ldots,B_{\frac{m}{2}})$ satisfies condition \eqref{blocchi}
and has cardinality $\frac{m}{2}$.
Furthermore, since we used all the blocks of $\F$, we obtain that  $\supp(\B)=\supp(\F)$.
\end{proof}

\begin{prop}\label{G_rho-8}
Let $m$ and $s$ be even integers with $m\geq 2$, $s\geq 8$  and $s\equiv 2 \pmod 6$. Let $t$ be a divisor of $ms$
and set $\ell=\frac{2ms}{t}+1$.
There exists a sequence $\B$ of $\frac{m}{2}$ blocks of size $2\times s$ such  that $\B$ satisfies 
condition \eqref{blocchi} and
$\supp(\B)=[1,ms+\lfloor t/2\rfloor]
\setminus \{j\ell: j \in [1,\lfloor t/2 \rfloor ]\}$.
\end{prop}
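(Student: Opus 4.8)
The plan is to mirror the proof of Proposition~\ref{G_rho-6}. Since $s\equiv 2\pmod 6$ and $s\geq 8$, I would write $s=6q+8$ with $q\geq 0$, and assemble each of the $\frac m2$ required blocks $B_i$ of size $2\times s$ by juxtaposing $q$ blocks of size $2\times 6$ with a single block of size $2\times 8$. The key observation that makes this reduction legitimate is that condition \eqref{blocchi} is preserved under horizontal juxtaposition: if two blocks each have $\tau_1=\tau_2=0$ and paired column sums $\sigma_i,-\sigma_i$, then so does their concatenation (the rows still sum to zero and the $\sigma$-list is just the concatenation of the two lists), and shiftability is inherited column-by-column. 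Consequently the grouping of the narrow blocks into the $B_i$ is cost-free, and the only thing I must control is the aggregate support of the whole family of narrow blocks, namely that it equals $[1,ms+\lfloor t/2\rfloor]\setminus\{j\ell:j\in[1,\lfloor t/2\rfloor]\}$.

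The width-$6$ part is supplied directly by Lemma~\ref{F_rho}: applied with $\rho=\ell$ and $h=\frac{mq}{2}$ it produces $\frac{mq}{2}$ shiftable $2\times 6$ blocks realizing a contiguous punctured interval starting at $1$, missing exactly the multiples of $\ell$. The genuinely new ingredient is the width-$8$ part, and here I would introduce, in analogy with Figure~\ref{bl6}, a short menu of shiftable $2\times 8$ blocks satisfying \eqref{blocchi}: a ``full'' block with support $[1,16]$ (the width-$8$ analogue of $V_{13}$, to be shifted and concatenated in $U(b)$-fashion) together with punctured blocks of support $[1,17]\setminus\{j\}$ for the residues $j$ that arise, so that a missing value can be placed at any prescribed position modulo $\ell$. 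From these one builds a fundamental sequence covering a bounded number of consecutive length-$\ell$ periods and then tiles the remaining range by adding multiples of the period, exactly as in the case analysis of Lemma~\ref{F_rho}.

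The two families are then fitted together so that their supports partition the target set, with the width-$6$ family occupying the bottom contiguous range produced by Lemma~\ref{F_rho} and the width-$8$ family covering the complementary top range. A count confirms the sizes match, since $\frac m2\cdot 16+\frac{mq}{2}\cdot 12=m(6q+8)=ms$, and the $\lfloor t/2\rfloor$ omitted multiples of $\ell$ are split between the two families. The extreme case $q=0$ (that is $s=8$) is degenerate and must be handled purely by the width-$8$ family; I would treat it as the base of the construction.

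I expect the main obstacle to be the bookkeeping for the punctures. Because the number of width-$8$ blocks is forced to be exactly $\frac m2$ (one per $B_i$), one cannot freely choose how many full periods the width-$8$ family covers, so in general the boundary between the two families will fall in the interior of a period and the first omitted value in the width-$8$ range need not sit at a period boundary. This is precisely why the menu of punctured $2\times 8$ blocks must be rich enough to realize every residue of the puncture position modulo $\ell$, and it is the analogue of the dependence on $\rho\bmod 12$ in Lemma~\ref{F_rho}. The explicit block constructions and the careful interval accounting that guarantees disjointness and the correct placement of every multiple of $\ell$ are where the work of the proof will be concentrated.
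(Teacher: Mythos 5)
Your plan coincides with the paper's proof in all of its strategic choices: write $s=6q+8$, take the width-$6$ part from Lemma \ref{F_rho} with $h=\frac{mq}{2}$ and $\rho=\ell$ so that it fills an initial segment $[1,N]$ minus its multiples of $\ell$, build a width-$8$ family $\G$ of exactly $\frac{m}{2}$ blocks for the complementary top range, and glue one width-$8$ block onto each group of $q$ width-$6$ blocks, using the fact that condition \eqref{blocchi} is preserved under juxtaposition. Your identification of the delicate point --- the first omitted value of the top range need not sit at a period boundary, forcing a case analysis on the residue of the puncture position --- is also correct and is exactly how the paper proceeds for large $\ell$.

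However, the menu of width-$8$ blocks you propose --- one full block with support $[1,16]$ plus single-puncture blocks with support $[1,17]\setminus\{j\}$ --- cannot work for small $\ell$, and this is a genuine gap rather than bookkeeping. A $2\times 8$ block carries $16$ support elements, so with only zero- or one-puncture blocks the $\frac{m}{2}$ blocks of $\G$ can omit at most $\frac{m}{2}$ values in total, whereas the top range must omit roughly $\frac{8m}{\ell-1}$ multiples of $\ell$; this forces $\ell-1\geq 16$, i.e.\ $\ell\geq 17$. For $\ell\in\{3,5,7,9,11,13,15\}$ (all of which occur as $t$ ranges over the divisors of $ms$) no block of your menu can realize the required support density $\frac{\ell-1}{\ell}<\frac{16}{17}$, and one needs width-$8$ blocks omitting two, three, four or eight values from longer windows --- the paper's $W_{7,i}$, $W_{9,5}$, $W_{11,i}$, $F_5$, $F_3$ of Figure \ref{bl8} --- together with a separate case analysis on $N\bmod\ell$ (five subcases for $\ell=11$, seven for $\ell=15$, and a parity split for $\ell=9$), since $N$ is generally not a multiple of $\ell$, unlike in Lemma \ref{F_rho} where the construction always starts at $1$. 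For $\ell\geq 17$ your description matches the paper's $U(b)$ and $V_{r_j}$ scheme and is correct as it stands.
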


\begin{proof}
Write $s=6q+8$ and  $N=6mq+\eta$, where $\eta=\left\lfloor \frac{6mq}{\ell-1}\right\rfloor$.
By Lemma \ref{F_rho}, we can construct a sequence $\F=\F_{\frac{mq}{2},\ell}$ satisfying condition \eqref{blocchi} and 
whose elements are shiftable blocks of size $2\times 6$.
We also have $\supp(\F)=[1,N]\setminus\{j\ell: j \in [1,\eta] \}$.
Now, we have to construct a sequence $\G$ of shiftable blocks of size $2\times 8$ satisfying condition \eqref{blocchi}
in such a way that $|\G|=\frac{m}{2}$ and 
$$\supp(\G)=[N+1,ms+\lfloor t/2\rfloor]\setminus \{j\ell: j \in [\eta+1,\lfloor t/2 \rfloor ]\}.$$
To this purpose, we use the shiftable blocks of size $2\times 8$ of Figure \ref{bl8}.
Each of these blocks has rows that sum to zero and columns 
with the following sums:
$$F_3:  (-3, 3, 3, -3, -3, 3, 3, -3),\qquad 
F_5, W_{i,j}, V_i:  (-2,2,1,-1,-2,2,1,-1).$$
Furthermore, observe that
$$\begin{array}{rcl}
\supp (F_3)&=& [1,24]\setminus \{3,6,9,12,15,18,21,24\},\\
\supp (F_5)&=&[1,20]\setminus \{5,10,15,20\},\\
\supp (W_{7,3})&=&[1,19] \setminus \{3,10,17\},\\
\supp (W_{7,i})&=&[1,18] \setminus \{i,7+i\}, \quad  i=5,7, \\
 \supp (W_{9,5})&=&[1,18] \setminus \{5,14\},\\
 \supp (W_{11,i})&=&[1,18] \setminus \{i,11+i\}, \quad  i=3,5, \\
 \supp (V_{j})&=&[1,17] \setminus \{j\}, \quad j=1,3,5,7,9,11,13,15,17.
\end{array}$$
Keeping the strategy of Lemma \ref{F_rho}, we describe basic sequences $S$ of blocks.

If $\ell=3$, then $t=ms$ and $N=9mq\equiv 0 \pmod 3$. 
We can take
$\G=\con\limits_{c=0}^{m/2-1}(F_3\pm (N+24c))$.
In fact,  we have $\supp(\G)=\left[9mq+1, \frac{3ms}{2}\right]\setminus \left\{3j : 
j \in  \left[3mq+1, \frac{ms}{2}\right] \right\}$, as required.
If $\ell=5$, then $N=15\frac{mq}{2}\equiv 0 \pmod 5$ and we can take 
$\G=\con\limits_{c=0}^{m/2-1}(F_5\pm (N +20c ))$.
We obtain $\supp(\G)=\left[15\frac{mq}{2}+1, 5\frac{ms}{4}\right]\setminus \left\{5j : 
j \in  \left[\frac{3mq}{2}+1, \frac{ms}{4}\right] \right\}$.

If $\ell=7$, then $3$ divides $m$ and $N=7mq\equiv 0 \pmod 7$.
We can take $S=(W_{7,7}, W_{7,3}\pm 18,W_{7,5}\pm 37 )$, as
$\supp(S)=[1, 8\ell]\setminus\{j \ell : j \in [1,8] \}$.
Since $|S|=3$, we take $\G=\con\limits_{c=0}^{m/6-1} (S\pm (N+8\ell c))$.

If $\ell=13$, then $3$ divides $m$ and $N=13\frac{mq}{2}\equiv 0 \pmod{13}$.
Let $S= (V_{13}, V_{9}\pm 17, V_{5}\pm 34)$: 
we have $\supp(S)=[1,  4\ell]\setminus \{j\ell: j \in [1,4] \}$.
Hence $\G=\con\limits_{c=0}^{m/6-1} (S\pm (N+4\ell c))$.

If $\ell=9$, we have to distinguish two cases.
\begin{itemize}
 \item[(1)] If $mq\equiv 0 \pmod 4$, then $N=27\frac{mq}{4}\equiv 0 \pmod 9$.
Hence, we can take $\G=\con\limits_{c=0}^{m/2-1}( V_9\pm (N+2\ell c) )$. Note that 
$\supp(V_9)=[1,2\ell]\setminus\{\ell,2\ell\}$.
\item[(2)] If $mq\equiv 2 \pmod 4$, then $N\equiv 4 \pmod 9$.
In this case, we take $\G=\con\limits_{c=0}^{m/2-1} (W_{9,5}\pm (N+2\ell c))$:
recall that $\supp(W_{9,5})=[1,2\ell]\setminus\{\ell-4, 2\ell-4 \}$.
\end{itemize}

For  $\ell=11$  we have five possibilities.
\begin{itemize}
 \item[(1)] If $mq\equiv 0 \pmod{10}$ then $N\equiv 0 \pmod \ell$, so we can take
$S=(V_{11}, W_{11,5}\pm 17, V_{9}\pm 35, W_{11,3}\pm 52, V_{7}\pm 70 )$.
We have  $\supp(S)=[1, 8\ell]\setminus \{j\ell: j\in [1,8]  \}$.
 \item[(2)] If  $mq\equiv 2\pmod{10}$, then $N\equiv 2 \pmod \ell$, so we can take
$S=( V_{9}, W_{11,3}\pm 17, V_{7}\pm 35, V_{11}\pm 53, W_{11,5}\pm 70 )$.
We get  $\supp(S)=[1,  8\ell]\setminus \{j\ell-2 : j\in [1,8] \}$.
\item[(3)]  If  $mq\equiv 4\pmod{10}$, then $N\equiv 4 \pmod \ell$, so we can take
$S=( V_{7}, V_{11} \pm 18, W_{11,5}\pm 35, V_{9}\pm 53, W_{11,3}\pm 70 )$.
Note that  $\supp(S)=[1, 8\ell ]\setminus \{j\ell-4: j\in [1,8]\}$.
\item[(4)]  If  $mq\equiv 6\pmod{10}$, then $N\equiv 6 \pmod \ell$, so we can take
$S=( W_{11,5}, V_{9} \pm 18, W_{11,3}\pm 35, V_{7}\pm 53, V_{11}\pm 71 )$.
We have  $\supp(S)=[1, 8\ell]\setminus \{ j\ell-6: j\in [1,8]\}$.
\item[(5)]  If  $mq\equiv 8\pmod{10}$, then $N\equiv 8 \pmod \ell$, so we can take
$S=(W_{11,3}, V_{7} \pm 18, V_{11}\pm 36, W_{11,5}\pm 53, V_{9}\pm 71  )$.
We get  $\supp(S)=[1,  8\ell]\setminus \{j\ell-8: j\in [1,8]\}$.
 \end{itemize}
In all five cases, $\G$ consists of the first $\frac{m}{2}$ elements of 
$\con\limits_{c=0}^{\lfloor m/10 \rfloor} (S\pm (N+8\ell c))$. 

For  $\ell=15$  we have seven possibilities.
\begin{itemize}
 \item[(1)] If $mq\equiv 0 \pmod{14}$ then $N\equiv  0\pmod \ell$, so we can take
$S=( V_{15}, V_{13}\pm 17, V_{11}\pm 34, V_{9}\pm 51,
V_{7}\pm 68,  V_{5}\pm 85, V_{3}\pm 102 )$.
In fact,  $\supp(S)=[1,    8\ell]\setminus \{ j\ell: j\in [1,8]  \}$.
 \item[(2)] If $mq\equiv 2 \pmod{14}$ then $N\equiv  12\pmod \ell$, so we can take
$S=( V_{3}, V_{15}\pm 18, V_{13}\pm 35, V_{11}\pm 52,
V_{9}\pm 69, V_{7}\pm 86, V_{5}\pm 103 )$.
We get   $\supp(S)=[1, 8\ell ]\setminus \{ j\ell-12:j\in [1,8] \}$.
 \item[(3)] If $mq\equiv 4 \pmod{14}$ then $N\equiv  10 \pmod \ell$, so we can take
$S=( V_{5}, V_{3}\pm 17, V_{15}\pm 35, V_{13}\pm 52,
V_{11}\pm 69, V_{9}\pm 86, V_{7}\pm 103 )$.
In fact,  $\supp(S)=[1,8\ell ]\setminus \{j\ell-10: j\in [1,8]  \}$.
 \item[(4)] If $mq\equiv 6 \pmod{14}$ then $N\equiv  8 \pmod \ell$, so we can take
$S=( V_{7}, V_{5}\pm 17, V_{3}\pm 34 ,V_{15}\pm 52,
V_{13}\pm 69, V_{11}\pm 86, V_{9}\pm 103 )$.
We have  $\supp(S)=[ 1, 8\ell]\setminus \{ j\ell-8 : j\in [1,8]\}$.
 \item[(5)] If $mq\equiv 8 \pmod{14}$ then $N\equiv 6  \pmod \ell$, so we can take
$S=( V_{9}, V_{7}\pm 17, V_{5}\pm 34, V_{3}\pm 51,
V_{15}\pm 69, V_{13}\pm 86, V_{11}\pm 103)$.
In fact,  $\supp(S)=[ 1, 8\ell]\setminus \{j\ell-6 :j\in [1,8]\}$.
 \item[(6)] If $mq\equiv 10 \pmod{14}$ then $N\equiv 4  \pmod \ell$, so we can take
$S=(V_{11}, V_{9}\pm 17, V_{7}\pm 34, V_{5}\pm 51,
V_{3}\pm 68, V_{15}\pm 86, V_{13}\pm 103 )$.
In fact,  $\supp(S)=[ 1, 8\ell]\setminus \{j\ell-4 :j\in [1,8]\}$.
 \item[(7)] If $mq\equiv 12 \pmod{14}$ then $N\equiv 2  \pmod \ell$, so we can take
$S=(V_{13}, V_{11}\pm 17, V_{9}\pm 34, V_{7}\pm 51,
V_{5}\pm  68, V_{3}\pm 85, V_{15}\pm 103 )$.
In fact,  $\supp(S)=[ 1,  8\ell]\setminus \{j\ell-2 :j\in [1,8] \}$.
\end{itemize}
In all seven cases, $\G$ consists of the first $\frac{m}{2}$ elements of 
$\con\limits_{c=0}^{\lfloor m/14 \rfloor} (S\pm (N+8\ell c))$.

Suppose now that $\ell\geq 17$: in this case, any set of $16$ consecutive integers contains at
most one multiple of $\ell$. We start considering the interval $[N+1,N+16]$ and the first multiple of $\ell$ belonging to the interval $[N+1,ms+\lfloor t/2\rfloor]$. So, if $(\eta+1)\ell$ is an element of $[N+1,N+16]$ we take the block $V_{r}$ where $r$ must be chosen in such a way that 
$\supp(V_r)$ does not contain $(\eta+1)\ell$. Otherwise, we take the block $V_{13}$ and repeat this process considering the interval $[N+17,N+32]$.

It will be useful to  define,  for all $b\geq 1$,  the sequence
$$U(b)=(V_{17},V_{17}\pm 16, V_{17} \pm 32,\ldots, V_{17}\pm 16(b-1)).$$
Also, we set $U(0)$ to be the empty sequence: so, for all $b\geq 0$ the sequence $U(b)$ contains $b$ elements and $\supp(U(b))=[1,16b]$.

Write $(\eta+1)\ell-N=16h_0+r_0$, where $0\leq r_0 < 16$, and define the sequence
$$S_0=(U(h_0), V_{r_0}\pm 16h_0 ).$$
Note that $r_0$ is odd, since $\ell$ is odd and $(\eta+1)\ell-N \equiv (\eta+1)\ell+\eta \equiv 1\pmod 2$.
Furthermore, $\supp(S_0\pm N)=[N+1, N+16h_0+17]\setminus \{ (\eta+1)\ell \}$.

Now, for all $j\in [1,\lfloor t/2 \rfloor - \eta]$, write 
$\ell-17+r_{j-1}=16h_j +r_j$, where $0\leq  r_j < 16$, and define the sequence
$$S_j=\left(U(h_j)\pm\left(17j+16\sum_{i=0}^{j-1} h_i   \right),
V_{r_j}\pm  \left(17j+16\sum_{i=0}^j h_i  \right)\right).$$
Note that $(\eta+j+1)\ell-N=16\sum_{i=0}^{j} h_i +17j+ r_j $ and
$$\supp(S_j\pm N)=\left[N+1+17j+16\sum_{i=0}^{j-1} h_i,\;  N+17(j+1)+ 16\sum_{i=0}^j h_i  \right]
\setminus \{(\eta+j+1)\ell\}.$$
The elements of $\G$ are the first $\frac{m}{2}$ blocks in 
$\con\limits_{c=0}^{\lfloor t/2 \rfloor - \eta} (S_c\pm N)$.

Finally, writing $\F=(A_1,\ldots,A_{\frac{mq}{2}})$ and $\G=(G_1,\ldots,G_{\frac{m}{2}} )$,
for all $i=1,\ldots,\frac{m}{2}$, let $B_i$ be the block of size $2\times s$ obtained by juxtaposing the $q$ blocks 
$$A_{(i-1)q+1},\;A_{(i-1)q+2},\;A_{(i-1)q+3},\; \ldots,A_{iq}$$
and the  block $G_i$.
By construction, the sequence $\B=(B_1,\ldots,B_{\frac{m}{2}})$ satisfies condition \eqref{blocchi}, 
has cardinality $\frac{m}{2}$ and  
$\supp(\B)=\supp(\F)\cup \supp(\G)=[ 1,ms+\lfloor t/2 \rfloor ]
\setminus \{j\ell: j \in [1,\lfloor t/2\rfloor]\}$.
\end{proof}

\begin{figure}[ht]
\begin{footnotesize}
$$\begin{array}{rcl}
F_{3} & =& \begin{array}{|c|c|c|c|c|c|c|c|}\hline
1 & -2 & -7 & 8    & 13 & -14 &  -19 & 20 \\ \hline
-4 & 5 &  10 & -11 & -16 & 17 & 22 & -23 \\\hline
\end{array}\;,\\[8pt]
F_5 & = & \begin{array}{|c|c|c|c|c|c|c|c|}\hline
1 & -2 &  19 & -14 & 7 & -6 & 12 & -17 \\ \hline
-3 & 4 & -18 & 13 & -9 & 8 & -11 & 16 \\ \hline
      \end{array}\;,\\[8pt]
      W_{7,3} & =& \begin{array}{|c|c|c|c|c|c|c|c|}\hline
4 & 7 & -8 & -12 & 13 & 16 & -1 & -19 \\ \hline
-6 & -5 & 9 & 11 & -15 & -14 & 2 & 18 \\ \hline
         \end{array}\;,\\[8pt]
W_{7,5} & = & \begin{array}{|c|c|c|c|c|c|c|c|}\hline
   1 & -2 & -6 & 8 & -15 & -14 & 18 & 10  \\\hline
   -3 & 4 & 7 & -9 & 13 & 16 & -17 & -11  \\\hline
         \end{array}\;,\\[8pt]
W_{7,7}  & = & \begin{array}{|c|c|c|c|c|c|c|c|}\hline
  1 & -2 &  6 & -16 & -11 & -8 & 13 & 17     \\\hline
  -3 & 4 & -5 & 15 & 9 & 10 & -12 & -18     \\\hline
         \end{array}\;,\\[8pt]
 W_{9,5} & =& \begin{array}{|c|c|c|c|c|c|c|c|}\hline
 1 & -2 & -6 & 8 & -12 & 13 & -17 & 15 \\\hline
 -3 & 4 & 7 & -9 & 10 & -11 & 18 & -16 \\\hline
               \end{array}\;,\\[8pt]
W_{11,3}  & =&  \begin{array}{|c|c|c|c|c|c|c|c|}\hline
5 & -8 & -1 & -13 & 9 & 6 & 18 & -16 \\\hline
-7 & 10 & 2 & 12 & -11 & -4 & -17 & 15 \\\hline
                \end{array}\;,\\[8pt]
W_{11,5} & =&  \begin{array}{|c|c|c|c|c|c|c|c|}\hline
  1 & -2 & -6 & 8 & 13 & 14 & -10 & -18 \\\hline
  -3 & 4 & 7 & -9 & -15 & -12 & 11 & 17 \\\hline
                \end{array}\;,\\[8pt]
V_1 & = & \begin{array}{|c|c|c|c|c|c|c|c|}\hline
2 & -3 &  17 & -13 & 7 & -6 & 11 & -15 \\ \hline
-4 & 5 & -16 & 12 & -9 & 8 & -10 & 14 \\ \hline
      \end{array}\;,\\[8pt]
  V_3 & = & \begin{array}{|c|c|c|c|c|c|c|c|}\hline
4 & 7 & -1 & 14 & -12 & 13 & -8 & -17 \\\hline
-6 & -5 & 2 & -15 & 10 & -11 & 9 & 16 \\ \hline
         \end{array}\;,\\[8pt]
         V_5 & = & \begin{array}{|c|c|c|c|c|c|c|c|}\hline
1 & -2 &  17 & -13 & 7 & -6 & 11 & -15 \\ \hline
-3 & 4 & -16 & 12 & -9 & 8 & -10 & 14 \\ \hline
         \end{array}\;,\\[8pt]
V_7 & = & \begin{array}{|c|c|c|c|c|c|c|c|}\hline
1 & -8 &  6 & 16 & -11 & -2 & 13 & -15 \\\hline
-3 & 10 & -5 & -17 & 9 & 4 & -12 & 14 \\\hline 
         \end{array}\;,\\[8pt]
V_9 & = & \begin{array}{|c|c|c|c|c|c|c|c|}\hline
1 & -2 & 17 & -13 & 6 & -5 & 11 & -15 \\ \hline
-3 & 4 & -16 & 12 & -8 & 7 & -10 & 14 \\ \hline
         \end{array}\;,\\[8pt]
V_{11} & = & \begin{array}{|c|c|c|c|c|c|c|c|}\hline
1 & -2 & 17 & -6 & -15 & -12 & 8 & 9 \\\hline
-3 & 4 & -16 & 5 & 13 & 14 & -7 & -10 \\ \hline
         \end{array}\;,\\[8pt]
V_{13} & = & \begin{array}{|c|c|c|c|c|c|c|c|}\hline
1 & -2 & 17 & 9 & -7 & 8 & -11 & -15 \\\hline
-3  & 4 & -16 & -10 & 5 & -6 & 12 & 14 \\\hline 
         \end{array}\;,\\[8pt]
V_{15} & = & \begin{array}{|c|c|c|c|c|c|c|c|}\hline
1 & -2 & -5 & 7 & 11 & 14 & -16 & -10 \\ \hline
-3 & 4 & 6 & -8 & -13 & -12 & 17 & 9  \\\hline
         \end{array}\;,\\[8pt]
V_{17} & = & \begin{array}{|c|c|c|c|c|c|c|c|}\hline
1 & -2 &  16 & -12 & 6 & -5 & 10 & -14 \\ \hline
-3 & 4 & -15 & 11 & -8 & 7 & -9 & 13 \\ \hline
        \end{array}\;.
\end{array}$$
\end{footnotesize}
\caption{Shiftable blocks of size $2\times 8$ satisfying condition \eqref{blocchi}.}\label{bl8}
\end{figure}

\begin{prop}\label{G_rho-10}
Let $m$ and $s$ be even integers with $m\geq 2$, $s\geq 10$ and $s\equiv 4 \pmod 6$.
Let $t$ be a divisor of $ms$ and set $\ell=\frac{2ms}{t}+1$.
There exists a sequence $\B$ of $\frac{m}{2}$ blocks of size $2\times s$ such  that $\B$ satisfies 
condition \eqref{blocchi} and
$\supp(\B)=[ 1,ms+\lfloor t/2\rfloor] \setminus \{j\ell: j \in [1, \lfloor t/2 \rfloor]\}$.
\end{prop}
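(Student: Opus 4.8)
The plan is to follow the proof of Proposition~\ref{G_rho-8} verbatim in structure, merely replacing the $2\times 8$ ``tail'' blocks by $2\times 10$ ones. Write $s=6q+10$ with $q\geq 0$, put $\eta=\left\lfloor\frac{6mq}{\ell-1}\right\rfloor$ and $N=6mq+\eta$. By Lemma~\ref{F_rho} there is a sequence $\F=\F_{\frac{mq}{2},\ell}$ of shiftable $2\times 6$ blocks satisfying~\eqref{blocchi} with $\supp(\F)=[1,N]\setminus\{j\ell:j\in[1,\eta]\}$ (when $q=0$ this is empty and $N=\eta=0$). It then suffices to build a sequence $\G$ of $\frac{m}{2}$ shiftable $2\times 10$ blocks satisfying~\eqref{blocchi} with
$$\supp(\G)=\left[N+1,\,ms+\lfloor t/2\rfloor\right]\setminus\{j\ell:j\in[\eta+1,\lfloor t/2\rfloor]\}.$$
Juxtaposing, for each $i\in[1,\frac{m}{2}]$, the $q$ blocks $A_{(i-1)q+1},\ldots,A_{iq}$ of $\F$ with the $i$-th block of $\G$ produces the desired $\B$, exactly as at the end of Proposition~\ref{G_rho-8}; here $\supp(\B)=\supp(\F)\cup\supp(\G)$ is the required set, the two supports being disjoint.

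All the work lies in constructing $\G$, and for this I would first compile a catalogue of shiftable $2\times 10$ blocks — the analogue of Figure~\ref{bl8} — whose two rows each sum to $0$ and whose ten columns obey the paired pattern of~\eqref{blocchi} for one single common column-sum vector, so that (as in Proposition~\ref{G_rho-8}) concatenating any of them keeps the $\sigma_i$'s consistent throughout $\B$. The catalogue would contain ``correction'' blocks $V_1,V_3,\ldots,V_{21}$ with $\supp(V_j)=[1,21]\setminus\{j\}$ — each filling $20$ consecutive integers while omitting exactly one prescribed value — together with the ad hoc blocks $F_5$ and a few $W_{i,j}$ needed to absorb the multiples of $\ell$ for the smallest odd values of $\ell$, plus a separate $F_3$-type block (with its own column-sum vector) reserved for $\ell=3$.

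With the catalogue fixed I would split into the generic case and the small cases. For $\ell\geq 21$, any $21$ consecutive integers contain at most one multiple of $\ell$; setting $U(b)=(V_{21},V_{21}\pm 20,\ldots,V_{21}\pm 20(b-1))$ with $\supp(U(b))=[1,20b]$, one fills $[N+1,\,ms+\lfloor t/2\rfloor]$ window by window, inserting a suitably shifted $V_r$ each time the next multiple $j\ell$ would otherwise be hit (choosing $r$ so that $\supp(V_r)$ skips exactly that multiple) and using $V_{21}$-blocks elsewhere — this is the $\ell\geq 17$ argument of Proposition~\ref{G_rho-8} with $16,17$ replaced by $20,21$. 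For each odd $\ell\in\{3,5,7,9,11,13,15,17,19\}$ I would instead exhibit, for every residue of $N$ (equivalently of $mq$) modulo the relevant period, an explicit basic sequence $S$ whose support is an interval $[1,P]$ (with $P$ a multiple of $\ell$) minus the appropriate translate of $\{\ell,2\ell,\ldots,P\}$, and take $\G$ to be the first $\frac{m}{2}$ blocks of $\con\limits_{c\geq 0}\bigl(S\pm(N+Pc)\bigr)$.

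The main obstacle is precisely this catalogue-plus-case-analysis: one must actually write down $2\times 10$ blocks realising every support $[1,21]\setminus\{j\}$ and the special supports, check for each that both rows vanish and that the columns follow the single common paired pattern, and then verify for every small $\ell$ and every residue of $N\bmod\ell$ that the chosen $S$ deletes exactly the correct multiples. The bookkeeping is somewhat heavier than in Proposition~\ref{G_rho-8}, since the wider blocks raise the generic threshold from $\ell\geq 17$ to $\ell\geq 21$ (adding the cases $\ell=17,19$) and each block must now be balanced in both rows over ten columns; once the blocks are tabulated, however, verifying the supports and sums is entirely routine.
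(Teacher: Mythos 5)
Your proposal is essentially the paper's own proof: the same reduction via Lemma \ref{F_rho} to building a tail sequence $\G$ of $2\times 10$ blocks with support $[N+1,ms+\lfloor t/2\rfloor]$ minus the remaining multiples of $\ell$, the same generic argument for $\ell\geq 21$ using $V_{21}$-chains of length $20$ with a corrective $V_r$ per window, and the same residue-of-$N$ case analysis for the small odd $\ell$ (the paper's Figure \ref{bl10} supplies exactly the catalogue you describe, with all blocks except $F_3$ and the $W_{7,i}$ sharing one column-sum vector). The only content not supplied is the explicit tabulation of those blocks and of the basic sequences $S$ for $\ell\leq 19$, which you correctly identify as the remaining finite, routine verification.
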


\begin{proof}
This proof is very similar to that of Proposition \ref{G_rho-8},
so we can skip some details. Write $s=6q+10$ and  $N=6mq+\eta$, where $\eta=\left\lfloor \frac{6mq}{\ell-1}\right\rfloor$.
Let $\F=\F_{\frac{mq}{2},\ell}$ be the sequence satisfying condition \eqref{blocchi} constructed 
in Lemma \ref{F_rho}. 
We have to construct a sequence $\G$ of shiftable blocks of size $2\times 10$ satisfying condition \eqref{blocchi}
in such a way that $|\G|=\frac{m}{2}$ and 
$$\supp(\G)=[N+1,ms+\lfloor t/2\rfloor]\setminus \{j\ell: j \in [\eta+1,\lfloor t/2 \rfloor ]\}.$$
To this purpose, we use the shiftable blocks of size $2\times 10$ of Figure \ref{bl10}.
Each of these blocks has rows that sum to zero and columns 
with the following sums:
$$\begin{array}{c}
F_3:(-1,1, -3, 3, 6, -6, -3, 3, 3, -3 ),
\qquad F_5: (-2, 2, -2, 2, 1, -1, -2, 2, 2, -2), \\
W_{7,i}: (1, -1, -2, 2, 1, -1, -2, 2, 2, -2), \\
W_{9,i}, W_{13,5}, W_{15,i},  V_j: (-2, 2, -2, 2, 1, -1, -2, 2, 2, -2).
\end{array}$$
Observe that
$$\begin{array}{rcl}
\supp (F_3)&=& [1,30]\setminus \{3,6,9,12,15,18,21,24,27,30\},\\
\supp (F_5)&=&[1,25]\setminus \{5,10,15,20,25\},\\
\supp (W_{7,i})&=&[1,23] \setminus \{i,7+i,14+i\}, \quad  i=3,5,7, \\
 \supp (W_{9,i})&=&[1,22] \setminus \{i,9+i\}, \quad i=5,9,\\
 \supp (W_{13,5})&=&[1,22] \setminus \{5,18\},  \\
 \supp (W_{15,i})&=&[1,22] \setminus \{i,15+i\}, \quad i=3,5,\\
 \supp (V_{j})&=&[1,21] \setminus \{j\}, \quad j=1,3,5,7,9,11,13,15,17,19,21.
\end{array}$$
Furthermore, if $\ell\in \{3,5,7,11, 13,19\}$, then $N\equiv 0 \pmod \ell$.

If $\ell=3$, let $S=(F_3)$ and  $\G=\con\limits_{c=0}^{m/2-1}
 (S\pm (N+24 c))$. Similarly,  if $\ell=5,11$, let $S=(F_5)$ and $S=(V_{11})$ respectively.

If $\ell=7$, let  $S= (W_{7,7}, W_{7,5}\pm 23, W_{7,3}\pm 46  )$.
Note that $\supp(S)=[1, 10\ell]\setminus \{j\ell: j\in [1,10]\}$.
In this case, $\G=\con\limits_{c=0}^{m/6-1} (S\pm (N+10\ell c))$.

If $\ell=13$, let $S=(V_{13}, W_{13,5}\pm 21, V_{9}\pm 43)$.
Note that $\supp(S)=[1, 5\ell]\setminus \{j\ell: j\in [1,5]\}$.
We can take $\G=\con\limits_{c=0}^{m/6-1} (S\pm (N+5\ell c))$. 

If $\ell=19$ let 
$$S=(V_{19}, V_{17}\pm 21, V_{15}\pm 42,
V_{13}\pm 63,V_{11}\pm 84, V_{9}\pm 105, V_{7}\pm 126,
V_{5}\pm 147, V_{3}\pm 168).$$
We have $\supp(S)=[1, 10\ell]\setminus \{j\ell : j\in [1,10]\}$
and $\G=\con\limits_{c=0}^{m/18-1} (S\pm (N+10\ell c))$.

For  $\ell=9$  we have two possibilities.
\begin{itemize}
\item[(1)] If $mq \equiv 0 \pmod 4$, then $N\equiv 0 \pmod \ell$ and we can take
$S= (W_{9,9}, W_{9,5}\pm 22)$. In fact,  $\supp(S)=[1,5\ell]\setminus \{j\ell : j\in [1,5] \}$.
\item[(2)] If $mq \equiv 2 \pmod 4$, then $N\equiv 4\pmod \ell$.
We can take $S=(W_{9,5} , W_{9,9}\pm 23)$, in fact
$\supp(S)=[1,5\ell]\setminus \{j\ell-4 : j\in [1,5]\}$.
\end{itemize}
In both cases, the sequence $\G$ consists of the first $\frac{m}{2}$ elements of $\con\limits_{c=0}^{\lfloor m/4 \rfloor} (S\pm (N+5\ell c))$. 

If $\ell=15$ we have seven possibilities.
\begin{itemize}
 \item[(1)] If $mq\equiv 0 \pmod{14}$ then $N\equiv 0 \pmod \ell$ and we can take
 $S=(V_{15}, V_9\pm 21,  W_{15,3}\pm 42,
 V_{11}\pm 64, W_{15,5}\pm 85, V_{13}\pm 107, V_{7}\pm 128)$.
 In fact,  $\supp(S)=[1,10\ell]\setminus \{j\ell : j\in [1,10] \}$.
\item[(2)] If $mq\equiv 2\pmod{14}$ then $N\equiv 12 \pmod \ell$ and we can take
$S=(W_{15,3}, V_{11}\pm 22,  W_{15,5}\pm 43, V_{13}\pm 65,
V_{7}\pm 86,V_{15}\pm 108, V_{9}\pm 129)$.
In fact,  $\supp(S)=[1,10\ell]\setminus \{j\ell-12: j\in [1,10] \}$.
\item[(3)] If $mq\equiv 4\pmod{14}$ then $N\equiv 10 \pmod \ell$ and we can take
$S=( W_{15,5}, V_{13}\pm 22,  V_7\pm 43, V_{15}\pm 65, V_{9}\pm 86,
W_{15,3}\pm 107, V_{11}\pm 129)$.
In fact,  $\supp(S)=[1,10\ell]\setminus \{j\ell-10 : j\in [1,10] \}$.
\item[(4)] If $mq\equiv 6\pmod{14}$ then $N\equiv 8 \pmod \ell$ and we can take
$S=( V_{7}, V_{15}\pm 22,  V_9\pm 43, W_{15,3}\pm 64, V_{11}\pm 86,
W_{15,5}\pm 107, V_{13}\pm 129)$.
In fact,  $\supp(S)=[1, 10\ell]\setminus \{j\ell-8 : j\in [1,10] \}$.
 \item[(5)] If $mq\equiv 8\pmod{14}$ then $N\equiv 6 \pmod \ell$ and we can take
$S=( V_{9}, W_{15,3}\pm 21,  V_{11}\pm 43, W_{15,5}\pm 64, V_{13}\pm 86,
V_{7}\pm 107, V_{15}\pm 129)$.
In fact,  $\supp(S)=[1,10\ell]\setminus \{j\ell-6 : j\in [1,10] \}$.
 \item[(6)] If $mq\equiv 10\pmod{14}$ then $N\equiv 4 \pmod \ell$ and we can take
$S=(V_{11}, W_{15,5}\pm 21,  V_{13}\pm 43, V_{7}\pm 64, V_{15}\pm 86,
V_{9}\pm 107, W_{15,3}\pm 128)$.
In fact,  $\supp(S)=[1, 10\ell]\setminus \{j\ell-4 : j\in [1,10] \}$.
\item[(7)] If $mq\equiv 12\pmod{14}$ then $N\equiv 2 \pmod \ell$ and we can take
$S=( V_{13}, V_{7}\pm 21,  V_{15}\pm 43, V_{9}\pm 64, W_{15,3}\pm 85,
V_{11}\pm 107, W_{15,5}\pm 128)$.
In fact,  $\supp(S)=[1, 10\ell]\setminus \{j\ell-2 : j\in [1,10] \}$.
\end{itemize}
In all seven cases, the sequence $\G$ consists of the first $\frac{m}{2}$ elements of $\con\limits_{c=0}^{\lfloor m/14 \rfloor} (S\pm (N+10\ell c))$.

If $\ell=17$ we have four possibilities.
\begin{itemize}
\item[(1)] If $mq\equiv 0 \pmod 8$ then $N\equiv 0 \pmod \ell$ and we can take
$S=(V_{17}, V_{13}\pm 21,  V_{9}\pm 42, V_{5}\pm 63)$.
We have $\supp(S)=[1, 5\ell]\setminus \{j \ell: j \in [1,5] \}$.
\item[(2)] If $mq \equiv 2 \pmod 8$ then $N\equiv 12 \pmod \ell$ and we can take
$S=( V_{5}, V_{17}\pm 22,  V_{13}\pm 43,  V_{9}\pm 64)$.
We get $\supp(S)=[1,5\ell]\setminus \{j\ell-12:  j \in [1,5]\}$.
\item[(3)] If $mq \equiv 4 \pmod 8$ then $N\equiv 8 \pmod \ell$ and we can take
$S=(V_{9}, V_{5}\pm 21,  V_{17}\pm 43,  V_{13}\pm 64)$.
We have $\supp(S)=[1, 5\ell]\setminus \{j\ell-8 : j \in [1,5]\}$.
\item[(4)] If $mq \equiv 6 \pmod 8$ then $N\equiv 4 \pmod \ell$ and we can take
$S=(V_{13}, V_{9}\pm 21,  V_{5}\pm 42,  V_{17}\pm 64)$.
We obtain $\supp(S)= [1, 5\ell]\setminus \{j\ell-4 : j \in [1,5]\}$.
\end{itemize}
In all four cases, the sequence $\G$ consists of the first $\frac{m}{2}$ 
elements of $\con\limits_{c=0}^{\lfloor m/8 \rfloor} (S\pm (N+5\ell c))$.

Suppose now that $\ell\geq 21$: in this case, any set of $20$ consecutive integers contains at
most one multiple of $\ell$. For all $b\geq 1$ we define the sequence
$$U(b)=(V_{21},V_{21}\pm 20, V_{21} \pm 40,\ldots, V_{21}\pm 20(b-1)).$$
Also, we set $U(0)$ to be the empty sequence: so, for all $b\geq 0$ the sequence $U(b)$ contains $b$ elements
and $\supp(U(b))=[1,20b]$. 

Write $(\eta+1)\ell-N=20h_0+r_0$, where $0\leq r_0 < 20$, and define the sequence
$$S_0=(U(h_0), V_{r_0}\pm 20h_0 ).$$
Hence, $\supp(S_0\pm N)=[N+1, N+20h_0+21]\setminus \{ (\eta+1)\ell \}$.
For all $j\in [1,\lfloor t/2 \rfloor - \eta]$, write 
$\ell-21+r_{j-1}=20h_j +r_j$, 
where $0\leq  r_j < 20$, and define

$$S_j=\left(U(h_j)\pm\left(21j+20\sum_{i=0}^{j-1} h_i   \right),
V_{r_j}\pm  \left(21j+20\sum_{i=0}^j h_i  \right)\right).$$
Note that $(\eta+j+1)\ell-N=20\sum_{i=0}^{j} h_i+ 21j+ r_j $ and
$$\supp(S_j\pm N)=\left[N+1+21j+20\sum_{i=0}^{j-1} h_i,\;  N+21(j+1)+ 20\sum_{i=0}^j h_i  \right]
\setminus \{(\eta+j+1)\ell\}.$$
The elements of $\G$ are the first $\frac{m}{2}$ blocks in $\con\limits_{c=0}^{\lfloor t/2 \rfloor - \eta} (S_c\pm N)$.

Finally, writing $\F=(A_1,\ldots,A_{\frac{mq}{2}})$ and $\G=(G_1,\ldots,G_{\frac{m}{2}} )$,
for all $i=1,\ldots,\frac{m}{2}$, let $B_i$ be the block of size $2\times s$ obtained by juxtaposing the $q$ blocks 
$$A_{(i-1)q+1},\;A_{(i-1)q+2},\;A_{(i-1)q+3},\; \ldots,A_{iq}$$
and the  block $G_i$.
By construction, the sequence $\B=(B_1,\ldots,B_{\frac{m}{2}})$ satisfies condition \eqref{blocchi}, 
has cardinality $\frac{m}{2}$ and  
$\supp(\B)=\supp(\F)\cup \supp(\G)=[ 1,ms+\lfloor t/2 \rfloor ]
\setminus \{j\ell: j \in [1,\lfloor t/2\rfloor]\}$.
\end{proof}

\begin{figure}
\begin{footnotesize}
$$\begin{array}{rcl}
F_{3} & =& \begin{array}{|c|c|c|c|c|c|c|c|c|c|}\hline
1 & -4 & -10 & 16 & 14 & -17  &  19 & -20 & -25 & 26\\\hline
-2 & 5 & 7 & -13 & -8 &   11 &   -22 & 23 & 28 & -29\\\hline
\end{array}\;,\\[8pt]
F_{5} & =& \begin{array}{|c|c|c|c|c|c|c|c|c|c|}\hline
1 & -2 & 6 & -7 & -11 & 13  &  16  & -17 & -21 & 22\\\hline
-3 & 4 & -8 & 9 & 12 & -14  &  -18 & 19 & 23 & -24 \\\hline
\end{array}\;,\\[8pt]
 W_{7,3} & =& \begin{array}{|c|c|c|c|c|c|c|c|c|c|}\hline
2 & -23 & -15 & 7  & -8 & -12 & 19 & 20  & 16 & -6	\\ \hline
-1 & 22 & 13 & -5 & 9  & 11 & -21 & -18 & -14 & 4 \\ \hline  
\end{array}\;,\\[8pt]
W_{7,5}   & =& \begin{array}{|c|c|c|c|c|c|c|c|c|c|}\hline
2 & -18 & -11 & -13 & 7 & -4 & 21 & 22 & 10 & -16 \\ \hline
-1 & 17 & 9 & 15 & -6 & 3 & -23 & -20 & -8 & 14 \\ \hline
     \end{array}\;,\\[8pt]
W_{7,7}  & =& \begin{array}{|c|c|c|c|c|c|c|c|c|c|}\hline
2 & -23 & -19 & 20 & 16 & 12 & 9 & -3 & -4 & -10 \\ \hline
-1 & 22 & 17 & -18 & -15 & -13 & -11 & 5 & 6 & 8 \\ \hline
   \end{array}\;,\\[8pt] 
W_{9,5}   & =& \begin{array}{|c|c|c|c|c|c|c|c|c|c|}\hline
1 & -2 & 6 & -7 & -10 & 12 &  15 & -16 & -19 & 20 \\\hline 
-3 & 4 & -8 & 9 & 11 & -13 & -17 & 18 & 21 & -22 \\\hline 
   \end{array}\; ,\\[8pt]
   W_{9,9}    & =& \begin{array}{|c|c|c|c|c|c|c|c|c|c|}\hline
	1 & -19 & 20 & 13 & 6 & -8 & 15 & -2 & -14 & -12 \\ \hline
	-3 & 21 & -22 & -11 & -5 & 7 & -17 & 4 & 16 & 10 \\ \hline
      \end{array}\;,\\[8pt]
         W_{13,5}    & =& \begin{array}{|c|c|c|c|c|c|c|c|c|c|}\hline
	1 & -2 & 6 & -7 & -10 & 12 & 14 & -15 & -19 & 20 \\\hline 
 -3 & 4 & -8 & 9 & 11 & -13 & -16 & 17  &  21 & -22 \\\hline
          \end{array}\;,\\[8pt] 
          W_{15,3}   & =& \begin{array}{|c|c|c|c|c|c|c|c|c|c|}\hline
    4 &   -19 &   -13 &    16 &    -1 &    -9 &    20 &    12& 7 &   -17 \\ \hline
  -6 &    21 &    11 &  -14 &     2&    8 &       -22 &  -10 &    -5 &    15 \\\hline 
              \end{array}\; ,\\[8pt]
 W_{15 ,5}    & =& \begin{array}{|c|c|c|c|c|c|c|c|c|c|}\hline
1 & -17 & -14 & -8 & 22 & 6 & 16 & -2 & 11 & -15 \\ \hline
-3 & 19 & 12 & 10 & -21 & -7 & -18 & 4 & -9 & 13 \\ \hline
              \end{array}\;,\\[8pt] 
 V_1 & =& \begin{array}{|c|c|c|c|c|c|c|c|c|c|}\hline
2 & -3 & 6 & -7 & -10 & 12 & 14 & -15 & -18 & 19\\\hline 
-4 & 5 & -8 & 9 & 11 & -13 & -16& 17 & 20 & -21 \\\hline 
   \end{array}\;,\\[8pt]
  V_3 & =& \begin{array}{|c|c|c|c|c|c|c|c|c|c|}\hline
4 & 7 & -13 & 12 & -8 & -2 &  14 & -15 & -18 & 19 \\ \hline
-6 & -5 & 11 & -10 & 9 & 1 &  -16 & 17 & 20 & -21 \\ \hline
   \end{array}\;,\\[8pt]
   V_5 & =& \begin{array}{|c|c|c|c|c|c|c|c|c|c|}\hline
1 & -2 & 6 & -7 & -10 & 12 & 14 & -15 & -18 & 19 \\\hline 
-3 & 4 & -8 & 9 & 11 & -13 & -16 & 17& 20 & -21 \\\hline 
   \end{array}\;,\\[8pt]
V_7 & =& \begin{array}{|c|c|c|c|c|c|c|c|c|c|}\hline
1 & -2 & 8 & 11 & -5 & -13 & 14 & -15 & -18 & 19\\ \hline
-3 & 4 & -10 & -9 & 6 & 12 & -16 & 17 & 20 & -21 \\\hline
   \end{array}\;,\\[8pt]
 V_9 & =& \begin{array}{|c|c|c|c|c|c|c|c|c|c|}\hline
1 & -2 & 5 & -6 & -10 & 12 & 14  & -15 & -18 & 19\\\hline 
-3 & 4 & -7 & 8 & 11 & -13 & -16 &  17 & 20 & -21 \\\hline 
   \end{array}\;,\\[8pt]
V_{11}  & =& \begin{array}{|c|c|c|c|c|c|c|c|c|c|}\hline
1 & -19 & -7 & 16 & -9 & -13 & 18 &  17 & 4 & -8	\\\hline
-3 & 21 & 5 & -14 & 10 & 12 & -20 & -15 & -2 & 6 \\\hline
   \end{array}\;,\\[8pt]
V_{13}  & =& \begin{array}{|c|c|c|c|c|c|c|c|c|c|}\hline
1 & -2 & 5 & -6 & -9 & 11  & 14  & -15 & -18 & 19\\\hline 
-3 & 4 & -7 & 8 & 10 & -12 & -16 & 17 & 20 & -21 \\\hline 
   \end{array}\;,\\[8pt]
V_{15}  & =& \begin{array}{|c|c|c|c|c|c|c|c|c|c|}\hline
1 & -19 & -14 & 20 & 17 & 9 & 6 & -2 & -5 & -13 \\ \hline
-3 & 21 & 12 & -18 & -16 & -10 & -8 & 4 & 7 & 11 \\ \hline
   \end{array}\;,\\[8pt]
V_{17}  & =& \begin{array}{|c|c|c|c|c|c|c|c|c|c|}\hline
1 & -2 & 5 & -6 & -9 & 11 & 13  & -14 & -18 & 19\\\hline 
-3 & 4 & -7 & 8 & 10 & -12 & -15 & 16 & 20 & -21 \\\hline 
   \end{array}\;,\\[8pt]
V_{19}  & =& \begin{array}{|c|c|c|c|c|c|c|c|c|c|}\hline
1 & -16 & -17 & 14 & 21 & 5 & -9 & -2 & -8 & 11 \\ \hline
-3 & 18 & 15 & -12 & -20 & -6 & 7 & 4 & 10 & -13 \\ \hline
   \end{array}\; ,\\[8pt]
V_{21}   & =& \begin{array}{|c|c|c|c|c|c|c|c|c|c|}\hline
1 & -2 & 5 & -6 & -9 & 11 & 13  & -14 & -17 & 18\\\hline 
-3 & 4 & -7 & 8 & 10 & -12 & -15 & 16 & 19 & -20 \\\hline 
   \end{array}\;.
            \end{array}$$
\end{footnotesize}       
\caption{Shiftable blocks of size $2\times 10$ satisfying condition \eqref{blocchi}.}\label{bl10}
\end{figure}

\begin{cor}\label{seqB}
Let $m$ and $s$ be even integers with $m\geq 2$, $s\geq 6$ and $s \equiv 2 \pmod 4$.
Let $t$ be a divisor of $2ms$ and set $\ell=\frac{2ms}{t}+1$.
There exists a sequence $\B$ of cardinality $\frac{m}{2}$ such that $\B$ consists 
of blocks of size
$2\times s$, satisfies condition \eqref{blocchi} and 
$\supp(\B)=\left[ 1,ms+\left\lfloor t/2
\right\rfloor \right]
\setminus \left\{j\ell: j \in \left[1, \left\lfloor t/2 \right\rfloor \right]\right\}$.
\end{cor}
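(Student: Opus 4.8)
The plan is to derive Corollary \ref{seqB} by assembling the constructions of the preceding results, organising the argument according to the parity of $\ell=\frac{2ms}{t}+1$. The first observation I would record is the clean dichotomy governing this parity: since $t\mid 2ms$, for every odd prime $q$ one has $v_q(2ms)=v_q(ms)$, so any failure of $t$ to divide $ms$ can come only from the prime $2$. Hence $\ell$ is odd exactly when $t\mid ms$, while $\ell$ is even exactly when $t\mid 2ms$ but $t\nmid ms$, in which case $v_2(t)=v_2(ms)+1$ and $\frac{2ms}{t}$ is an odd integer.

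In the case $\ell$ odd (that is, $t\mid ms$) I would simply split on the residue of $s$ modulo $6$; as $s$ is even, this residue lies in $\{0,2,4\}$, and in each case the corresponding proposition applies directly. Using $s\geq 6$ and $s\equiv 2\pmod 4$, a quick check of the simultaneous congruences shows that $s\equiv 0\pmod 6$ gives $s\equiv 6\pmod{12}$ (so $s\geq 6$), that $s\equiv 2\pmod 6$ gives $s\equiv 2\pmod{12}$ (so $s\geq 14$), and that $s\equiv 4\pmod 6$ gives $s\equiv 10\pmod{12}$ (so $s\geq 10$); thus the minimal-size hypotheses of Propositions \ref{G_rho-6}, \ref{G_rho-8} and \ref{G_rho-10} are all met. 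Each of these produces a sequence $\B$ of $\frac{m}{2}$ blocks of size $2\times s$ satisfying \eqref{blocchi} with exactly the required support, which is precisely the claim.

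In the case $\ell$ even ($t\nmid ms$, $t\mid 2ms$) I would first show $8\mid t$. Writing $v_2(t)=v_2(ms)+1$ and using $s\equiv 2\pmod 4$ (so $v_2(s)=1$) together with $m$ even (so $v_2(m)\geq 1$), one gets $v_2(t)=v_2(m)+v_2(s)+1\geq 3$, whence $8\mid t$; in particular $\lfloor t/2\rfloor=t/2$, so the target support matches the one delivered by Lemmas \ref{8p} and \ref{non 8p}. Since $s\equiv 2\pmod 4$ and $s\geq 6$, the integer $s/2$ is odd and at least $3$, so I may fix an odd prime $p\mid s$. I would then split on whether $p\mid t$: if $p\mid t$, then (as $\gcd(8,p)=1$) $8p\mid t$ and Lemma \ref{8p} applies; if $p\nmid t$, a prime-by-prime comparison of valuations---$v_q(t)\leq v_q(2ms)=v_q(2ms/p)$ for $q\neq p$, and $v_p(t)=0\leq v_p(2ms)-1=v_p(2ms/p)$ since $p\mid s$---shows $t\mid \frac{2ms}{p}$, so Lemma \ref{non 8p} applies. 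Either way we obtain the desired sequence $\B$.

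The only genuinely delicate point is the bookkeeping in the $\ell$-even case: I must be sure the two subcases $p\mid t$ and $p\nmid t$ are exhaustive and that each really meets the divisibility hypothesis of its lemma. The decisive fact making this work is that the defect between $t$ and $ms$ is purely $2$-adic, so moving an odd prime $p$ of $s$ out of the modulus (passing to $2ms/p$) never disturbs the crucial condition $8\mid t$. With the parity dichotomy and this valuation argument in hand, the corollary follows immediately by invoking the appropriate earlier result in each branch.
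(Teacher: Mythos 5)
Your proposal is correct and follows essentially the same route as the paper: the case $t\mid ms$ is handled by Propositions \ref{G_rho-6}, \ref{G_rho-8} and \ref{G_rho-10} according to $s$ modulo $6$, and the case $t\nmid ms$ is split between Lemmas \ref{8p} and \ref{non 8p} via the same divisibility dichotomy (the paper keys the split on whether $t\mid \frac{2ms}{p}$, you key it on whether $p\mid t$, but these are contrapositive formulations of the same fact). Your $2$-adic valuation argument for $8\mid t$ and the verification of the size hypotheses are details the paper leaves implicit, and they check out.
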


\begin{proof}
By the assumptions on $s$, we can write $s=2ph$, for a suitable odd prime $p$ and a positive integer $h$.
If $t$ divides $ms$, the result follows from Propositions \ref{G_rho-6}, \ref{G_rho-8} and \ref{G_rho-10}, 
depending on the residue class of $s$ modulo $6$.
It $t$ does not divide $ms$, then $t\equiv 0 \pmod 8$.
If $t$ divides $4mh=\frac{2ms}{p}$, then the  statement follows from Lemma \ref{non 8p}.
If $t$ does not divide $4mh$, then  $t$ is divisible by $p$ and so we can apply Lemma \ref{8p}.
\end{proof}

Now, we arrange the blocks so far constructed, proving the following.

\begin{prop}\label{s2}
Suppose  $4\leq s \leq n$, $4\leq k \leq m$ and $ms=nk$.
Let $t$ be a divisor of $2ms$. 
\begin{itemize}
\item[(1)] If $s\equiv 2\pmod 4$ and $k\equiv 0 \pmod 4$, then there exists an integer $\H_t(m,n;s,k)$ if and only if 
$m$ is even.
\item[(2)] If $s\equiv 0\pmod 4$ and $k\equiv 2 \pmod 4$, then there exists an integer $\H_t(m,n;s,k)$ if and only if 
$n$ is even.
 \end{itemize}
\end{prop}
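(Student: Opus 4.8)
The plan is to prove (1) directly and then obtain (2) for free by transposition: the transpose of an integer $\H_t(m,n;s,k)$ is an integer $\H_t(n,m;k,s)$, so (2) for $(m,n,s,k)$ is precisely (1) applied to the transposed parameters $(n,m,k,s)$, with the two parity hypotheses and the roles of $m$ and $n$ swapped; in particular ``$n$ is even'' in (2) corresponds to ``$m$ is even'' in (1). For the necessity in (1) I would argue purely arithmetically: since $k\equiv 0\pmod 4$ and $ms=nk$ we get $4\mid ms$, and writing $s=2\sigma$ with $\sigma$ odd (possible as $s\equiv 2\pmod 4$), the identity $ms=2m\sigma$ forces $2\mid m$. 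Thus the existence of the array forces $m$ even, which is the stated condition.

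For sufficiency, assume $m$ even. The first step is to quote Corollary~\ref{seqB}: it provides a sequence $\B=(B_1,\dots,B_{m/2})$ of shiftable $2\times s$ blocks obeying \eqref{blocchi} whose combined support is exactly $[1,ms+\lfloor t/2\rfloor]\setminus\{j\ell:j\in[1,\lfloor t/2\rfloor]\}$, i.e.\ the support demanded of an integer $\H_t(m,n;s,k)$. It then remains to arrange these $m/2$ blocks inside an empty $m\times n$ array $A$, along the lines of the arrays $P(\B)$. Each block is inserted with its two rows occupying two distinct rows of $A$ but the same $s$ consecutive columns (read modulo $n$), and I would let the $m$ block rows fill the $m$ rows of $A$ bijectively. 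Two of the three defining properties then come essentially for free: every row of $A$ carries exactly $s$ entries, and by $\tau_1(B)=\tau_2(B)=0$ every row sum vanishes; moreover the support of $A$ is inherited unchanged from $\B$. What has to be engineered is the behaviour of the columns, and this is done, as for $P(\B)$, by prescribing the common starting column of each block through a cyclic/diagonal offset rule: one wants each of the $n$ columns to lie in the range of exactly $k/2$ blocks, so that it contains $k$ entries, while the positions at which those blocks meet it occur in $+\sigma_i,-\sigma_i$ pairs, so that $\gamma_{2i-1}(B)=-\gamma_{2i}(B)$ makes the column sum vanish.

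The hard part is precisely this choice of offsets, because the two column requirements pull against each other. Matching the cell count $k$ in every column is only the easy uniform-covering problem of placing $m/2$ arcs of length $s$ on $\Z_n$ so that each point is covered $k/2$ times. Making the signed sums vanish at the same time is far more rigid: an equally spaced placement gets the counts right but leaves the $\sigma_i$-contributions unpaired, so the offsets must be locked to the internal pairing of the blocks. Since $P(\B)$ itself produces column count $2s$, I expect the argument to split according to how $k$ compares with $2s$ (equivalently, whether $n\le m/2$): when $2s\mid k$ one can stack $k/(2s)$ offset-$1$ circulant layers of $P$-type, so that each column receives complete, automatically balanced position sets, whereas when $2s\nmid k$ no column can be filled by complete position sets and one must spread the blocks over the wider array while still arranging the offsets so that the surviving $+\sigma_i/-\sigma_i$ contributions cancel. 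Carrying this out uniformly in all regimes — and checking that distinct block rows never collide — is the main obstacle; everything else (row counts, row sums, and the support) is immediate from Corollary~\ref{seqB} and \eqref{blocchi}.
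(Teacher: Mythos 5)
Your outer reductions coincide with the paper's: part (2) is obtained from part (1) by transposition, the necessity of ``$m$ even'' follows by elementary arithmetic from $ms=nk$ with $s\equiv 2\pmod 4$ and $k\equiv 0\pmod 4$, and sufficiency starts from the sequence $\B$ supplied by Corollary \ref{seqB}. But the heart of the matter --- actually placing the $m/2$ blocks in an $m\times n$ array so that every column receives $k$ filled cells \emph{and} sums to zero --- is exactly the step you leave open, and your closing sentence concedes as much. Your plan is to insert each $2\times s$ block whole, into $s$ consecutive columns, and then to hunt for starting offsets making the $\pm\sigma_i$ contributions cancel columnwise, with a case split on whether $2s$ divides $k$; no such system of offsets is exhibited outside the easy divisible case, and the tension you describe between the covering count and the signed cancellation is real. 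As written, this is a description of the obstacle rather than a proof.

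The paper dissolves the obstacle by not keeping the blocks whole. Setting $d=\gcd(m/2,n)$, $m=2\bar m d$, $n=d\bar n$ and $a=sd/n=s/\bar n$ --- an \emph{even} integer, which is precisely where $k\equiv 0\pmod 4$ enters --- each $B_h$ is cut into $\bar n$ sub-blocks $T_1(B_h),\ldots,T_{\bar n}(B_h)$ of size $2\times a$, and for each $(i,j)\in[1,\bar m]\times[1,\bar n]$ the $d$ sub-blocks $T_j(B_{(i-1)d+1}),\ldots,T_j(B_{id})$ are assembled into one circulant array $P$ of size $2d\times d$ as defined at the start of Section \ref{s2k0}; these $P$'s tile the $m\times n$ array. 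Because all blocks of $\B$ share the \emph{same} column-sum profile $(\sigma_1,-\sigma_1,\sigma_2,-\sigma_2,\ldots)$ --- condition \eqref{blocchi2}, inherited by each sequence of sub-blocks --- every column of every $P$ collects, from $a$ cyclically consecutive sub-blocks, one column in each internal position $1,\ldots,a$, so its sum is $\sum_u(\sigma_u-\sigma_u)=0$ with no offsets to choose; the counts come out automatically as $a\bar n=s$ per row and $2a\bar m=k$ per column, and the only feasibility constraint is $a\le d$, i.e.\ $s\le n$. The idea missing from your write-up is this subdivision into width-$a$ pieces, which converts your coupled covering-and-cancellation problem into a purely mechanical assembly.
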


\begin{proof}
(1) By Proposition \ref{prop:necc} we only have to prove the existence of an integer $\H_t(m,n;s,k)$ when 
$m$ is even. So, let $\B$ be the sequence obtained in Corollary \ref{seqB}.
Set $d=\gcd(\frac{m}{2}, n)$ and $a=\frac{sd}{n}$.
Note that $a$ is even integer. In fact, write $m =2 \bar m d $ and $n=d \bar n$. Since $k \equiv 0 \pmod 4$, 
from  $\frac{s}{2}\cdot \frac{m}{2}=n\frac{k}{4}$ we obtain that $\bar n$ divides $\frac{s}{2}$.

Given a block $B_h \in \B$, define for every $j\in [1,\bar n]$ the block
$T_j(B_h)$ of size $2\times a$ consisting of the columns $C_i$ of $B_h$ with $i\in [a(j-1)+1,a j]$.
So, the block $B_h$ of size $2\times s$ is obtained juxtaposing the blocks $T_1(B_h), T_2(B_h),\ldots,T_{\bar n}(B_h)$.
Furthermore, for all $i \in [1,\bar m]$ and all $j \in [1,\bar n]$, each of the sequences 
$$\left(T_j(B_{(i-1)d+1}),T_j(B_{(i-1)d+2}),\ldots,T_j(B_{id}) \right),$$
of cardinality $d$, satisfies condition \eqref{blocchi2}.

Let $A$ be an empty array of size $\bar m\times \bar n$. For every $i \in [1,\bar m]$ and  $j \in [1,\bar n]$,
replace the cell $(i,j)$ of $A$ 
with the block $P\left(T_j(B_{(i-1)d+1}),T_j(B_{(i-1)d+2}),\ldots,T_j(B_{id}) \right)$ according to the definition of 
page 
\pageref{bloccoP}.
Note that, for all $r\in [1,\frac{m}{2}]$, we have $\tau_r(A)=\tau_1(B_r)=0$ and  $\tau_{r+\frac{m}{2}}(A)=\tau_2(B_r)=0$.

By construction, $A$ is a p.f. array of size $m\times n$, its support coincides with $\supp(\B)$ and its rows and columns sum to zero.
Furthermore, each row contains  $a\bar n=s$ elements and each column contains $2a\bar m=k$ elements.
We conclude that $A$ is an integer $\H_t(m,n;s,k)$.\\
(2) It follows from (1). In fact, if $s\equiv 0 \pmod 4$ and $k\equiv 2\pmod 4$,
an integer $\H_t(m,n;s,k)$ can be obtained simply by taking the transpose of an integer $\H_t(n,m;k,s)$.
\end{proof}

We exhibit an integer $\H_{12}( 20, 15;  6, 8)$ and an integer $\H_{5}(6, 15;  10, 4)$  in Figure \ref{big}.
To help the reader, we highlighted the subarray $P(T_1(B_1),\ldots,T_1(B_5))$ in the first case, and the 
subarray $P(T_1(B_1),T_1(B_2),T_1(B_3))$ in the second case.

\begin{figure}[ht]
 \rotatebox{90}{
\begin{footnotesize}
$\begin{array}{|c|c|c|c|c |c|c|c|c|c| c|c|c|c|c|}\hline
   \omb   1&   \omb -2& \omb  &   \omb& \omb  &    5&    -6&    &   &   &   -9&    11&    &   &     \\\hline
   \omb   & \omb  13&  \omb -14&  \omb  & \omb  &   &   17&   -18&    &   &   &  -22&    24&    &     \\\hline
 \omb    & \omb  &  \omb 26&\omb   -27& \omb   &   &   &   30&   -31&    &   &   &  -34&    36&      \\\hline
   \omb   &  \omb&  \omb &  \omb 38&  \omb -39&    &   &   &   43&   -44&    &   &   &  -47&    49 \\\hline
   \omb  -52&\omb   &  \omb &\omb   & \omb  51&   -56&    &   &   &   55&    61&    &   &   &  -59 \\\hline
    \omb  -3& \omb    4&    \omb&  \omb & \omb &   -7&     8&    &   &   &   10&   -12&    &   &     \\\hline
  \omb    & \omb -15& \omb   16& \omb  & \omb  &   &  -19&    20&    &   &   &   23&   -25&    &     \\\hline
   \omb   &\omb   &  \omb-28&  \omb  29&   \omb &   &   &  -32&    33&    &   &   &   35&   -37&      \\\hline
   \omb   & \omb  & \omb  & \omb -40& \omb   41&    &   &   &  -45&    46&    &   &   &   48&   -50 \\\hline
    \omb  54& \omb   &  \omb &   \omb& \omb -53&    58&    &   &   &  -57&   -62&    &   &   &   60 \\\hline
      64&   -65&    &   &   &   68&   -69&    &   &   &  -72&    74&    &   &     \\\hline
      &   76&   -77&    &   &   &   80&   -81&    &   &   &  -85&    87&    &    \\\hline
      &   &   89&   -90&    &   &   &   93&   -94&    &   &   &  -97&    99&     \\\hline
      &   &   &  101&  -102&    &   &   &  106&  -107&    &   &   & -110&   112 \\\hline
    -115&    &   &   &  114&  -119&    &   &   &  118&   124&    &   &   & -122 \\\hline
     -66&    67&    &   &   &  -70&    71&    &   &   &   73&   -75&    &   &     \\\hline
      &  -78&    79&    &   &   &  -82&    83&    &   &   &   86&   -88&    &     \\\hline
      &   &  -91&    92&    &   &   &  -95&    96&    &   &   &   98&  -100&      \\\hline
      &   &   & -103&   104&    &   &   & -108&   109&    &   &   &  111&  -113 \\\hline
     117&    &   &   & -116&   121&    &   &   & -120&  -125&    &   &   &  123\\\hline
\end{array}$
\end{footnotesize} }
\qquad
\rotatebox{90}{
 \begin{footnotesize}
 $\begin{array}{|c|c|c|c|c| c|c|c|c|c| c|c|c|c|c|}\hline
   \omb 1&  \omb -2& \omb  &    5&   -6&   &   -9&   11&   &   13&  -14&   &  -17&   18&     \\ \hline
 \omb   & \omb  21&\omb  -22&   &   26&  -27&   &  -30&   32&   &   34&  -35&   &  -38&   39 \\\hline
\omb   -43&\omb   & \omb  42&  -47&   &   46&   53&   &  -51&  -56&   &   55&   60&   &  -59 \\\hline
   \omb -3& \omb   4&\omb   &   -7&    8&   &   10&  -12&   &  -15&   16&   &   19&  -20&    \\\hline
 \omb   &  \omb -23&\omb   24&   &  -28&   29&   &   31&  -33&   &  -36&   37&   &   40&  -41 \\\hline
    \omb 45&   \omb&\omb -44&   49&   &  -48&  -54&   &   52&   58&   &  -57&  -62&   &   61 \\\hline
 \end{array}$
\end{footnotesize}
 }
 \caption{An integer $\H_{12}( 20, 15;  6, 8)$ (on the left) and an integer $\H_{5}(6, 15;$ $ 10, 4)$ (on the right).} 
\label{big}
 \end{figure}

 \section{The case  $s,k\equiv 2 \pmod 4$}\label{s2k2}

The last case we consider is when $s,k\equiv 2\pmod 4$.
We also assume that $m$ is even: this means that  also $n$ must be even.
We start constructing sequences $\B$  satisfying the following property:
\begin{equation}\label{blocchiOLD}
\begin{array}{l}
\textrm{fixed } 2b \textrm{ integers } \sigma_1,\ldots,\sigma_{2b} \textrm{ such that }
\sum\limits_{i=1}^b \sigma_{2i-1} = \sum\limits_{i=1}^b \sigma_{2i} =0,\textrm{ the elements of } \B \\
\textrm{are shiftable blocks } B \textrm{ of size } 2\times 2b \textrm{ such that }
\tau_1(B)=\tau_2(B)=0 \textrm{ and } \gamma_{i}(B)=\sigma_i \\\textrm{for all } i \in [1,2b].
\end{array}
\end{equation}

\begin{lem}\label{G_rho-6-OLD}
Let $m$ and $s$ be even integers with $m\geq 2$, $s\geq 6$ and $s\equiv 2 \pmod 4$.
Let $t$ be a divisor of $2ms$ and set $\ell=\frac{2ms}{t}+1$.
There exists a sequence $\B'$ of $\frac{m}{2}$ shiftable blocks of size $2\times s$ such  that $\B'$ satisfies 
condition \eqref{blocchiOLD} and
$\supp(\B')=\left[ 1,ms+\left\lfloor t/2
\right\rfloor \right]
\setminus \left\{j\ell: j \in \left[1, \left\lfloor t/2 \right\rfloor \right]\right\}$.
\end{lem}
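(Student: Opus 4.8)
The plan is to re-use, essentially verbatim, the architecture developed in Section \ref{s2k0} for the case $s\equiv 2$, $k\equiv 0\pmod 4$. Indeed, the target of Lemma \ref{G_rho-6-OLD} — a sequence of $\frac m2$ shiftable $2\times s$ blocks with $\tau_1=\tau_2=0$ and support $[1,ms+\lfloor t/2\rfloor]\setminus\{j\ell:j\in[1,\lfloor t/2\rfloor]\}$ — differs from the output of Corollary \ref{seqB} \emph{only} in the column-sum requirement. There we imposed \eqref{blocchi}, i.e.\ $\gamma_{2i-1}(B)=-\gamma_{2i}(B)$; now \eqref{blocchiOLD} asks instead that the odd-indexed and the even-indexed column sums each vanish, $\sum_i\gamma_{2i-1}(B)=\sum_i\gamma_{2i}(B)=0$. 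The required support, the cardinality $\frac m2$, the vanishing of both row sums, and shiftability are all identical to Corollary \ref{seqB}. Hence the case analysis on the residue of $s$ modulo $6$ and on the divisibility of $t$ (Propositions \ref{G_rho-6}, \ref{G_rho-8}, \ref{G_rho-10} together with Lemmas \ref{8p}, \ref{non 8p}), and the final assembly of Corollary \ref{seqB}, can be reproduced unchanged — provided the elementary building blocks now satisfy \eqref{blocchiOLD}.

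The point is that the two structural operations on which that bookkeeping rests both preserve condition \eqref{blocchiOLD}. First, the shift $B\mapsto B\pm x$ leaves every column sum unchanged, since each column of a shiftable two-row block contains exactly one positive and one negative entry; so it preserves the two partial sums. Second, the juxtaposition of blocks of \emph{even} width preserves the parity of the column indices, so if two blocks satisfy \eqref{blocchiOLD} then so does their juxtaposition; and all the elementary blocks occurring here have even width. Consequently, once the elementary blocks satisfy \eqref{blocchiOLD}, every $V\pm x$ and every assembled block $B_i$ satisfies \eqref{blocchiOLD} as well, with no change whatsoever to the support computations.

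Thus the whole proof reduces to a single task: replace each elementary block used in Section \ref{s2k0} (the $2\times4$, $2\times6$, $2\times8$, $2\times10$ arrays $W_4,W_6,F_3,F_5,V_j,W_{i,j},\dots$) by a block with the \emph{same} support, still shiftable with zero row sums, but now whose column sums split into two zero-sum halves. A few already qualify — $W_4$ has column sums $(-2\ell,2\ell,2\ell,-2\ell)$, and the $2\times8$ block $F_3$ of Figure \ref{bl8} has column sums $(-3,3,3,-3,-3,3,3,-3)$ — but most do not, since a block satisfying \eqref{blocchi} meets \eqref{blocchiOLD} exactly when $\sum_i\sigma_i=0$, which typically fails (for example $W_6$ has $\sum_i\sigma_i=-3\ell$, and the $F_3$ of Figure \ref{bl6} has $\sum_i\sigma_i=2$). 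I would therefore exhibit, in new figures organised exactly as Figures \ref{blW1}, \ref{blW2}, \ref{bl6}, \ref{bl8}, \ref{bl10}, replacement blocks of each required type, and then run the arguments of Propositions \ref{G_rho-6}, \ref{G_rho-8}, \ref{G_rho-10} and Lemmas \ref{8p}, \ref{non 8p} word for word to produce $\B'$.

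The main obstacle is precisely this re-engineering of the elementary blocks, and it cannot be done by merely permuting the columns of the old ones: the multiset of column sums $\{\pm\sigma_1,\dots,\pm\sigma_b\}$ of a \eqref{blocchi}-block generally cannot be partitioned into two equal-size zero-sum halves — already $\{\pm1,\pm2,\pm4\}$ admits no zero-sum triple — so genuinely new small arrays must be produced. The delicate ones are the analogues of the ``single excluded residue'' blocks $V_j$ and the ``two excluded residues'' blocks $W_{i,j}$, which must simultaneously realise the prescribed support pattern, keep both row sums zero and remain shiftable, and now balance the odd- and even-indexed column sums separately. Once this full catalogue is assembled and checked, the remainder of the argument is the verbatim rerun described above.
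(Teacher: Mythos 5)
Your reduction is exactly the one the paper uses: the proof of Lemma \ref{G_rho-6-OLD} in the paper consists precisely of re-running Lemmas \ref{8p}, \ref{non 8p}, \ref{F_rho} and Propositions \ref{G_rho-6}, \ref{G_rho-8}, \ref{G_rho-10} after swapping the elementary blocks for new ones satisfying \eqref{blocchiOLD}, and your two structural observations (that $B\mapsto B\pm x$ preserves all column sums of a shiftable two-row block, and that juxtaposition of even-width blocks preserves the odd/even splitting of column indices) are correct and are exactly what makes that rerun legitimate. Your diagnosis of which old blocks survive is also accurate: $W_4$ of Figure \ref{blW1} is reused verbatim in Figure \ref{blW3}, while $W_6$ and most of the $2\times 6$, $2\times 8$, $2\times 10$ blocks must be rebuilt, and your remark that a \eqref{blocchi}-block satisfies \eqref{blocchiOLD} iff $\sum_i\sigma_i=0$ (together with the $\{\pm1,\pm2,\pm4\}$ example showing column permutation cannot always fix this) is correct.

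The gap is that you stop exactly where the content of the lemma begins. The statement is a pure existence-by-construction claim, and its entire substance is the explicit catalogue of replacement blocks --- the paper's Figures \ref{bl6-2}, \ref{bl8-2}, \ref{bl10-2}, \ref{blW3} and \ref{blW4} --- which you announce but do not produce. This is not a routine omission: for each width one must find, for \emph{every} required support pattern (e.g.\ $[1,21]\setminus\{j\}$ for all odd $j\in[1,21]$, the patterns $[1,22]\setminus\{i,9+i\}$, etc.), a shiftable block with zero row sums whose column-sum vector not only splits into two zero-sum halves but is \emph{the same fixed vector} for all blocks that get mixed in one sequence (condition \eqref{blocchiOLD} fixes a single tuple $\sigma_1,\dots,\sigma_{2b}$ for the whole sequence $\B'$; the paper arranges, e.g., all its new $2\times 10$ blocks $W_{j,i},V_j$ to share the vector $(-1,4,-1,-2,2,-2,-2,2,2,-2)$). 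You mention balancing the two halves but not this shared-vector constraint, which further restricts the search. Until that finite but genuinely nontrivial catalogue is exhibited and verified, the existence of $\B'$ has not been established.
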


\begin{proof}
Suppose first that $t$ divides $ms$.
If $s\equiv 0\pmod 6$, consider the shiftable blocks of size $2\times 6$ given in Figure \ref{bl6-2}.
We can repeat the same proofs of Lemma \ref{F_rho} and Proposition \ref{G_rho-6}, using the blocks
of Figure \ref{bl6-2} instead of those of Figure \ref{bl6}.
In fact, each of these  new blocks satisfies \eqref{blocchiOLD} and has  the  following column sums:
$$F_i:\; (-1,3,-6,3,7,-6),\qquad V_j: \; (-1, 4, -1, -2, 2, -2).$$
Furthermore, the new blocks have the same support of the old ones.

The same can be done for $s\equiv 2 \pmod 6$, using the blocks of Figure \ref{bl8-2} and the proof of Proposition
\ref{G_rho-8}. Each of these new blocks satisfies \eqref{blocchiOLD} and  has the following column sums: 
$(-1,-8,-12,4,10,$ $ -1,3,5)$.
Finally, for $s\equiv 4\pmod 6$ we can repeat the proof of Proposition \ref{G_rho-10} using the blocks of Figure \ref{bl10-2}.
Each of these new blocks satisfies \eqref{blocchiOLD} and  has the following column
sums:
$$F_i=(-1,-1,1,7,6,3,-3,-2,-3,-7), \quad W_{j,i},V_j=(-1,4,-1,-2,2,-2,-2,2,2,-2).$$

Suppose now that $t$ does not divide $ms$.
In this case, repeat the proofs of Lemmas \ref{8p} and \ref{non 8p},
using the blocks of Figures \ref{blW3} and \ref{blW4} instead of those of Figures \ref{blW1}  and \ref{blW2}, 
respectively. Note that the blocks of Figure \ref{blW3} satisfy property \eqref{blocchiOLD} with column sums
 $(-2\ell, 2\ell, 2\ell, -2\ell)$ and $(-\ell, 4\ell, -\ell,$ $-2\ell, 2\ell, -2\ell)$.
The blocks $W_4,W_6$ of Figure \ref{blW4} satisfy property \eqref{blocchiOLD} with column sums $(-y,y,y,-y)$
 and $(-y, (p-2)y+1,2y,-(2p-1)y-2,-y,(p+1)y+1 )$,  respectively.

 In all these cases, we obtain a sequence $\B'$ that satisfies \eqref{blocchiOLD} and has the required support.
\end{proof}

\begin{figure}[hpt]
\begin{footnotesize}
  $$\begin{array}{rclrcl}
F_{3} & =& \begin{array}{|c|c|c|c|c|c|}\hline
1 & -4 & -11 & 16 & -10 & 8\\\hline
-2 & 7 & 5 & -13 & 17 & -14\\\hline
\end{array}\;, & 
F_5 & = &
\begin{array}{|c|c|c|c|c|c|}\hline
 1 &  -3 &  7 & -9 & -4 & 8\\\hline
-2 & 6 & -13 & 12& 11& -14 \\\hline
\end{array}\;.   \\[8pt]
V_1& =& \begin{array}{|c|c|c|c|c|c|c|}\hline
2 & -4 & -6 & -12 & 9 & 11 \\\hline
-3 & 8 & 5 & 10 & -7 & -13\\\hline
\end{array}\;, &
V_3& =& \begin{array}{|c|c|c|c|c|c|c|}\hline
1 & -9 & 11 & -7 & -4 & 8\\\hline
-2 & 13 & -12 & 5 & 6 &  -10\\\hline
\end{array}\;,\\[8pt]
V_5 & =& \begin{array}{|c|c|c|c|c|c|}\hline
-13 & 11 & 9 & -8 & -1 & 2\\\hline
12 & -7 & -10 & 6 & 3 & -4\\\hline
    \end{array}\;,  &
V_7 & =& \begin{array}{|c|c|c|c|c|c|}\hline
1 & 12 & 9 & -5 & -4 & -13 \\\hline
-2 & -8 & -10 & 3 & 6 & 11\\\hline
\end{array}\;,\\[8pt]
V_9 & =& \begin{array}{|c|c|c|c|c|c|}\hline
1 & -3 & -5 & 6 & 13 & -12\\\hline
-2 & 7 & 4 & -8 & -11 & 10 \\\hline
    \end{array}\;, &
 V_{11} & =& \begin{array}{|c|c|c|c|c|c|}\hline
 1 & -9 & 7 & -6 & -3 & 10\\\hline
-2 & 13 & -8 & 4 & 5 & -12\\\hline
    \end{array}\;,\\[8pt]
V_{13}& =& \begin{array}{|c|c|c|c|c|c|c|}\hline
1 & -3 & -5 & 6  & 12 & -11\\\hline
-2 & 7 & 4 &  -8 & -10 & 9\\\hline
\end{array}\;.
\end{array}$$
\end{footnotesize}
\caption{Shiftable blocks of size $2\times 6$ satisfying condition \eqref{blocchiOLD}.}\label{bl6-2}
\end{figure}

\begin{figure}[hpt]
\begin{footnotesize}
 $$\begin{array}{rcl}
F_{3} & =& \begin{array}{|c|c|c|c|c|c|c|c|}\hline
     1 &   11 &  -17 &  -16 &   14 &   22 &   -7 &   -8 \\\hline
    -2 &  -19 &    5 &   20 &   -4 &  -23 &   10 &   13 \\\hline
\end{array}\;,\\[8pt]
F_5 & = & \begin{array}{|c|c|c|c|c|c|c|c|}\hline
       1 &    3 &  -19 &   16 &   -4 &   17 &   -6 &   -8 \\\hline
      -2 &  -11 &    7 &  -12 &   14 &  -18 &    9 &   13 \\\hline
      \end{array}\;,\\[8pt]
      W_{7,3} & =& \begin{array}{|c|c|c|c|c|c|c|c|}\hline
       1 &  8 &-18 & -9 & 15 &-12 & -4 & 19 \\\hline
      -2 &-16 &  6 & 13 & -5 & 11 &  7 &-14 \\\hline
         \end{array}\;,\\[8pt]
W_{7,5} & = & \begin{array}{|c|c|c|c|c|c|c|c|}\hline
       1 &  9 &-16 & 18 & 13 & -7 & -8 &-10 \\\hline
      -2 &-17 &  4 &-14 & -3 &  6 & 11 & 15 \\\hline
         \end{array}\;,\\[8pt]
W_{7,7}  & = & \begin{array}{|c|c|c|c|c|c|c|c|}\hline
       1 &  3 &-17 & -4 & -6 &-10 & 15 & 18 \\\hline
      -2 &-11 &  5 &  8 & 16 &  9 &-12 &-13\\\hline
         \end{array}\;,\\[8pt]
W_{9,5} & =& \begin{array}{|c|c|c|c|c|c|c|c|}\hline
     1 &-17 &-15 & 12 & 16 &-11 & -4 & 18\\\hline
    -2 &  9 &  3 & -8 & -6 & 10 &  7 &-13\\\hline
               \end{array}\;,\\[8pt]
W_{11,3}  & =&  \begin{array}{|c|c|c|c|c|c|c|c|}\hline
  1 &-12 &-17 & 15 & -6 & -9 & 10 & 18 \\\hline
  -2 &  4 &  5 &-11 & 16 &  8 & -7 &-13 \\\hline
                \end{array}\;,\\[8pt]
W_{11,5} & =&  \begin{array}{|c|c|c|c|c|c|c|c|}\hline
   1 &  6 &-15 & -4 & -7 & 10 & -9 & 18\\\hline
  -2 &-14 &  3 &  8 & 17 &-11 & 12 &-13 \\\hline
                \end{array}\;,\\[8pt]
V_1 & = & \begin{array}{|c|c|c|c|c|c|c|c|}\hline
       2 &  8 &  5 & -7 & -4 & 12 & -6 &-10  \\\hline
      -3 &-16 &-17 & 11 & 14 &-13 &  9 & 15 \\\hline
      \end{array}\;,\\[8pt]
  V_3 & = & \begin{array}{|c|c|c|c|c|c|c|c|}\hline
       1 &  6 &-16 & -7 & -5 & -9 & 13 & 17  \\\hline
      -2 &-14 &  4 & 11 & 15 &  8 &-10 &-12 \\\hline
         \end{array}\;,\\[8pt]
         V_5 & = & \begin{array}{|c|c|c|c|c|c|c|c|}\hline
       1 &-11 &  4 & -8 & 17 & 13 & -6 &-10 \\\hline
      -2 &  3 &-16 & 12 & -7 &-14 &  9 & 15  \\\hline
         \end{array}\;,\\[8pt]
V_7 & = & \begin{array}{|c|c|c|c|c|c|c|c|}\hline
       1 &-17 &-16 & -6 & 15 & 12 & 14 & -3  \\\hline
      -2 &  9 &  4 & 10 & -5 &-13 &-11 &  8  \\\hline
         \end{array}\;,\\[8pt]
V_9 & = & \begin{array}{|c|c|c|c|c|c|c|c|}\hline
       1 &-11 &  5 & 12 & -6 & 13 & -4 &-10  \\\hline
      -2 &  3 &-17 & -8 & 16 &-14 &  7 & 15  \\\hline
         \end{array}\;,\\[8pt]
V_{11} & = & \begin{array}{|c|c|c|c|c|c|c|c|}\hline
       1 &  5 &-16 & -6 & -7 & 14 & 12 & -3  \\\hline
      -2 &-13 &  4 & 10 & 17 &-15 & -9 &  8  \\\hline
         \end{array}\;,\\[8pt]
V_{13} & = & \begin{array}{|c|c|c|c|c|c|c|c|}\hline
       1 &  9 &-15 & 16 & -4 &  6 & -8 & -5  \\\hline
      -2 &-17 &  3 &-12 & 14 & -7 & 11 & 10  \\\hline
         \end{array}\;,\\[8pt]
V_{15} & = & \begin{array}{|c|c|c|c|c|c|c|c|}\hline
       1 &  9 &  4 & 12 & -3 & -7 &-11 & -5  \\\hline
      -2 &-17 &-16 & -8 & 13 &  6 & 14 & 10 \\\hline
         \end{array}\;,\\[8pt]
V_{17} & = & \begin{array}{|c|c|c|c|c|c|c|c|}\hline
       1 &  7 &  4 & -6 & -3 & 11 & -5 & -9  \\\hline
      -2 &-15 &-16 & 10 & 13 &-12 &  8 & 14  \\\hline
        \end{array}\;.
\end{array}$$
\end{footnotesize}
\caption{Shiftable blocks of size $2\times 8$ satisfying condition \eqref{blocchiOLD}.}\label{bl8-2}
\end{figure}

\begin{figure}[hpt]
\begin{footnotesize}
$$\begin{array}{rcl}
F_{3} & =& \begin{array}{|c|c|c|c|c|c|c|c|c|c|}\hline
   1 &  4 & -7 &-13 &-10 & 28 & 23 & 17 &-14 &-29 \\\hline
  -2 & -5 &  8 & 20 & 16 &-25 &-26 &-19 & 11 & 22 \\\hline
\end{array}\;,\\[8pt]
F_{5} & =& \begin{array}{|c|c|c|c|c|c|c|c|c|c|}\hline
   1 &  -8 & 4 & -6 &23 & 24 & -12 & 14 &-22 &-18  \\\hline
  -2 & 7 &  -3 & 13 & -17 &-21 & 9 &-16 &  19 & 11  \\\hline
\end{array}\;,\\[8pt]
 W_{7,3} & =& \begin{array}{|c|c|c|c|c|c|c|c|c|c|}\hline
      1 & -15 &  -5 &  21 &  18 &  12 & -13 &  -6 &   9 & -22 \\\hline
     -2 &  19 &   4 & -23 & -16 & -14 &  11 &   8 &  -7 &  20 \\\hline
     \end{array}\;,\\[8pt]
 W_{7,5}   & =& \begin{array}{|c|c|c|c|c|c|c|c|c|c|}\hline
      1 & -10 &   3 &  21 &  22 &  11 &  -8 &  -7 & -15 & -18\\\hline
     -2 &  14 &  -4 & -23 & -20 & -13 &   6 &   9 &  17 &  16 \\\hline
     \end{array}\;,\\[8pt]
 W_{7,7}  & =& \begin{array}{|c|c|c|c|c|c|c|c|c|c|}\hline
      1 & -19 & -13 &  20 &  18 &   9 &   8 &  -3 &  -4 & -17 \\\hline
     -2 &  23 &  12 & -22 & -16 & -11 & -10 &   5 &   6 &  15 \\\hline
   \end{array}\;,\\[8pt]
W_{9,5}   & =& \begin{array}{|c|c|c|c|c|c|c|c|c|c|}\hline
      1 &   -3 &  -13 &   20 &   -4 &   16 &    9 &   10 &  -15 &  -21  \\\hline
     -2 &    7 &   12 &  -22 &    6 &  -18 &  -11 &   -8 &   17 &   19  \\\hline
   \end{array}\; ,\\[8pt]
   W_{9,9}    & =& \begin{array}{|c|c|c|c|c|c|c|c|c|c|}\hline
      1 &   -7 &  -17 &  -14 &  -13 &   20 &    8 &    6 &   -3 &   19  \\\hline
     -2 &   11 &   16 &   12 &   15 &  -22 &  -10 &   -4 &    5 &  -21  \\\hline
      \end{array}\;,\\[8pt]
         W_{13,5}    & =& \begin{array}{|c|c|c|c|c|c|c|c|c|c|}\hline
    1 &   -3 &  -17 &  -15 &    6 &  -14 &   20 &   -8 &   11 &   19 \\\hline
       -2 &    7 &   16 &   13 &   -4 &   12 &  -22 &   10 &   -9 &  -21 \\\hline
          \end{array}\;,\\[8pt]
          W_{15,3}   & =& \begin{array}{|c|c|c|c|c|c|c|c|c|c|}\hline
       1 &  -12 &   13 &   20 &   17 &   -6 &    5 &   -9 &   -8 &  -21 \\\hline
       -2 &   16 &  -14 &  -22 &  -15 &    4 &   -7 &   11 &   10 &   19\\\hline
              \end{array}\; ,\\[8pt]
 W_{15 ,5}    & =& \begin{array}{|c|c|c|c|c|c|c|c|c|c|}\hline
        1 &   22 &    3 &   19 &   13 &   -8 &   -9 &  -10 &  -15 &  -16 \\\hline
       -2 &  -18 &   -4 &  -21 &  -11 &    6 &    7 &   12 &   17 &   14 \\\hline
              \end{array}\;,\\[8pt]
 V_1 & =& \begin{array}{|c|c|c|c|c|c|c|c|c|c|}\hline
  2 &-4 &-6 &-12 &9 &11 &  14 &-15 & -18 &19 \\\hline
  -3 &8 &5 &10 &-7 &-13 &  -16 &17 & 20 & -21 \\\hline
   \end{array}\;,\\[8pt]
  V_3 & =& \begin{array}{|c|c|c|c|c|c|c|c|c|c|}\hline
  1 &-9 &11 &-7 &-4 &8 &  14 &-15 & -18 &19 \\\hline
  -2 &13 &-12 &5 &6 &-10 &  -16 &17 & 20 & -21 \\\hline
   \end{array}\;,\\[8pt]
   V_5 & =& \begin{array}{|c|c|c|c|c|c|c|c|c|c|}\hline
   -13 &11 &9 &-8 &-1 &2 &  14 &-15 & -18 &19 \\\hline
   12 &-7 &-10 &6 &3 &-4 &  -16 &17 & 20 & -21 \\\hline
   \end{array}\;,\\[8pt]
V_7 & =& \begin{array}{|c|c|c|c|c|c|c|c|c|c|}\hline
   1 &12 &9 &-5 &-4 &-13 &  14 &-15 & -18 &19 \\\hline
   -2 &-8 &-10 &3 &6 &11 &  -16 &17 & 20 & -21 \\\hline
   \end{array}\;,\\[8pt]
 V_9 & =& \begin{array}{|c|c|c|c|c|c|c|c|c|c|}\hline
   1 &-3 &-5 &6 &13 &-12 &  14 &-15 & -18 &19 \\\hline
   -2 &7 &4 &-8 &-11 &10 &  -16 &17 & 20 & -21 \\\hline
   \end{array}\;,\\[8pt]
V_{11}  & =& \begin{array}{|c|c|c|c|c|c|c|c|c|c|}\hline
   1 &-9 &7 &-6 &-3 &10 &  14 &-15 & -18 &19 \\\hline
   -2 &13 &-8 &4 &5 &-12 &  -16 &17 & 20 & -21 \\\hline
   \end{array}\;,\\[8pt]
V_{13}  & =& \begin{array}{|c|c|c|c|c|c|c|c|c|c|}\hline
   1 &-3 &-5 &6 &12 &-11 &  14 &-15 & -18 &19 \\\hline
   -2 &7 &4 &-8 &-10 &9 &  -16 &17 & 20 & -21 \\\hline
   \end{array}\;,\\[8pt]
V_{15}  & =& \begin{array}{|c|c|c|c|c|c|c|c|c|c|}\hline
       1 &  -17 &   19 &   16 &   13 &    3 &   -6 &   -8 &   -7 &  -14 \\\hline
      -2 &   21 &  -20 &  -18 &  -11 &   -5 &    4 &   10 &    9 &   12  \\\hline
   \end{array}\;,\\[8pt]
V_{17}  & =& \begin{array}{|c|c|c|c|c|c|c|c|c|c|}\hline
   1 &-3 &-5 &6 &12 &-11 &  13 &-14 & -18 &19 \\\hline
   -2 &7 &4 &-8 &-10 &9 &  -15 &16 & 20 & -21 \\\hline
   \end{array}\;,\\[8pt]
V_{19}  & =& \begin{array}{|c|c|c|c|c|c|c|c|c|c|}\hline
       1 &  -17 &    3 &   18 &   16 &   13 &   -7 &   -6 &  -10 &  -11  \\\hline
      -2 &   21 &   -4 &  -20 &  -14 &  -15 &    5 &    8 &   12 &    9  \\\hline
   \end{array}\; ,\\[8pt]
V_{21}   & =& \begin{array}{|c|c|c|c|c|c|c|c|c|c|}\hline
   1 &-3 &-5 &6 &12 &-11 &  13 &-14 & -17 &18 \\\hline
   -2 &7 &4 &-8 &-10 &9 &  -15 &16 & 19 & -20 \\\hline
   \end{array}\;.
            \end{array}$$
\end{footnotesize}
\caption{Shiftable blocks of size $2\times 10$ satisfying condition \eqref{blocchiOLD}.}\label{bl10-2}
\end{figure}

\begin{figure}[hpt]
\begin{footnotesize} 
 $$\begin{array}{rcl}
W_4 & =& \begin{array}{|c|c|c|c|}\hline
       1 &   -(\ell+1) &  -(4\ell+1) &    5\ell+1\\\hline
   -(2\ell+1) &  3\ell+1 &    6\ell+1 & -(7\ell+1)\\\hline
\end{array}\;,\\[8pt]
W_6 & = &
\begin{array}{|c|c|c|c|c|c|}\hline
   1 &     -(2\ell+1) &   -(4\ell+1) &      5\ell+1 &     11\ell+1 &  -(10\ell+1) \\\hline
 -(\ell+1) &      6\ell+1 &      3\ell+1 &   -(7\ell+1) &  -(9\ell+1) &      8\ell+1 \\\hline
\end{array}\;.
\end{array}$$
\end{footnotesize}
\caption{Shiftable blocks satisfying condition \eqref{blocchiOLD}.}\label{blW3}
\end{figure}

\begin{figure}[hpt]
\begin{footnotesize}
   $$\begin{array}{rcl}
W_4 & =& \begin{array}{|c|c|c|c|}\hline
   y+1 &    -((p+1)y+2) &  -((2p+1)y+3) &  (3p+1)y+4\\\hline
 -(2y+1) &    (p+2)y+2 &    (2p+2)y+3 &  -((3p+2)y+4) \\\hline
\end{array}\;,\\[8pt]
W' & = &
\begin{array}{|c|c|c|c|}\hline
      1 &  -(2y+1) &  (3p+2)y+4 & -((3p+1)y+4) \\\hline
 -(y+1) &  py+2 &     -(3py+4)  &    (p+2)y+2 \\\hline
\end{array}\;, \\[8pt]
W'' & = &
\begin{array}{|c|c|}\hline
 -((2p+1)y+3) &  (2p+2)y+3 \\\hline
      2py+3 &  -((p+1)y+2) \\\hline
\end{array}\;, \\[8pt]
W_6 & = &
\begin{array}{|c|c|}\hline
 W' & W''\\\hline
\end{array}\;.
\end{array}$$

\end{footnotesize}
 \caption{Shiftable blocks $W_4,W_6$ satisfying condition \eqref{blocchiOLD}.}\label{blW4}
\end{figure}

\begin{prop}\label{sk2}
Suppose $6\leq s \leq n$, $6\leq k \leq m$, $ms=nk$ and $s,k\equiv 2 \pmod 4$.
If $m$ is even, there exists a shiftable integer $\H_t(m,n;s,k)$ for every
divisor $t$ of $2ms$.
\end{prop}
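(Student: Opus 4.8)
The plan is to imitate the proof of Proposition~\ref{s2}, simply feeding the sequence produced by Lemma~\ref{G_rho-6-OLD} (rather than Corollary~\ref{seqB}) into the $P$-construction of page~\pageref{bloccoP}. First I would dispose of the parities. Since $m$ is even and $s,k\equiv 2\pmod 4$, from $ms=nk$ we get $ms\equiv 0\pmod 4$, forcing $n$ even, so that by Proposition~\ref{prop:necc} there is no obstruction and only a construction is needed. Writing $s=2s'$, $k=2k'$ with $s',k'$ odd, set $d=\gcd(\tfrac m2,n)$, $\tfrac m2=\bar m d$, $n=\bar n d$ and $a=\tfrac{sd}{n}$. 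The relation $2\bar m s=\bar n k$ combined with $\gcd(\bar m,\bar n)=1$ then yields $\bar m$ odd, $\bar n\equiv 2\pmod 4$ and $a=\tfrac{s}{\bar n}$ \emph{odd}; this oddness of $a$ is precisely the feature that distinguishes the present case from Proposition~\ref{s2}, where $a$ was even.

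Next I would invoke Lemma~\ref{G_rho-6-OLD} to obtain a sequence $\B'=(B_1,\dots,B_{m/2})$ of $\tfrac m2$ shiftable blocks of size $2\times s$ satisfying \eqref{blocchiOLD}, all with the same column-sum vector $\vec\sigma=(\sigma_1,\dots,\sigma_s)$ and with $\supp(\B')=\left[1,ms+\lfloor t/2\rfloor\right]\setminus\{j\ell:j\in[1,\lfloor t/2\rfloor]\}$, which is exactly the support demanded of an integer $\H_t(m,n;s,k)$. Partitioning the columns of each $B_h$ into $\bar n$ pieces $T_1(B_h),\dots,T_{\bar n}(B_h)$ of width $a$ and forming the $\bar m\times\bar n$ meta-array whose $(i,j)$ cell is the block $P\!\left(T_j(B_{(i-1)d+1}),\dots,T_j(B_{id})\right)$, one reads off that each row of the resulting $m\times n$ array has $a\bar n=s$ filled cells and each column has $2a\bar m=k$ filled cells. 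The row sums vanish because the pieces placed in meta-columns $1,\dots,\bar n$ reassemble the complete rows of $B_h$, which are zero by \eqref{blocchiOLD}; the support is inherited verbatim from $\B'$; and since every $B_h$ is shiftable the whole array stays shiftable.

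The delicate point is the vanishing of the column sums, and this is where \eqref{blocchiOLD} behaves differently from \eqref{blocchi}. In Proposition~\ref{s2} each piece $T_j$ inherits the local pairing $\gamma_{2i-1}=-\gamma_{2i}$, so any consecutive even-width piece satisfies \eqref{blocchi2} and the column of the corresponding $P$-block telescopes to $0$. Under \eqref{blocchiOLD} only the two global identities $\sum_i\sigma_{2i-1}=\sum_i\sigma_{2i}=0$ are available, so a piece of the odd width $a$ cut along consecutive columns need not be balanced. The column of $P\!\left(T_j(B_{(i-1)d+1}),\dots,T_j(B_{id})\right)$ equals $\sum_{c\in T_j}\sigma_c$, so the whole construction hinges on choosing the pieces so that each $T_j(B_h)$ has total column sum $0$. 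I would secure this by exploiting the internal structure of the blocks of Lemma~\ref{G_rho-6-OLD}: each is a juxtaposition of the small blocks of Figures~\ref{bl6-2},~\ref{bl8-2},~\ref{bl10-2}, and every such small block \emph{itself} satisfies \eqref{blocchiOLD}, so it splits into two zero-sum halves (its odd-indexed and its even-indexed columns) of odd widths $3$ and $5$ (and even width $4$). Regrouping these zero-sum half-pieces into $\bar n$ bundles of the common width $a$ then gives pieces $T_j$ with $\sum_{c\in T_j}\sigma_c=0$, and the meta-array has all column sums equal to $0$.

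The main obstacle, therefore, is not the row balance, the support, or the cell counts, but precisely this regrouping of the half-pieces into uniform zero-sum pieces of the prescribed odd width $a$: one must match the available half-widths $\{3,4,5\}$ (and the whole-block widths $\{6,8,10\}$) to $a$, which is straightforward when $a$ is moderately large but fails for the smallest values. In particular $a=1$ forces a completely filled array (for instance $\H_t(10,6;6,10)$, where $k=m$ and $s=n$), a configuration the meta-array cannot reach and which neither orientation removes; such degenerate cases I would treat separately by a direct cyclic placement of full blocks in the spirit of the $d=n$ tiling, checking by hand that the odd/even balance of $\vec\sigma$ still cancels the column sums. Verifying that these residual, densely filled configurations are covered is where I expect the bulk of the remaining effort to lie.
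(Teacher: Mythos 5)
Your diagnosis of the central difficulty is exactly right: since $a=s/\bar n$ is odd here, the pieces $T_j$ fed into the $P$-construction are no longer unions of the consecutive pairs $(2i-1,2i)$ whose column sums cancel, so condition \eqref{blocchiOLD} alone does not make the columns of $P$ vanish. But your repair does not close the gap. First, the claim that $a=1$ only occurs for completely filled arrays is false: for $(m,n,s,k)=(20,12,6,10)$ one has $d=\gcd(10,12)=2$, $\bar n=6$ and $a=1$, yet each row has only $6$ of $12$ cells filled --- this is precisely the integer $\H_{15}(20,12;6,10)$ displayed in Figure \ref{big2}, so the ``degenerate'' family you defer is not a fringe of full arrays but contains standard parameter sets, and no zero-sum singleton columns exist in the blocks of Lemma \ref{G_rho-6-OLD} (all listed column sums are nonzero), so the regrouping cannot produce width-$1$ pieces at all. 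Second, even for $a>1$ the regrouping can fail: when $s\equiv 2\pmod 6$ the zero-sum ``half-pieces'' available from Figures \ref{bl6-2} and \ref{bl8-2} have widths $3$ and $4$ only, and for instance $\H_t(30,50;50,30)$ gives $\bar n=10$, $a=5$, and $5$ is not expressible as a sum of $3$'s and $4$'s, so no bundling of these atoms into $\bar n$ uniform zero-sum pieces of width $a$ exists. Making this work would require designing new blocks with additional zero-sum column subsets, i.e., essentially redoing Section \ref{s2k2}.

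The paper avoids cutting the blocks of $\B'$ altogether. Assuming $m\geq n$, it places the first $\frac n2$ blocks of $\B'$ \emph{whole} along a step-$2$ diagonal of an $n\times n$ square $A_1$ (block $B'_r$ starting at cell $(2r-1,2r-1)$); each column of $A_1$ then collects exactly the even-indexed (or exactly the odd-indexed) columns of $s/2$ consecutive blocks, so its sum is $\sum_j\sigma_{2j}=0$ or $\sum_j\sigma_{2j-1}=0$, which is precisely what \eqref{blocchiOLD} guarantees. The remaining $\frac{m-n}{2}$ blocks are taken from the sequence $\B$ of Corollary \ref{seqB} (which satisfies the stronger condition \eqref{blocchi}) and are fed into the $P$-construction of Proposition \ref{s2} to build an $(m-n)\times n$ array with column degree $k-s\equiv 0\pmod 4$, where the piece width is again even and the old cancellation applies; stacking $A_1$ over $A_2$ gives the result. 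In other words, the diagonal placement you relegate to a by-hand patch for ``residual'' cases is the actual engine of the proof, and the $P$-construction is only ever used with even piece width.
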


\begin{proof}
Without loss of generality, we may assume $m\geq n$ (and so $s\leq k$).
Let $\B=(B_1,\ldots,B_{\frac{m}{2}})$ and $\B'=(B'_1,\ldots,B'_{\frac{m}{2}})$  be the sequences of blocks of size 
$2\times s$ constructed in
Corollary \ref{seqB} and Lemma \ref{G_rho-6-OLD}, respectively, where $\B$ satisfies \eqref{blocchi}, $\B'$ satisfies 
\eqref{blocchiOLD} and
$\supp(\B)=\supp(\B')=[ 1,ms+\left\lfloor t/2 \right\rfloor]
\setminus \{j\ell : j \in [1, \lfloor t/2 \rfloor ]$ with $\ell=\frac{2ms}{t}+1$.
Set
$$\wt \B= \left(B_{\frac{n}{2}+1},\ldots, B_{\frac{m}{2}} \right) \equad \wt \B'=\left(B_1',\ldots,B'_{\frac{n}{2}} 
\right).$$
Since $\supp(B_i)=\supp(B'_i)$ for all $i\in [1,\frac{m}{2}]$, it follows that
$$\supp(\wt \B')=
\left[1, sn  +\left\lfloor \frac{sn}{\ell-1} \right\rfloor \right]\setminus
\left\{ j\ell : j \in \left[1, \left\lfloor\frac{sn}{\ell-1} \right\rfloor \right]\right\}$$
and
$$\begin{array}{rcl}
\supp(\wt \B) & =&   \supp(\B) \setminus \supp(\wt \B')\\
& = &\left[sn  +\left\lfloor \frac{sn}{\ell-1} \right\rfloor +1,
ms+\left\lfloor \frac{t}{2}\right\rfloor\right] \setminus 
\left\{j\ell: j \in \left[  \left\lfloor\frac{sn}{\ell-1} \right\rfloor + 1,
\left\lfloor \frac{t}{2}\right\rfloor\right]\right\}.
\end{array}$$
Hence,  $\supp(\wt \B'\con \wt \B)=[ 1,ms+\left\lfloor t/2 \right\rfloor]
\setminus \{j\ell : j \in [1, \lfloor t/2 \rfloor ]$.

Using the blocks of the sequence $\wt \B'$, we first construct a square shiftable p.f. array $A_1$ of size $n$
such that each row and each column contains $s$ filled cells and such that the elements in every row and column sum to 
zero. Hence, take an empty array  $A_1$ of size $n\times n$ and arrange the $\frac{n}{2}$ blocks $B'_r=(b_{i,j}^{(r)})$ 
of $\wt \B'$ in 
such a way that the element $b_{1,1}^{(r)}$ fills the cell $(2r-1,2r-1)$ of $A_1$.
This process makes $A_1$ a p.f. array with $s$ filled cells in each row and in each column.
Since the rows of the blocks $B_r'$ sum to zero, also the rows of $A_1$ sum to zero.
Looking at the columns, the $s$ elements of a column of $A_1$ are
$$b_{1,s}^{(r)}, b_{2,s}^{(r)},\; b_{1,s-2}^{(r+1)}, b_{2,s-2}^{(r+1)},\; 
b_{1,s-4}^{(r+2)}, b_{2,s-4}^{(r+2)}, \;\ldots,\;  b_{1,2}^{(r+s/2)}, b_{2,2}^{(r+s/2)}$$
or
$$b_{1,s-1}^{(r)}, b_{2,s-1}^{(r)},\; b_{1,s-3}^{(r+1)}, b_{2,s-3}^{(r+1)},\; 
b_{1,s-5}^{(r+2)}, b_{2,s-5}^{(r+2)}, \;\ldots,\;  b_{1,1}^{(r+s/2)}, b_{2,1}^{(r+s/2)},$$
where the exponents $r,\ldots,r+s/2$ must be read modulo $\frac{n}{2}$.
Since $\wt \B'$ satisfies condition \eqref{blocchiOLD}, the sum of these elements is
$$\sum_{j=1}^{s/2}\sigma_{2j}=0 \quad \textrm{ or }\quad  \sum_{j=1}^{s/2}\sigma_{2j-1}=0, \quad \textrm{ respectively.}$$
By construction, $\supp(A_1)=\supp(\wt \B')$.

Now, if $m=n$, then $A_1$ is actually a shiftable integer $\H_t(m,n;k,s)$.
Suppose that $m>n$. If we arrange the blocks of the sequence $\wt \B$
mimicking what we did for the construction of an integer $\H_1(m-n, n; s, k-s)$  in
the proof of Proposition \ref{s2},
we obtain a shiftable p.f. array $A_2$ of size $(m-n)\times n$ such that
$\supp(A_2)=\wt \B$, rows and columns sum to zero, each row contains $s$ filled cells and each column  contains $k-s$ filled cells.
Let $A$ be the p.f. array of size $m\times n$ obtained taking 
$$A=\begin{array}{|c|}\hline
   A_1\\\hline
   A_2 \\\hline
   \end{array}.$$
Each row of  $A$ contains $s$ filled cells and each of its columns contains $s+(k-s)=k$ filled cells.
Since  $\supp(\wt \B'\con \wt \B)=[ 1,ms+\left\lfloor t/2 \right\rfloor]
\setminus \{j\ell : j \in [1, \lfloor t/2 \rfloor ]$,  $A$ is a shiftable integer $\H_t(m,n;s,k)$.
\end{proof}

An integer  $\H_{32}(16;14)$   and an integer $\H_{15}(20,12; 6, 10)$ are shown  in Figure \ref{big2}.

\begin{figure}
\rotatebox{90}{
\begin{footnotesize}
$\begin{array}{|c|c|c|c|c|c|c|c|c|c|c|c|c|c|c|c|}\hline
      1&    -3&    -5&     6&    12&   -11&   103&   108&  -118&  -109&  -107&  -111&   115&   119&    &     \\ \hline
      -2&     7&     4&    -8&   -10&     9&  -104&  -116&   106&   113&   117&   110&  -112&  -114&    &     \\ \hline
      &   &   13&   -21&    23&   -19&   -16&    20&   121&   129&   124&   132&  -123&  -127&  -131&  -125 \\ \hline
      &   &  -14&    25&   -24&    17&    18&   -22&  -122&  -137&  -136&  -128&   133&   126&   134&   130 \\ \hline
    -145&  -142&    &   &  -38&    36&    34&   -33&   -26&    27&   138&   146&  -152&   153&  -141&   143 \\ \hline
     148&   147&    &   &   37&   -32&   -35&    31&    28&   -29&  -139&  -154&   140&  -149&   151&  -144 \\ \hline
    -161&   168&   166&  -157&    &   &   39&    50&    47&   -43&   -42&   -51&   155&   159&  -170&  -160 \\ \hline
     171&  -169&  -163&   162&    &   &  -40&   -46&   -48&    41&    44&    49&  -156&  -167&   158&   164 \\ \hline
     176&   183&  -177&   184&  -175&  -181&    &   &   52&   -54&   -56&    57&    64&   -63&   172&  -182 \\ \hline
    -188&  -179&   187&  -185&   178&   186&    &   &  -53&    58&    55&   -59&   -62&    61&  -173&   174 \\ \hline
     189&  -205&  -204&  -194&   203&   200&   202&  -191&    &   &   65&   -73&    71&   -70&   -67&    74 \\ \hline
    -190&   197&   192&   198&  -193&  -201&  -199&   196&    &   &  -66&    77&   -72&    68&    69&   -76 \\ \hline
      89&   -88&   206&  -216&   209&  -213&   222&   218&  -211&  -215&    &   &   78&   -80&   -82&    83 \\ \hline
     -87&    86&  -207&   208&  -221&   217&  -212&  -219&   214&   220&    &   &  -79&    84&    81&   -85 \\ \hline
     -95&    96&   102&  -101&   223&   228&  -238&  -229&  -227&  -231&   235&   239&    &   &   91&   -93 \\ \hline
      94&   -98&  -100&    99&  -224&  -236&   226&   233&   237&   230&  -232&  -234&    &   &  -92&    97 \\ \hline
\end{array}$
\end{footnotesize}}
\quad 
\rotatebox{90}{
\begin{footnotesize}
$\begin{array}{|c|c|c|c|c| c|c|c|c|c| c|c|}\hline
      1&    -3&    -5&     6&    12&   -11&    &   &   &   &   &     \\ \hline
      -2&     7&     4&    -8&   -10&     9&    &   &   &   &   &     \\ \hline
      &   &  -25&    23&    21&   -20&   -13&    14&    &   &   &     \\ \hline
      &   &   24&   -19&   -22&    18&    15&   -16&    &   &   &     \\ \hline
      &   &   &   &   26&   -28&   -30&    31&    38&   -37&    &     \\ \hline
      &   &   &   &  -27&    32&    29&   -33&   -36&    35&    &     \\ \hline
      &   &   &   &   &   &   39&   -41&   -43&    44&    50&   -49 \\ \hline
      &   &   &   &   &   &  -40&    45&    42&   -46&   -48&    47 \\ \hline
      63&   -62&    &   &   &   &   &   &   52&   -54&   -56&    57 \\ \hline
     -61&    60&    &   &   &   &   &   &  -53&    58&    55&   -59 \\ \hline
      72&   -71&   -64&    65&    &   &   &   &   &   &  -76&    74 \\ \hline
     -73&    69&    66&   -67&    &   &   &   &   &   &   75&   -70 \\ \hline
      77&   -78&    &   &   81&   -82&    &   &  -86&    88&    &     \\ \hline
      &   90&   -91&    &   &   94&   -95&    &   &  -98&   100&      \\ \hline
      &   &  103&  -104&    &   &  107&  -108&    &   & -111&   113 \\ \hline
    -116&    &   &  115&  -121&    &   &  120&   126&    &   & -124 \\ \hline
     -79&    80&    &   &  -83&    84&    &   &   87&   -89&    &     \\ \hline
      &  -92&    93&    &   &  -96&    97&    &   &   99&  -101&      \\ \hline
      &   & -105&   106&    &   & -109&   110&    &   &  112&  -114 \\ \hline
     118&    &   & -117&   123&    &   & -122&  -127&    &   &  125 \\ \hline
\end{array}$
\end{footnotesize}} 
\caption{An integer  $\H_{32}(16;14)$  (on the left) and an  integer $\H_{15}(20,12;$ $6, 10)$ (on the right).} 
\label{big2}
\end{figure}

\section{Conclusions}

As the reader could observe, our constructions are all obtained taking basic blocks and arranging
these blocks in some order. In all cases, changing the order of the blocks but keeping the same skeleton,
one can obtain a different $\H_t(m,n;s,k)$ for a fixed choice of $(m,n,s,k,t)$.
Hence, for each case we actually produced at least $\left\lfloor \frac{m}{2}\right\rfloor!$
different integer $\H_t(m,n;s,k)$ with the same skeleton.
Our procedures can be easily implemented in a computer: programs for GAP are available upon request writing to
the second author.
We also point out that the Heffter arrays $\H_t(n;k)$ given here for $t=1,2,k$ are
actually different from the arrays
obtained in \cite{ADDY,RelH}.

As remarked in the introduction, (simple) relative Heffter arrays $\H_t(m,n;s,k)$ can be used for exhibiting pairs of
orthogonal cyclic  decompositions of the complete multipartite graph $K_{\frac{2ms+t}{t}\times t}$, where
one decomposition consists of $s$-cycles and the other one consists of $k$-cycles.
The reader  interested in this type of problems can find full details in \cite{RelH}.

\end{document}